\theoremstyle{plain}
\newtheorem*{bigtheo}{Theorem}
\newtheorem{theo}{Theorem}[subsection]
\newtheorem{prop}[theo]{Proposition}
\newtheorem{lemm}[theo]{Lemma}
\newtheorem{coro}[theo]{Corollary}
\newtheorem{hypo}[theo]{Hypothesis}
\theoremstyle{definition}
\newtheorem{defi}[theo]{Definition}
\theoremstyle{remark}
\newtheorem{rema}[theo]{Remark}
\newcommand{\T}{\mathcal{T}}
\newcommand{\A}{\mathcal{A}}
\newcommand{\B}{\mathcal{B}}
\newcommand{\C}{\mathcal{C}}
\newcommand{\E}{\mathcal{E}}
\newcommand{\mF}{\mathbb{F}}
\newcommand{\Q}{\mathbb{Q}}
\newcommand{\F}{\mathcal{F}}
\newcommand{\G}{\mathcal{G}}
\newcommand{\Li}{\mathcal{L}}
\newcommand{\wF}{\widetilde{\F}}
\newcommand{\wG}{\widetilde{\G}}
\newcommand{\dd}{\delta}
\DeclareMathOperator{\Ker}{Ker}
\DeclareMathOperator{\I}{Im}
\DeclareMathOperator{\Spec}{Spec}
\title{Duality, refined partial Hasse invariants and the canonical filtration}
\author{St\'ephane Bijakowski}
\address{Imperial College,
Department of Mathematics,
180 Queen's Gate,
London SW7 2AZ,
UK}
\email{s.bijakowski@imperial.ac.uk}
\keywords{$p$-divisible group, Hasse invariant, duality, $\mu$-ordinary locus, canonical filtration}
\subjclass[2010]{14L05 (primary), 11F55 (secondary)}
\begin{document}

\begin{abstract}
Let $G$ be a $p$-divisible group over the ring of integers of $\mathbb{C}_p$, and assume that it is endowed with an action of the ring of integers of a finite unramified extension $F$ of $\mathbb{Q}_p$. Let us fix the type $\mu$ of this action on the sheaf of differentials $\omega_G$. V. Hernandez, following a construction of Goldring and Nicole, defined partial Hasse invariants for $G$. The product of these invariants is the $\mu$-ordinary Hasse invariant, and it is non-zero if and only if the $p$-divisible group is $\mu$-ordinary (i.e. the Newton polygon is minimal given the type of the action). \\
We show that if the valuation of the $\mu$-ordinary Hasse invariant is small enough, then each of these partial Hasse invariants is a product of other sections, the refined partial Hasse invariants. We also give a condition for the construction of these invariants over an arbitrary scheme of characteristic $p$. We then give a simple, natural and elegant proof of the compatibility with duality for the classical Hasse invariant, and show how to adapt it to the case of the refined partial Hasse invariants. Finally, we show how these invariants allow us to compute the partial degrees of the canonical filtration (if it exists).
\end{abstract}

\maketitle

\tableofcontents

\section*{Introduction}

Let $E$ be an elliptic curve over an algebraically closed field $k$ of characteristic $p$. There are two possibilities for the number of $k$-points of the $p$-torsion of $E$ : it is either $p$ or $1$. In the first case, we say that the elliptic curve is ordinary; otherwise we say that it is supersingular. This condition can also be seen on the group structure of the $p$-torsion of $E$ : if it is a product of a multiplicative group and an \'etale one, then the elliptic curve is ordinary. An equivalent condition is the fact that the Eisenstein series $E_{p-1}$ is non-zero at $E$ (if $p \geq 5$). \\
\indent More generally, if $G$ is a $p$-divisible group over $k$, we say that $G$ is ordinary if its $p$-torsion is the product of a multiplicative part by an \'etale part. One can associate to $G$ several invariants. The first one is the Newton polygon, and $G$ is ordinary if and only if this polygon has slopes $0$ and $1$. The second one is the Hasse invariant $ha(G)$; it is a section of the sheaf $(\det \omega_G)^{p-1}$, where $\omega_G$ is the sheaf of differentials of $G$. The section $ha(G)$ is induced by the map $V : \omega_G \to \omega_G^{(p)}$, where $V$ is the Verschiebung, and the superscript denotes a twist by the Frobenius. Then $G$ is ordinary if and only if the Hasse invariant is non-zero. \\
\indent Assume now that $F$ is an unramified extension of degree $f$ of $\mathbb{Q}_p$, and that $G$ has an action of $O_F$, the ring of integers of $F$. The sheaf $\omega_G$ thus decomposes into $\omega_G = \oplus_{i=1}^f \omega_{G,i}$, where $\omega_{G,i}$ is the subsheaf of $\omega_G$ where $O_F$ acts by $\sigma^i$, where $\sigma$ is the Frobenius. Let $d_i$ be the dimension of $\omega_{G,i}$ for all $1 \leq i \leq f$. If there exists an integer $d$ with $d_i = d$ for all $d$, then the Hasse invariant is the product of partial Hasse invariants $ha_i(G)$. The element $ha_i(G)$ is a section of the invertible sheaf $(\det \omega_{G,i-1})^p (\det \omega_{G,i})^{-1}$, and is induced by the Verschiebung, which decomposes into $V_i : \omega_{G,i} \to \omega_{G,i-1}^{(p)}$. We will refer to this case as the ordinary case. \\

The general case is more involved. Indeed, if the previous hypothesis is not satisfied, the Hasse invariant $ha(G)$ is always $0$ (because at least one of the $V_i$ can never be an isomorphism). There is then an obstruction for the $p$-divisible group $G$ to be ordinary. Looking at the Newton polygon of $G$, one can see that this polygon lies always above a certain polygon depending on the collection of integers $\mu :=(d_i)_i$, the Hodge polygon. If it is an equality, one say that the $p$-divisible group is $\mu$-ordinary. It roughly states that the structure of the $p$-divisible group is the best possible given the constraints of the action of the ring $O_F$. The construction of a Hasse invariant in this situation, i.e. a section of an invertible sheaf such that its non-vanishing is equivalent to the fact that the $p$-divisible group is $\mu$-ordinary, has been initiated by Goldring and Nicole (\cite{GN}). Actually, they construct this invariant over a Shimura variety. A local construction has then been done by Hernandez (\cite{He_ha}). \\
\indent Let us recall the main idea of this construction. The iterated of the Verschiebung $V^f$ induces a map $\omega_{G,i} \to \omega_{G,i}^{(p^f)}$, for all $1 \leq i \leq f$. If $d_i$ is the minimum of the $(d_j)_j$, then there is no obstruction for this map to be an isomorphism, and taking the determinant gives a section $Ha_i(G)$ of $(\det \omega_{G,i})^{p^f-1}$. If it is not the case, the determinant of this map is always $0$. However, using crystalline cohomology, one can lift this map to a ring where $p$ is not a zero divisor. Then one can show that the determinant of the map is divisible by an explicit power of $p$. Making the adequate division gives a section $Ha_i(G)$ of $(\det \omega_{G,i})^{p^f-1}$ for any integer $i$. The product of these sections is then the $\mu$-ordinary Hasse invariant. This construction is actually valid over an arbitrary scheme of characteristic $p$, not just the spectrum of an algebraically closed field. Note that the construction in \cite{GN} is valid over the special fiber of Shimura varieties of PEL type, and that Koskivirta and Wedhorn (\cite{KW}) constructed $\mu$-ordinary Hasse invariants for Shimura varieties of Hodge type.\\

In the ordinary case, the section $Ha_i(G)$ is equal to a product of powers of the sections $ha_j(G)$, $1 \leq j \leq f$. This suggests that the situation is not optimal, and that one should be able to define analogues of the $ha_j(G)$ in general. This is indeed possible if one assumes moreover the existence of a certain filtration on the (contravariant) Dieudonn\'e crystal $\E$ of $G$ evaluated at $k$. Recall that $\E$ is a $k$-vector space of dimension the height of $G$, and that it decomposes into $\E = \oplus_{i=1}^f \E_i$.

\begin{bigtheo}
Let $S$ be a scheme of characteristic $p$, and let $G$ be a $p$-divisible group over $S$. Assume that $G$ has an action of $O_F$, and fix the type of this action. Let $\E$ be the Dieudonn\'e crystal of $G$ evaluated at $S$, and assume the existence of adequate filtrations on the sheaves $\E_i$. Then there exist sections $h_k^{[i]} (G)$ of invertible sheaves for $1 \leq k \leq f$ and $1 \leq i \leq f$, such that
$$Ha_i (G) = h_{i}^{[i]} \cdot (h_{i-1}^{[i]})^p \cdot \dots \cdot (h_{i+1}^{[i]})^{p^{f-1}} \text{.}$$
\end{bigtheo}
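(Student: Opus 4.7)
The plan is to mirror Hernandez's construction of the $\mu$-ordinary partial Hasse invariant $Ha_i(G)$ but refine it step by step, breaking the iterated Verschiebung $V^f$ into its $f$ atomic factors and using the assumed filtration on each $\E_j$ to record their individual contributions. Recall that $Ha_i(G)$ is produced by lifting $V^f$ acting on $\E_i$ to a flat base on which $p$ is not a zero divisor, taking the determinant of the induced map on a lift of $\omega_{G,i}$, and dividing by the power of $p$ dictated by the Hodge polygon of $\E_i$. Writing $V^f = V_{i+1}\circ\cdots\circ V_{i-1}\circ V_i$, I want to perform this lift/determinant/divide procedure one factor at a time.

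First I would isolate a single factor $V_j : \E_j \to \E_{j-1}^{(p)}$. The hypothesis makes $V_j$ into a filtered morphism, with the filtration on $\E_{j-1}^{(p)}$ obtained by twisting the one on $\E_{j-1}$. After a crystalline lift to characteristic zero, the determinant of the restriction of this lift to a specific graded piece is, by an argument analogous to the one in \cite{He_ha} for the total $V^f$, divisible by a precise power of $p$; the quotient defines the candidate refined section $h_k^{[i]}(G)$, naturally a section of an invertible sheaf built from determinants of graded pieces of the filtrations on $\omega_{G,j}$ and $\omega_{G,j-1}$. Independence of the choice of lift and extension from points to an arbitrary base $S$ of characteristic $p$ follow from standard crystalline arguments, exactly as for $Ha_i(G)$ itself.

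Next I would iterate this procedure along the cycle $i \mapsto i-1 \mapsto \cdots \mapsto i+1 \mapsto i$, keeping track of Frobenius twists: because the $k$-th application of $V$ in the chain is post-composed with the previous $(k-1)$ Frobenius twists, its determinant contributes the $p^{k-1}$-th power of a refined section, which after re-indexing reads $(h_{i-k+1}^{[i]})^{p^{k-1}}$. The total valuation of $p$ accumulated over the $f$ steps should equal precisely the Hodge-polygon exponent divided out in forming $Ha_i(G)$, so that the division produces exactly
$$Ha_i(G) = h_i^{[i]} \cdot (h_{i-1}^{[i]})^p \cdots (h_{i+1}^{[i]})^{p^{f-1}}.$$
The identity of sections is then a direct consequence of the multiplicativity of determinants applied to the crystalline lift of the composition $V^f = V_{i+1}\circ\cdots\circ V_i$.

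The main obstacle will be the combinatorial bookkeeping at the interfaces: identifying, at each step $j$ of the cycle, the correct graded subquotient of the filtration from which to extract the refined section $h_j^{[i]}(G)$; checking that the individual $p$-adic valuations produced at each step sum to the global Hodge exponent of $Ha_i(G)$; and verifying that the invertible sheaves housing the $h_k^{[i]}(G)$ assemble, under the accumulated $p$-power Frobenius twists, into $(\det \omega_{G,i})^{p^f-1}$. Once this accounting is set up, the factorisation is automatic; the construction's merit lies precisely in that the refined sections are defined on the nose, not merely up to a unit.
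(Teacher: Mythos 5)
Your proposal gets the overall architecture right -- one refined section per Verschiebung factor in the chain $\E_i \to \E_{i-1}^{(p)} \to \dots \to \E_i^{(p^f)}$, with Frobenius twists accumulating the powers $p^{k-1}$ -- but the step that actually constructs each $h_k^{[i]}$ is both gapped and misdirected. You propose to define $h_k^{[i]}$ by lifting a single factor $V_k$ crystallinely, taking a determinant ``on a specific graded piece,'' and dividing by a power of $p$. You never say which graded piece, and this is not a detail: the restriction $V_k : \F_k \to \F_{k-1}^{(p)}$ is a map between sheaves of ranks $d_k$ and $d_{k-1}$, which differ in general, so there is no determinant to take until the filtrations of Hypothesis \ref{fil} have been used to cut source and target down to the same rank $\dd_{s(i)}$. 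Moreover, the maps between those graded pieces (e.g.\ $\F_k/\F_k^{[j]} \to \E_k/\wF_k^{[j]}$) are only well defined because of Hypothesis \ref{adequate}, the compatibility $\F_k^{[j]} \subset \wF_k^{[j]}$ (resp.\ $\supset$), which your argument never invokes; without it the chain does not respect the filtrations and does not compose. Once the correct graded pieces are identified, the paper's point (Definition \ref{cons}) is that no lifting and no division by $p$ is needed at all: $h_k^{[j]}$ is the determinant of a natural inclusion-followed-by-projection map between locally free sheaves of equal rank, entirely in characteristic $p$. Your lift-and-divide step is therefore not only unverified (the exact divisibility and independence of the lift are asserted, not proved) but reintroduces machinery the hypothesis was designed to eliminate.

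The second gap is the final ``multiplicativity of determinants'' step. The paper's proof of Theorem \ref{link} does not simply multiply determinants of a composite: it uses the integral construction of $Ha_i(G)$ from \cite{Bi_He} via the maps $f_k^d$ on top exterior powers (where the division by $p$ is replaced by the formula $F_k y \mapsto y$ on the $\wF_k$-part), and then the real work is to show that $f_k^{\dd_{s(i)}}$, restricted to the relevant line, coincides with $h_{k-1}^{[s(i)]}$ composed with the explicit isomorphism $\det(\E_k/\wF_k^{[s(i)]}) \simeq (\Li_{k-1}^{[s(i)]})^p$ of Proposition \ref{iso}; this is checked with local bases, separately in the cases $s(i) \leq s(k-1)$ and $s(i) > s(k-1)$ (and in the latter case the target sheaf $\Li_{k-1}^{[s(i)]} = \det(\E_{k-1}/\F_{k-1}^{[s(i)]}) \otimes \det \F_{k-1}$ is not a graded piece of the filtration on $\omega_{G,k-1}$, contrary to what you assert). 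Your last paragraph concedes that this interface bookkeeping is ``the main obstacle''; it is in fact the entire content of the proof, so as written the proposal is an outline rather than an argument.
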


We refer to Hypotheses \ref{fil} and \ref{adequate} for the precise definition of the adequate filtrations. Let us just remark that they are canonically defined if the $p$-divisible group $G$ is defined over a perfect field and is $\mu$-ordinary. Indeed, Moonen (\cite{Mo}) showed that in that case $G$ admits a canonical filtration by sub-$p$-divisible groups, giving the desired filtrations on the spaces $\E_i$. If one considers a $p$-divisible group $G$ defined over $O_{\mathbb{C}_p} / p$, close to being $\mu$-ordinary (in the sense that the valuation of the $\mu$-ordinary Hasse invariant is small enough), then it is reasonable to expect that the $p$-torsion of $G$ admits a canonical filtration by finite flat subgroups. There would then exist a filtration on each space $\E_i$ refining the Hodge filtration. The definition of the adequate filtrations is made so that it is compatible with these previous filtrations (possibly after reduction modulo a fractional power of $p$). \\
Let us just make explicit the case $f=2$. The Hodge filtration give subsheaves $\F_1 \subset \E_1$ and $\F_2 \subset \E_2$, locally free of rank respectively $d_1$ and $d_2$, and assume for example that $d_1 < d_2$. The existence of adequate filtrations on $\E_1$ and $\E_2$ amounts to the existence of $\F_1 \subset \F_1^{[2]} \subset \E_1$ and $\F_2^{[1]} \subset \F_2$ such that $\F_1^{[2]}$ is locally a direct factor of rank $h+d_1-d_2$ containing the intersection of $\E_1$ with the image of the Frobenius $F$, and $\F_2^{[1]}$ is locally a direct factor of rank $d_2 - d_1$ included in the kernel of the Verschiebung $V$. The sections $h_1^{[1]}$ and $h_2^{[1]}$ are then induced respectively by the determinant of the maps 
$$V : \F_1 \to (\F_2 / \F_2^{[1]})^{(p)} \qquad V : \F_2 / \F_2^{[1]} \to \F_1^{(p)} \text{.}$$
The sections $h_1^{[2]}$ and $h_2^{[2]}$ are then induced respectively by the determinant of the maps
$$F : (\E_2 / \F_2)^{(p)} \to  \F_1^{[2]} / \F_1 \qquad F : (\F_1^{[2]} / \F_1)^{(p)} \to \E_2 / \F_2 \text{.}$$
$ $

If one considers the usual special fiber of a Shimura variety of type (A), then the existence of adequate filtrations is in general not satisfied. Indeed, Hernandez proved in \cite{He_ha} that the elements $Ha_i (G)$ are irreducible in the generic case. But if one considers a certain closed subscheme of a flag variety (see \cite{EV}), then the hypotheses are satisfied and the sections $Ha_i(G)$ are no longer irreducible. Note that the full flag variety for some Shimura varieties, stratifications on this space, and the construction of generalized Hasse invariants on strata, have recently been studied by Goldring and Koskivirta in \cite{GK}. \\
\indent We have thus constructed $f^2$ refined partial Hasse invariants. Actually, there may be fewer of them~: the section $h_k^{[i]}$ depends only on $k$ and the integer $d_i$. In the ordinary case, we just get the usual partial Hasse invariants. On the other hand, if the elements $d_i$ are pairwise distinct, there are $f^2$ distinct refined partial Hasse invariants. \\
$ $\\
\indent The duality is a very natural phenomenon for $p$-divisible groups. A $p$-divisible group is ordinary if and only if its dual is. Moreover, the Hasse invariant is compatible with the duality. This result is quite expected, and not surprising at all; therefore one could expect a very simple proof of this result. It has been proved in \cite{Fa}, proposition $2$; in \cite{Co} Th. $2.3.5$, it is proved that the Hasse invariants of $G$ and $G^D$ generate the same ideals. However, both proofs are very little natural. Indeed, they first deal with the case where the ordinary locus is dense, and then use a descent argument to prove the general case. \\
One would expect the construction of the refined partial Hasse invariants to be compatible with duality, as is shown in the example where $f=2$. Indeed, in this case, one uses the Verschiebung for some refined partial Hasse invariants, and the Frobenius for the others. This suggests to look at the duality in more details, and I got the following result.

\begin{bigtheo}
There is a simple and natural proof of the compatibility with duality for the Hasse invariant. Moreover, this proof can be extended to the case of the refined partial Hasse invariants.
\end{bigtheo}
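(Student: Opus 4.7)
The plan is to exploit Cartier duality at the level of the Dieudonn\'e crystal. For a $p$-divisible group $G$ in characteristic $p$, the crystal $\E(G^D)$ is canonically identified with the dual $\E(G)^\vee$, with the roles of the Frobenius and the Verschiebung interchanged and the Hodge filtration $\omega_{G^D}$ corresponding to the annihilator of $\omega_G$; in presence of an $O_F$-action this identification intertwines the isotypic decompositions via an explicit reflection of the indices. I expect the entire proof to rest on pushing this basic fact through the determinantal constructions, with the main work being the bookkeeping of the indices and of the powers of $p$ involved.

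For the classical Hasse invariant $ha(G) = \det(V : \omega_G \to \omega_G^{(p)})$, it suffices to transpose the map: the dual $(\omega_G^{(p)})^\vee \to \omega_G^\vee$ is, under the annihilator identification, precisely the Verschiebung on $\E(G^D)$ (which is the transpose of the Frobenius on $G$), so passing to determinants yields the compatibility with duality through a natural isomorphism of the relevant line bundles, with no density argument needed. The extension to $Ha_i(G)$ follows the same pattern: one lifts the iterated Verschiebung $V^f$ to the crystalline setting, then uses the fundamental relation $F^f V^f = p^f = V^f F^f$ to transfer the $p$-divisions between the $V$-side and the $F$-side and recover the normalization appearing in the definition of $Ha_i(G^D)$ from the type of $G^D$.

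For the refined partial Hasse invariants, the key observation, already visible in the $f=2$ example where two of the four sections are defined from $V$ and the other two from $F$, is that the ``$V$-type'' invariants of $G$ must correspond to the ``$F$-type'' invariants of $G^D$. The adequate filtrations transform contravariantly: a subsheaf of $\E_j$ that is locally a direct factor of the appropriate rank and contains (respectively is contained in) $\omega_{G,j}$ gives, by annihilator, a subsheaf of $\E(G^D)_{j'}$ that is locally a direct factor of the appropriate rank and is contained in (respectively contains) $\omega_{G^D, j'}$; one checks that this is exactly an adequate filtration for $G^D$ in the sense of the theorem. Matching the refined partial Hasse invariants then reduces to matching indices: each $h_k^{[i]}(G)$ corresponds to some $h_{k'}^{[i']}(G^D)$ with $(k', i')$ determined by the type $\mu$ and the involution induced by duality on the $O_F$-action.

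The main obstacle is therefore not conceptual but combinatorial: tracking precisely how the pair $(k,i)$ transforms under duality, and verifying that all the powers of $p$ and all the twists by Frobenius match on both sides of the correspondence. Once this is carried out, the proof is uniform and natural, works over any base of characteristic $p$ (and in fact over any $p$-torsion-free lift via the crystalline formulation), and avoids entirely the density argument for the $\mu$-ordinary locus used in the existing proofs.
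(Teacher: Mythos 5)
Your framework is the right one (identifying $\E(G^D)$ with $\E^\vee$, exchanging $F$ and $V$, and transporting the adequate filtrations by annihilators --- this is exactly how the paper sets things up), but the central step of the paper's argument is missing, and the step you substitute for it does not work. Transposing the Hasse map $V:\omega_G\to\omega_G^{(p)}$ produces the map $(\E^\vee/\F^\bot)^{(p)}\to\E^\vee/\F^\bot$ induced by $V^\vee$, which is the \emph{Frobenius} of $G^D$ acting on the \emph{quotient} $\E^\vee/\omega_{G^D}$; the Hasse map of $G^D$ is the \emph{Verschiebung} $F^\vee$ of $G^D$ restricted to the \emph{subsheaf} $\omega_{G^D}=\F^\bot$. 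These are genuinely different maps between different sheaves, so ``passing to determinants'' after transposition only returns $ha(G)$ itself (in particular it does not even produce the isomorphism $\Li_G^{p-1}\simeq\Li_{G^D}^{p-1}$ of line bundles). The paper bridges the two by first rewriting $ha(G)$ as the determinant of the natural map $\F\to\E/\wF$ and $ha(G^D)^\vee$ as that of $\wF\to\E/\F$, and then invoking an elementary determinant identity (Proposition \ref{dual}): for locally direct factors $\B,\C\subset\A$ of complementary ranks, the determinants of $\C\to\A/\B$ and of $\B\to\A/\C$ coincide under the canonical isomorphism of the relevant line bundles, both being the determinant of $\B\oplus\C\to\A$. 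This sub/quotient swap is the entire content of the ``simple and natural proof,'' and nothing in your sketch supplies it.

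The same gap reappears in the refined case. Your transport of the filtrations by annihilators and the index reflection $h_i^{[j]}(G)\leftrightarrow h_{G^D,i}^{[r+1-j]}$ are correct, and for $j\neq s(i)$ pure transposition does suffice, because there $H_{G^D,i}^{[r+1-j]}$ is literally the dual of $H_i^{[j]}$ and a map and its dual induce the same section. But for the middle index $j=s(i)$ --- precisely the case generalizing the classical Hasse invariant --- the maps $H_i^{[s(i)]}:\F_i\to\E_i/\wF_i^{[s(i)]}$ and $H_{G^D,i}^{[r+1-s(i)]}:\wF_i^{[s(i)]}{}^\bot\to\E_i^\vee/\F_i^\bot$ are related by the sub/quotient swap and not by transposition, so Proposition \ref{dual} is needed once more. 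A secondary point: your appeal to $F^fV^f=p^f$ and a crystalline lift to handle $Ha_i(G)$ is not needed and is not how the paper proceeds; duality for $Ha_i$ falls out of Theorem \ref{link} once the refined invariants are handled, entirely in characteristic $p$.
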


We refer to Theorems $\ref{dual1}$ and $\ref{dual2}$ for the precise statements and results. \\
$ $\\
\indent The Hasse invariant plays a central role in the theory of the canonical subgroup. Indeed, let $K$ be a finite extension of $\mathbb{Q}_p$ with ring of integers $O_K$, and let $G$ be a $p$-divisible group defined over $O_K$. One can look at the Hasse invariant of $G \times_{O_K} O_K / p$, and taking its (truncated) valuation gives a well defined rational between $0$ and $1$. Fargues (\cite{Fa}) proved that if this valuation is small enough (and $p \geq 3$), then there exists a canonical subgroup $C$ in the $p$-torsion of $G$. Moreover, one can relate the degree of $C$ to the Hasse invariant. \\
\indent If the $p$-divisible group has an action of $O_F$, in the ordinary case, there is no obstruction for the $p$-divisible group to be ordinary and to have a canonical subgroup. One can then define the partial degrees $(\deg_i C)_{1 \leq i \leq f}$ of $C$, and relate them to the partial Hasse invariants $ha_i(G)$ (see \cite{Bi_can}). In the general case, Hernandez proved in \cite{He_can} (under some assumptions on $p$) that if the valuation of the $\mu$-ordinary Hasse invariant of $G$ is small enough, then the $p$-torsion of $G$ admits a canonical filtration. There are thus several canonical subgroups $(C_i)$, each of them being of height $f d_i$. Actually, Hernandez proved that if the valuation of $Ha_i(G)$ is small enough, then there exists a canonical subgroup $C_i$ of height $f d_i$. He also relates the valuation of $Ha_i(G)$ to a certain linear combination of the partial degrees of $C_i$, but is unable to compute each of these partial degrees. They are in fact related to the refined partial Hasse invariants.

\begin{bigtheo}
Let $G$ be a $p$-divisible group over $O_K$ with an action of $O_F$, and assume that there exist adequate filtrations for $G \times_{O_K} O_K / p$. Let $1 \leq i \leq f$ be an integer, and assume that there exists a canonical subgroup $C$ of height $f d_i$ (in the strong sense of Definition \ref{defi_can}). Then we have for $1 \leq k \leq f$
$$\deg_k C^D = \max(d_i - d_k,0) + v(h_k^{[i]}) \text{.}$$
\end{bigtheo}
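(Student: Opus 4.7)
The strategy is to identify $\deg_k C^D$ directly as the valuation of a determinant computed from the map defining $h_k^{[i]}$, plus a combinatorial shift $\max(d_i-d_k,0)$ which records the ``generic'' rank of $\omega_{C^D,k}$ when $G$ is $\mu$-ordinary and $C$ is the $\mu$-ordinary canonical subgroup of height $fd_i$.

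First, I would analyse the $O_F$-equivariant structure of $\omega_{C^D}$. Since the partial degrees are by definition valuations of top Fitting ideals of the pieces $\omega_{C^D,k}$, the task reduces to locating $\omega_{C^D,k}$ inside the Hodge filtration of $G^D$ (equivalently, $\omega_{G/C,k}$ inside that of $G$). Using the Dieudonn\'e theory of the canonical subgroup together with the adequate filtrations on the $\E_j$ from Hypothesis~\ref{adequate} and Definition~\ref{defi_can}, the subgroup $C$ of height $fd_i$ corresponds to a specific step of the adequate filtration on each $\E_j$. This identifies $\omega_{C^D,k}$ with a quotient of a precise piece of the adequate filtration on $\E_k$, whose generic rank is exactly $\max(d_i-d_k,0)$, accounting for the first summand of the formula.

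Second, I would lift the map defining $h_k^{[i]}$ from characteristic $p$ to $O_K$ via crystalline cohomology, as in the construction of $Ha_i(G)$ recalled in the introduction. Comparing this lift with the Hodge filtration of $C^D$ shows that the induced map between the ``generic'' $\omega_{C^D,k}$ and the actual one has determinant equal to $h_k^{[i]}$ up to a unit, so its valuation is $\deg_k C^D - \max(d_i-d_k,0) = v(h_k^{[i]})$. As a sanity check, summing the resulting identities over $k$ with the appropriate Frobenius twists and invoking the factorisation $Ha_i(G) = h_i^{[i]}\cdot(h_{i-1}^{[i]})^p \cdots (h_{i+1}^{[i]})^{p^{f-1}}$ of the first theorem recovers Hernandez's relation between $v(Ha_i(G))$ and the total degree of $C^D$.

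The hard part will be the clean, case-by-case identification (on the sign of $d_i-d_k$) of the step of the adequate filtration that computes $\omega_{C^D,k}$ and of the lift of the map defining $h_k^{[i]}$. When $d_k \geq d_i$ the relevant map is a Verschiebung and the rank shift vanishes, whereas when $d_k < d_i$ one must use the Frobenius side, where the compatibility of the refined invariants with duality (Theorem~\ref{dual2}) enters crucially: applying it to $G^D$ swaps the two situations and permits a uniform treatment, keeping the final statement symmetric in $i$ and $k$.
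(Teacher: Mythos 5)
Your overall shape is right (case division on the sign of $d_i-d_k$, Verschiebung on one side and Frobenius on the other, duality to swap them), but the proposal is missing the two ideas that actually make the paper's proof work, and it substitutes for them a step that the paper deliberately avoids. First, you never explain \emph{how} $\deg_k C^D$ materialises as the valuation of a determinant. In the paper this is Proposition \ref{crys}: one filters $C$ by Raynaud group schemes, computes explicitly that Verschiebung (resp.\ Frobenius) on the rank-one pieces of the Dieudonn\'e crystal of a Raynaud group scheme has determinant of valuation $p\,v(b_{i-1})$ (resp.\ $p\,v(a_{i-1})$), and deduces that $V_{k+1}$ on $\E_{C,k+1,\{1-\alpha\}}$ has determinant of valuation $p\deg_k C^D$. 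Your sentence ``comparing this lift with the Hodge filtration of $C^D$ shows that the induced map \dots has determinant equal to $h_k^{[i]}$ up to a unit'' asserts exactly the conclusion without supplying this mechanism. Second, the subgroup $C$ induces its \emph{own} filtration $\G_{k,\{1-\alpha\}} = \wG_{k,\{1-\alpha\}}\cap\F_{k,\{1-\alpha\}}$ (kernel of $\E_k\to\E_{C,k}$ intersected with the Hodge filtration), and this need not coincide with the adequate filtration $\F_k^{[i]}$ you were handed; the paper bridges the two by the uniqueness argument of Proposition \ref{unique}, which only gives agreement modulo $\mathfrak{m}_{1-2\alpha}$. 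Tracking that loss, together with the inequalities $p\alpha<1-\alpha$ and $\deg_k C^D\le\alpha<1-2\alpha$ forced by the strong bound $\alpha<\tfrac{1}{p+1}$, is precisely where the ``strong'' hypothesis is consumed; your proposal never uses it.

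Finally, the proposed route of ``lifting the map defining $h_k^{[i]}$ to $O_K$ via crystalline cohomology, as in the construction of $Ha_i(G)$'' points in the wrong direction. The division-by-powers-of-$p$ trick is how Goldring--Nicole and Hernandez build $Ha_i$, but the refined invariants $h_k^{[i]}$ of this paper are defined purely in characteristic $p$ from the adequate filtrations, with no division; the paper's proof never leaves the truncated rings $O_{K,\{1-\alpha\}}$, and the whole point of the setup is that all comparisons happen modulo fractional powers of $p$. As written, your second step would require constructing a crystalline lift of the adequate filtration itself, which is neither done nor needed. The summation sanity check recovering Hernandez's relation is fine (the paper makes the same remark after the theorem), but it is a consequence, not an ingredient.
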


We need the existence of adequate filtrations for $G \times_{O_K} O_K / p$, so that the refined partial Hasse invariants over $O_K / p$ can be defined. We prove that such filtrations always exist, and that the valuations of the refined partial Hasse invariants obtained do not depend on any choice, if the valuation of the $\mu$-ordinary Hasse invariant is small enough. \\
The point of view I develop about the canonical filtration is thus different from Hernandez'. Indeed, he proves that if the $\mu$-ordinary Hasse invariant has a valuation small enough, then one can construct explicitly subgroups of $p$-torsion. On the other hand, I give a precise definition of the desired subgroups, and prove properties for these subgroups starting with their definitions. This point of view might seem weaker, but is actually sufficient for applications. Indeed, one can show that on strict neighborhoods of the $\mu$-ordinary locus in some Shimura varieties, canonical filtrations (with my definition) always exist. Note that the understanding of such neighborhoods would be a key step in constructing overconvergent modular forms of any weight for Shimura varieties with empty ordinary locus (see \cite{Bi_mu} for the definition of such overconvergent modular forms of classical weight). \\

\textbf{Relation with works of other authors.} The construction of the $\mu$-ordinary Hasse ivnariant goes back to the work of Goldring and Nicole (\cite{GN}). Their work uses the crystalline cohomology, and is valid for PEL Shimura varieties. Another construction has been done for Shimura varieties of Hodge type by Koskivirta and Wedhorn using the theory of $G$-Zip (\cite{KW}). A purely local construction using crystalline cohomology has then been done by Hernandez in \cite{He_ha}. There has then been works on generalized Hasse invariants associated to stratifications of Shimura varieties. More precisely, one can study the Ekedahl-Oort stratification, and for each open stratum, construct a section on the adherence of it such that its non-zero locus is exactly the open stratum. Such constructions have been done by Boxer (\cite{Bo}) for PEL Shimura varieties and Goldring-Koskivirta for Shimura varieties of Hodge type (\cite{GK_eo}). These results have applications to the construction of Galois representations. Moreover, Goldring and Koskivirta have also studied the stratification in flag spaces, and constructed generalized Hasse invariants in this context (\cite{GK}). \\
These works are then global in nature, since it uses a stratification on some variety. On the other hand, the work in this paper is purely local. The main question I was trying to answer is the following: let $G$ be a $p$-divisible group over the ring of integers of $\mathbb{C}_p$ with an unramified action. What invariants (i.e. reals between $0$ and $1$) can one attach to it ? The papers previously stated only give the partial $\mu$-ordinary Hasse invariants constructed by Goldring-Nicole, Hernandez or Koskivirta-Wedhorn, and this result was not optimal in my opinion. I prove here that one can attach to $G$ some filtrations, which are well defined modulo a certain power of $p$. Using these filtrations, one can then define the refined partial Hasse invariants. To define these objects over an arbitrary scheme of characteristic $p$, one has to assume the existence of adequate filtrations. This condition anounts to look at an explicitly, carefully chosen, closed stratum of a certain flag variety. Actually, once this assumption is made, one could use the work of Goldring and Koskivirta on flag spaces (\cite{GK}) to construct the desired refined partial Hasse invariants. But my construction is really simple (see Definition \ref{cons}), and there is thus no need to use unnecessary complicated arguments. The main point of my construction is to come up with the explicit condition ensuring the existence of the refined partial Hasse invariants, and thus this paper is very little related to \cite{GK}. Moreover, my main application is the canonical filtration, and thus my motivation for this paper is completely different from \cite{GK}.   \\

Let us now talk about the organization of the paper. In the first section, we define adequate filtrations and the refined partial Hasse invariants. We also relate them to the invariants constructed by Hernandez. In the second section, we give a simple and natural proof of the compatibility with duality for the Hasse invariant, and prove this compatibility for the refined partial Hasse invariants. In the third section, we prove the existence of such filtrations for $p$-divisible groups over a valuation ring, and prove an uniqueness result. In the fourth section, we relate these invariants to the partial degrees of the canonical filtration. \\

I would like to thank Valentin Hernandez and Beno\^it Stroh for interesting discussions.

\section{Refined partial Hasse invariants} \label{sec1}

\subsection{Definition}

Let $F$ be a finite unramified extension of $\Q_p$ of degree $f$, $O_F$ its ring of integers and $k=\mathbb{F}_{p^f}$ the residue field. Let $\T$ be the set of embeddings of $F$ into $\overline{\Q_p}$; it is a cyclic group of order $f$ generated by the Frobenius $\sigma$. We will thus identify $\T$ and $\mathbb{Z} / f \mathbb{Z}$. Let $S$ be a $k$-scheme, and $G$ a $p$-divisible group over $S$ of height $h_0$. Let $\E$ be the evaluation of the contravariant Dieudonn\'e crystal of $G$ at $S$ (see \cite{BBM} section $3.3$), it is a locally free sheaf over $S$ of rank $h_0$. The Frobenius and Verschiebung induce morphisms

$$V : \E \to \E^{(p)} \qquad \qquad F : \E^{(p)} \to \E$$
where the superscript denotes a twist by the Frobenius.
Let $\F \subset \E$ be the Hodge filtration; it is a locally free subsheaf of $\E$, and induces the exact sequence (see \cite{BBM} Corollary $3.3.5$)
$$0 \to \omega_G \to \E \to \omega_{G^D}^\vee \to 0 $$
where $\omega_G$ is the sheaf of differentials of $G$, $G^D$ is the Cartier dual of $G$, and $\omega_{G^D}^\vee$ is the dual of the sheaf $\omega_{G^D}$. \\
Assume now that $G$ has an action of $O_F$; the sheaf $\E$ thus decomposes in $\E = \oplus_{i=1}^f \E_i$, with $O_F$ acting on $\E_i$ by $\sigma^i$. The morphisms $F$ and $V$ decompose in
$$V_i : \E_{i} \to \E_{i-1} ^{(p)} \qquad \qquad F_i : \E_{i-1} ^{(p)} \to \E_i$$
for $1 \leq i \leq f$, with $\E_0$ being identified with $\E_f$. The height of $G$ in this case must be a multiple of $f$; let us note $h_0 = f h$. Let $i$ be an integer between $1$ and $f$; the sheaf $\E_i$ is locally free of rank $h$ over $S$. Let $\F_i = \F \cap \E_i$, it is a locally free sheaf over $S$. Let us assume that the rank of this sheaf is constant over $S$, equal to an integer $d_i$. The dimension of $G$ is thus $d=\sum_{i=1}^f d_i$. The subsheaf $\F_i$ induces an exact sequence
$$0 \to \omega_{G,i} \to \E_i \to \omega_{G^D,i}^\vee \to 0 $$
where $\omega_{G,i}$ is the subsheaf of $\omega_G$ on which $O_F$ acts by $\sigma^i$. \\
Let $\wF_i = \Ker V_i$; it is a locally free subsheaf of $\E_i$ and is also equal to the image of $F_i$. Recall the equality $\F_{i-1}^{(p)} = \I V_i = \Ker F_i$ (see \cite{EV} section $3.1$). The applications $F_i$ and $V_i$ thus induce isomorphisms
$$ V_i : \E_i / \wF_i \simeq \F_{i-1}^{(p)}  \qquad F_i : (\E_{i-1} / \F_{i-1})^{(p)} \simeq \wF_i$$
The subsheaf $\wF_i$ will be called the conjugate filtration, and induces the exact sequence
$$0 \to (\omega_{G^D,i-1}^\vee)^{(p)} \to \E_i \to \omega_{G,i-1}^{(p)} \to 0$$
If $\G$ is a subsheaf of $\F_{i-1}$, then $V_i^{-1} (\G^{(p)})$ is a subsheaf of $\E_i$ containing $\wF_i$. Similarly, if $\G$ is a subsheaf of $\E_{i-1}$ containing $\F_{i-1}$, then $F_i(\G^{(p)})$ is a subsheaf of $\wF_i$. One can then see that a filtration on $\E_{i-1}$ refining the Hodge filtration gives a filtration on $\E_i$ refining the conjugate filtration. \\
$ $\\
\indent Let $r$ be the cardinality of the set $\{ d_i, 1 \leq i \leq f \} \cap [1,h-1]$, and let us write $\dd_1 < \dots < \dd_r$ the different elements of this set. Define also $\dd_0=0$, $\dd_{r+1}=h$. For any $1 \leq i \leq f$, there exists a unique integer $0 \leq s(i) \leq r+1$ such that $d_i = \dd_{s(i)}$. \\
We will now make the following hypothesis.

\begin{hypo} \label{fil}
For each $1 \leq i \leq f$, there exists a filtration
$$0 \subset \F_i^{[s(i)-1]} \subset \dots \subset \F_i^{[1]} \subset \F_i \subset \F_i^{[r]} \subset \dots \subset \F_i^{[s(i)+1]} \subset \E_i$$
such that
\begin{itemize}
\item the sheaf $\F_i^{[j]}$ is locally a direct factor of $\E_i$, and is locally free of rank $d_{i}-\dd_j$ for $1 \leq j \leq s(i)-1$.
\item the sheaf $\F_i^{[j]}$ is locally a direct factor of $\E_i$, and is locally free of rank $h+d_{i}-\dd_j$ for $s(i)+1\leq j \leq r$.
\end{itemize}
\end{hypo}

\noindent Let $i$ be an integer between $1$ and $f$. We will also set $\F_i^{[s(i)]} :=0$ and $\F_i^{[0]} := \F_i^{[r+1]} := \F_i$. Note that the sheaves $\F_i / \F_i^{[j]}$ are locally free of rank $\dd_j$ for $0 \leq j \leq s(i)$, and the sheaves $\F_i^{[j]} / \F_i$ are locally free of rank $h-\dd_j$ for $s(i)+1 \leq j \leq r+1$. From our previous remark, these filtrations refining the Hodge filtration induce filtrations refining the conjugate filtration : 
$$ 0 \subset \wF_i^{[r]} \subset \dots \subset \wF_i^{[s(i-1)+1]} \subset \wF_i \subset \wF_i^{[s(i-1)-1]} \subset \dots \subset \wF_i^{[1]} \subset \E_i$$
More precisely, we define $\wF_i^{[j]} := V_i^{-1} ((\F_{i-1}^{[j]})^{(p)})$ for $1 \leq j \leq s(i-1)$, and $\wF_i^{[j]} := F_i ((\F_{i-1}^{[j]})^{(p)})$ for $s(i-1) +1 \leq j \leq r$. We will set $\wF_i^{[0]}:=\E_i$ and $\wF_i^{[r+1]} := 0$. Note that $\wF_i^{[s(i-1)]} = \wF_i$. For each $0 \leq j \leq r+1$, the sheaf $\E_i / \wF_i^{[j]}$ is locally free of rank $\dd_j$. \\
We will also make the following hypothesis.

\begin{hypo} \label{adequate}
For each integer $i$ between $1$ and $f$, we have $\F_i^{[j]} \subset \wF_i^{[j]}$ if $1 \leq j \leq s(i) -1$, and $\wF_i^{[j]} \subset \F_i^{[j]}$ if $s(i)+1 \leq j \leq r$.
\end{hypo}

\noindent The filtrations satisfying these two hypotheses will be called adequate. Before going any further, let us make some remarks.

\begin{rema}
If there exists an integer $0 < d < h$, with $d_i = d$ for all $1 \leq i \leq f$, then $s(i)=r=1$ for all $1 \leq i \leq f$ and the two previous hypotheses are empty. We will refer to this case as the ordinary case.
\end{rema}

\begin{rema}
Assume that $f=2$ , and suppose that $0<d_{1} < d_{2} < h$. The first hypothesis is the existence of filtrations
$$0 \subset \F_{1} \subset \F_{1}^{[2]} \subset \E_{1} \qquad 0 \subset \F_{2}^{[1]} \subset \F_{2} \subset \E_{2}$$
with $\F_{1}^{[2]}$ locally free of rank $h-d_{2}+d_{1}$ and $\F_{2}^{[1]}$ locally free of rank $d_{2} - d_{1}$. The conditions in the second hypothesis are $\F_{1}^{[2]} \supset \wF_{1} = \I F_{1}$ and $\F_{2}^{[1]} \subset \wF_{2} = \Ker V_{2}$. 
\end{rema}

\begin{rema}
Let $1 \leq i \leq f$, and $1 \leq j \leq s(i)-1$; the condition $(\F_i^{[j]})^{(p)} \subset (\wF_i^{[j]})^{(p)}$ is equivalent to the fact that the Verschiebung $V_{i+1}$ sends $\wF_{i+1}^{[j]}$ into $(\wF_i^{[j]})^{(p)}$. Similarly, if $s(i)+1 \leq j \leq r$, the condition $(\wF_i^{[j]})^{(p)} \subset (\F_i^{[j]})^{(p)}$ is equivalent to the fact that the Frobenius $F_{i+1}$ sends $(\wF_{i}^{[j]})^{(p)}$ into $\wF_{i+1}^{[j]}$. \\
Adequate filtrations thus induce refinements of the conjugate filtration stable by the Frobenius and Verschiebung.
\end{rema}

\begin{rema}
The definition of adequate filtrations is made to be compatible with the canonical filtration defined in section \ref{defcanfil}. More precisely, we will show in section \ref{canfil} that the existence of a canonical filtration for the $p$-torsion of $G$ imply the existence of adequate filtrations, justifying the previous definition.
\end{rema}

\begin{defi}
Let $1 \leq i \leq f$. If $1 \leq j \leq s(i)$, we define $\Li_i^{[j]} := \det ( \F_i / \F_i^{[j]})$; if $s(i)+1 \leq j \leq r$, we define $\Li_i^{[j]} := \det(\E_i / \F_i^{[j]}) \otimes \det (\F_i)$.
\end{defi}

\noindent We have thus defined an invertible sheaf $\Li_i^{[j]}$ for all $1 \leq i \leq f$ and $1 \leq j \leq r$. Note that $\Li_i^{[s(i)]} = \det \F_i$.

\begin{prop} \label{iso}
Let $1 \leq i \leq f$ and $1 \leq j \leq r$. We have
$$\det (\E_i / \wF_i^{[j]}) \simeq (\Li_{i-1}^{[j]})^p$$
\end{prop}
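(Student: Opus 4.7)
The plan is to perform a case analysis depending on whether $j \leq s(i-1)$ or $j \geq s(i-1)+1$, because the definition of $\wF_i^{[j]}$ differs in the two cases (via $V_i$ versus $F_i$). In both cases the argument will reduce to transporting the filtration on $\E_{i-1}$ through one of the canonical isomorphisms $V_i : \E_i/\wF_i \simeq \F_{i-1}^{(p)}$ or $F_i : (\E_{i-1}/\F_{i-1})^{(p)} \simeq \wF_i$ recalled in the preceding discussion, and then taking determinants. Since Frobenius twist commutes with $\det$ and turns it into the $p$-th power, $\Li_{i-1}^{[j]}$ will come out raised to the power $p$ in both cases.

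For the first case $1 \leq j \leq s(i-1)$, by definition $\wF_i^{[j]} = V_i^{-1}((\F_{i-1}^{[j]})^{(p)})$ contains $\wF_i$. Passing to the quotient and using the isomorphism $V_i : \E_i/\wF_i \simeq \F_{i-1}^{(p)}$, I get
$$\E_i / \wF_i^{[j]} \simeq \F_{i-1}^{(p)} / (\F_{i-1}^{[j]})^{(p)} \simeq (\F_{i-1} / \F_{i-1}^{[j]})^{(p)}\text{.}$$
Taking determinants and using compatibility of $\det$ with the Frobenius twist gives $\det(\E_i/\wF_i^{[j]}) \simeq (\det(\F_{i-1}/\F_{i-1}^{[j]}))^p = (\Li_{i-1}^{[j]})^p$ by definition.

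For the second case $s(i-1)+1 \leq j \leq r$, one has $\F_{i-1} \subset \F_{i-1}^{[j]}$, and $\wF_i^{[j]} = F_i((\F_{i-1}^{[j]})^{(p)}) \subset \wF_i$. Since $\Ker F_i = \F_{i-1}^{(p)}$, the isomorphism $F_i : (\E_{i-1}/\F_{i-1})^{(p)} \simeq \wF_i$ identifies $\wF_i^{[j]}$ with $(\F_{i-1}^{[j]}/\F_{i-1})^{(p)}$, so that
$$\wF_i / \wF_i^{[j]} \simeq (\E_{i-1} / \F_{i-1}^{[j]})^{(p)}\text{.}$$
Combining with $\E_i/\wF_i \simeq \F_{i-1}^{(p)}$ yields a short exact sequence
$$0 \to (\E_{i-1}/\F_{i-1}^{[j]})^{(p)} \to \E_i/\wF_i^{[j]} \to \F_{i-1}^{(p)} \to 0\text{,}$$
and taking determinants produces $(\det(\E_{i-1}/\F_{i-1}^{[j]}) \otimes \det \F_{i-1})^p = (\Li_{i-1}^{[j]})^p$ by the second branch of the definition of $\Li_{i-1}^{[j]}$.

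There is no genuine obstacle: the only care needed is in identifying $\wF_i^{[j]}/\wF_i$ or $\wF_i/\wF_i^{[j]}$ under the canonical isomorphisms, and then verifying that the resulting determinantal formula matches the two branches in the definition of $\Li_{i-1}^{[j]}$ (the cutoff $s(i-1)$ in the definition of $\wF_i^{[j]}$ matches exactly the cutoff $s(i-1)$ in the definition of $\Li_{i-1}^{[j]}$, which is why the statement is uniform across $j$).
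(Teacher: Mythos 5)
Your proof is correct and follows essentially the same route as the paper: the case $j \leq s(i-1)$ is handled by transporting through $V_i : \E_i/\wF_i \simeq \F_{i-1}^{(p)}$, and the case $j > s(i-1)$ by the multiplicativity of $\det$ on the filtration $\wF_i^{[j]} \subset \wF_i \subset \E_i$ together with the identifications via $F_i$ and $V_i$. You merely spell out the intermediate identification $\wF_i/\wF_i^{[j]} \simeq (\E_{i-1}/\F_{i-1}^{[j]})^{(p)}$ that the paper leaves implicit.
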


\begin{proof}
 Assume that $j \leq s(i-1)$. Then we have $\E_i / \wF_i^{[j]} \simeq (\F_{i-1} / \F_{i-1}^{[j]} )^{(p)}$, hence the result. Suppose now that $j > s(i-1)$. We have
$$\det (\E_i / \wF_i^{[j]}) \simeq \det (\E_i / \wF_i) \otimes \det (\wF_i / \wF_i^{[j]}) \simeq \det (\F_{i-1})^p \otimes \det (\E_{i-1} / \F_{i-1}^{[j]})^p = (\Li_{i-1}^{[j]})^p$$
\end{proof}

\begin{defi} \label{cons}
Let $1 \leq i \leq f$; if $1 \leq j \leq s(i)$ we define the application $H_i^{[j]} : \F_i / \F_i^{[j]} \to \E_i / \wF_i^{[j]}$. If $s(i) < j \leq r$, we define $H_i^{[j]} : \wF_i^{[j]} \to \F_i^{[j]} / \F_i$.
\end{defi}

\noindent Note that these applications are well defined thanks to Hypothesis \ref{adequate} (recall that $\F_i^{[s(i)]} = 0$).

\begin{prop}
Let $1 \leq i \leq f$, and let $1 \leq j \leq r$. The determinant of $H_i^{[j]}$ gives a section $h_i^{[j]} \in H^0(S, (\Li_{i-1}^{[j]})^p (\Li_i^{[j]})^{-1})$.
\end{prop}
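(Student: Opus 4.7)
The plan is to perform a case analysis according to whether $j\le s(i)$ or $j>s(i)$, since $H_i^{[j]}$ is defined differently in the two cases. In each case, the first step is to check that source and target are locally free of the same rank, so that $\det H_i^{[j]}$ makes sense as a global section of $\det(\text{target})\otimes\det(\text{source})^{-1}$. The remaining work is to identify this line bundle with $(\Li_{i-1}^{[j]})^p\otimes(\Li_i^{[j]})^{-1}$ using Proposition \ref{iso} together with the definition of $\Li_i^{[j]}$.

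In the case $1\le j\le s(i)$, both $\F_i/\F_i^{[j]}$ and $\E_i/\wF_i^{[j]}$ are locally free of rank $\dd_j$, the former by Hypothesis \ref{fil} (together with the convention $\F_i^{[s(i)]}=0$) and the latter by the rank assertion recorded just after the definition of the $\wF_i^{[j]}$. Hence $\det H_i^{[j]}$ is a section of
$$\det(\E_i/\wF_i^{[j]})\otimes\det(\F_i/\F_i^{[j]})^{-1}.$$
Proposition \ref{iso} identifies the first tensor factor with $(\Li_{i-1}^{[j]})^p$, and by definition the second is $(\Li_i^{[j]})^{-1}$, which concludes this case.

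In the case $s(i)<j\le r$, both $\wF_i^{[j]}$ and $\F_i^{[j]}/\F_i$ are locally free of rank $h-\dd_j$; this follows from the rank assertions of Hypothesis \ref{fil} and the fact that $\E_i/\wF_i^{[j]}$ has rank $\dd_j$. To identify the determinant line bundle, I would use the three short exact sequences $0\to\wF_i^{[j]}\to\E_i\to\E_i/\wF_i^{[j]}\to 0$, $0\to\F_i^{[j]}\to\E_i\to\E_i/\F_i^{[j]}\to 0$ and $0\to\F_i\to\F_i^{[j]}\to\F_i^{[j]}/\F_i\to 0$ to rewrite $\det(\F_i^{[j]}/\F_i)\otimes\det(\wF_i^{[j]})^{-1}$ as
$$\det(\E_i/\wF_i^{[j]})\otimes\det(\E_i/\F_i^{[j]})^{-1}\otimes\det(\F_i)^{-1};$$
the two copies of $\det(\E_i)$ cancel. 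Proposition \ref{iso} handles the first factor, and by the second definition of $\Li_i^{[j]}$ the remaining two factors together give $(\Li_i^{[j]})^{-1}$.

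There is no real obstacle here: once the ranks have been matched, the statement is determinantal bookkeeping assembled from Proposition \ref{iso} and the two definitions of $\Li_i^{[j]}$. The only point worth flagging is that the definition of $\Li_i^{[j]}$ in the range $s(i)<j\le r$ contains an extra $\det(\F_i)$ factor precisely so that the $\det(\E_i)$ terms coming from the two exact sequences on $\E_i$ cancel cleanly against each other, producing the same answer as in the first case.
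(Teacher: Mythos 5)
Your proposal is correct and follows essentially the same route as the paper: case analysis on $j\le s(i)$ versus $j>s(i)$, with the second case handled by cancelling the two copies of $\det(\E_i)$ to rewrite $\det(\F_i^{[j]}/\F_i)\otimes\det(\wF_i^{[j]})^{-1}$ as $\det(\E_i/\wF_i^{[j]})\otimes(\Li_i^{[j]})^{-1}$ and then invoking Proposition \ref{iso}. The only difference is that you spell out the rank checks and the exact sequences that the paper leaves implicit.
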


\begin{proof}
The result is clear if $j \leq s(i)$. Suppose that $s(i) < j \leq r$; the determinant of $H_i^{[j]}$ gives a section of the invertible sheaf
$$\det (\F_i^{[j]} / \F_i) \otimes \det (\wF_i^{[j]})^{-1} \simeq \det (\E_i) \otimes \det (\E_i / \F_i^{[j]})^{-1} \otimes \det (\F_i)^{-1} \otimes \det (\wF_i^{[j]})^{-1} \simeq \det (\E_i / \wF_i^{[j]}) \otimes (\Li_i^{[j]})^{-1}$$
and this sheaf is isomorphic to $(\Li_{i-1}^{[j]})^p (\Li_i^{[j]})^{-1}$.
\end{proof}

\noindent We will call the elements $(h_i^{[j]})_{i,j}$ the refined partial Hasse invariants. Actually, the section $h_i^{[j]}$ might be expressed as a product of other sections.

\begin{defi}
Let $1 \leq i \leq f$ and $1 \leq j \leq r$. If $j \leq s(i)$, we define $\mathcal{M}_i^{[j]} := \det (\wF_i^{[j-1]} / \wF_i^{[j]}) \otimes \det (\F_i^{[j-1]} / \F_i^{[j]})^{-1}$, and $m_i^{[j]}$ the section of this sheaf induced by the determinant of the map
$$\F_i^{[j-1]} / \F_i^{[j]} \to \wF_i^{[j-1]} / \wF_i^{[j]}$$
If $j > s(i)$, we define $\mathcal{N}_i^{[j]} := \det (\F_i^{[j]} / \F_i^{[j+1]}) \otimes \det (\wF_i^{[j]} / \wF_i^{[j+1]})^{-1}$, and $n_i^{[j]}$ the section of this sheaf induced by the determinant of the map
$$\wF_i^{[j]} / \wF_i^{[j+1]} \to \F_i^{[j]} / \F_i^{[j+1]}$$
We also define $\mathcal{N}_i^{[s(i)]} := \det (\E_i / \F_i^{[s(i)+1]}) \otimes \det (\wF_i^{[s(i)]} / \wF_i^{[s(i)+1]})^{-1}$, and $n_i^{[s(i)]}$ the section of this sheaf induced by the determinant of the map
$$\wF_i^{[s(i)]} / \wF_i^{[s(i)+1]} \to \E_i / \F_i^{[s(i)+1]}$$
\end{defi}

\noindent The next proposition is immediate.

\begin{prop}
Let $1 \leq i \leq f$ and $1 \leq j \leq r$. If $j \leq s(i)$, we have
$$h_i^{[j]} = \prod_{k=1}^j m_i^{[k]}$$
If $j > s(i)$, we have
$$h_i^{[j]} = \prod_{k=j}^r n_i^{[k]}$$
\end{prop}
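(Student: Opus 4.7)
The plan is to proceed by induction on $j$ in each case, using the multiplicativity of the determinant in short exact sequences. The preliminary observation I will use throughout is that, combining Hypothesis \ref{adequate} with the boundary conventions $\F_i^{[0]} = \F_i^{[r+1]} = \F_i$, $\F_i^{[s(i)]} = 0$, $\wF_i^{[0]} = \E_i$, $\wF_i^{[r+1]} = 0$, one has $\F_i^{[k]} \subset \wF_i^{[k]}$ for every $0 \leq k \leq s(i)$ and $\wF_i^{[k]} \subset \F_i^{[k]}$ for every $s(i) \leq k \leq r+1$. In particular, for each relevant $k$ the inclusion $\F_i \hookrightarrow \E_i$ (resp.\ $\wF_i^{[k]} \hookrightarrow \F_i^{[k]}$) induces a map between the successive graded quotients whose determinant is precisely $m_i^{[k]}$ (resp.\ $n_i^{[k]}$).

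For the case $j \leq s(i)$ I induct on $j$. The base case $j = 1$ is tautological, since $\F_i^{[0]} = \F_i$ and $\wF_i^{[0]} = \E_i$ make the map defining $m_i^{[1]}$ literally equal to $H_i^{[1]}$. For the inductive step, the short exact sequences
\[ 0 \to \F_i^{[j-1]}/\F_i^{[j]} \to \F_i/\F_i^{[j]} \to \F_i/\F_i^{[j-1]} \to 0 \]
and
\[ 0 \to \wF_i^{[j-1]}/\wF_i^{[j]} \to \E_i/\wF_i^{[j]} \to \E_i/\wF_i^{[j-1]} \to 0 \]
fit into a commutative ladder whose vertical arrows are $m_i^{[j]}$, $H_i^{[j]}$ and $H_i^{[j-1]}$; compatibility is guaranteed by the inclusions recalled in the preliminary observation. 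Multiplicativity of the determinant in a short exact sequence then yields $h_i^{[j]} = m_i^{[j]} \cdot h_i^{[j-1]}$, and the inductive hypothesis gives the formula.

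The case $j > s(i)$ is entirely symmetric, via a downward induction starting from $j = r$. The base case $h_i^{[r]} = n_i^{[r]}$ holds tautologically because $\wF_i^{[r+1]} = 0$ and $\F_i^{[r+1]} = \F_i$. For the inductive step, I use the commutative ladder built from
\[ 0 \to \wF_i^{[j+1]} \to \wF_i^{[j]} \to \wF_i^{[j]}/\wF_i^{[j+1]} \to 0 \]
and
\[ 0 \to \F_i^{[j+1]}/\F_i \to \F_i^{[j]}/\F_i \to \F_i^{[j]}/\F_i^{[j+1]} \to 0 \]
whose vertical arrows are $H_i^{[j+1]}$, $H_i^{[j]}$ and $n_i^{[j]}$, and the same multiplicativity argument yields $h_i^{[j]} = n_i^{[j]} \cdot h_i^{[j+1]}$.

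As the excerpt itself declares the proposition to be immediate, I expect no real obstacle. The only point requiring verification is that each vertical arrow in the ladders is well defined as a map between the stated quotients, and this reduces precisely to the containments $\F_i^{[k]} \subset \wF_i^{[k]}$ or $\wF_i^{[k]} \subset \F_i^{[k]}$ listed in the preliminary observation.
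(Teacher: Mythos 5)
Your proof is correct and is precisely the argument the paper leaves implicit when it declares the proposition ``immediate'': Hypothesis \ref{adequate} makes each $H_i^{[j]}$ a map of filtered sheaves, and multiplicativity of the determinant across your two ladders of short exact sequences (block-triangularity) yields $h_i^{[j]} = m_i^{[j]}\, h_i^{[j-1]}$, resp.\ $h_i^{[j]} = n_i^{[j]}\, h_i^{[j+1]}$, from which the product formulas follow by induction. One small correction to your preliminary observation: the containment $\wF_i^{[k]} \subset \F_i^{[k]}$ fails at $k = s(i)$, since $\F_i^{[s(i)]} = 0$ while $\wF_i^{[s(i)]}$ has rank $h - d_i$; this is harmless because your inductions only invoke that containment for $s(i)+1 \leq k \leq r+1$ (and the other chain $\F_i^{[k]} \subset \wF_i^{[k]}$ only for $0 \leq k \leq s(i)-1$), but the observation should be stated with those ranges.
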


The definition of the partial Hasse invariants, using a refinement of the Hodge filtration, might seem odd. It is in fact more natural to use the conjugate filtration, but one only gets these invariants to the power $p$.

\begin{prop}
Let $1 \leq i \leq f$ be an integer, and let $1 \leq j \leq r$. If $j \leq s(i)$, then the determinant of the map
$$V_{i+1} : \E_{i+1} / \wF_{i+1}^{[j]} \to (\E_{i} / \wF_{i}^{[j]})^{(p)}$$
gives the section $(h_i^{[j]})^p$. If $j > s(i)$, then the determinant of the map
$$F_{i+1} :  (\wF_{i}^{[j]})^{(p)} \to \wF_{i+1}^{[j]}$$
gives the section $(h_i^{[j]})^p$.
\end{prop}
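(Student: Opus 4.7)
The plan is to reduce each identity to the definition of $h_i^{[j]}$ by showing that the Verschiebung/Frobenius map in the statement is, after a canonical identification of its source, the Frobenius twist of $H_i^{[j]}$.

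For the case $j \leq s(i)$, I would first observe that $V_{i+1} : \E_{i+1} \to \E_i^{(p)}$ is surjective onto $\F_i^{(p)}$ with kernel $\wF_{i+1}$, and hence induces an isomorphism $\E_{i+1}/\wF_{i+1} \xrightarrow{\sim} \F_i^{(p)}$. Since $\wF_{i+1}^{[j]}$ is defined as $V_{i+1}^{-1}((\F_i^{[j]})^{(p)})$, restriction gives an isomorphism $\E_{i+1}/\wF_{i+1}^{[j]} \xrightarrow{\sim} (\F_i/\F_i^{[j]})^{(p)}$. Hypothesis \ref{adequate} says $\F_i^{[j]} \subset \wF_i^{[j]}$, so the inclusion $\F_i \hookrightarrow \E_i$ descends to the map $\F_i/\F_i^{[j]} \to \E_i/\wF_i^{[j]}$ which is by definition $H_i^{[j]}$. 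One checks directly that the map $V_{i+1} : \E_{i+1}/\wF_{i+1}^{[j]} \to (\E_i/\wF_i^{[j]})^{(p)}$ of the statement is the composition of the above isomorphism with $H_i^{[j],(p)}$. Taking determinants then yields $(h_i^{[j]})^p$.

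The case $j > s(i)$ is dual. The Frobenius $F_{i+1} : \E_i^{(p)} \to \E_{i+1}$ has kernel $\F_i^{(p)}$ and image $\wF_{i+1}$, hence induces an isomorphism $(\E_i/\F_i)^{(p)} \xrightarrow{\sim} \wF_{i+1}$. Since $\wF_{i+1}^{[j]} = F_{i+1}((\F_i^{[j]})^{(p)})$, this restricts to an isomorphism $(\F_i^{[j]}/\F_i)^{(p)} \xrightarrow{\sim} \wF_{i+1}^{[j]}$. The inclusion $\wF_i^{[j]} \subset \F_i^{[j]}$ provided by Hypothesis \ref{adequate} means that $F_{i+1} : (\wF_i^{[j]})^{(p)} \to \wF_{i+1}^{[j]}$ is, through this isomorphism, identified with the Frobenius twist of the composition $\wF_i^{[j]} \hookrightarrow \F_i^{[j]} \twoheadrightarrow \F_i^{[j]}/\F_i$, which is exactly $H_i^{[j]}$. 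Taking determinants gives $(h_i^{[j]})^p$.

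The argument is essentially formal once the adequacy hypothesis is invoked to ensure well-definedness of the maps into the relevant quotients. The one point I would treat with some care is bookkeeping of the line bundle identifications: the determinant of $V_{i+1}$ a priori lives in $\det(\E_i/\wF_i^{[j]})^{(p)} \otimes \det(\E_{i+1}/\wF_{i+1}^{[j]})^{-1}$, and I need to verify that the isomorphisms of Proposition \ref{iso} translate this line bundle into $(\Li_{i-1}^{[j]})^{p^2} (\Li_i^{[j]})^{-p}$ compatibly with the factorization above, so that the resulting section is literally $(h_i^{[j]})^p$ (and not merely a unit multiple of it). The same bookkeeping applies mutatis mutandis in the Frobenius case.
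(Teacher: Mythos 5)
Your proposal is correct and follows essentially the same route as the paper: identify the source of the Verschiebung (resp. Frobenius) map with $(\F_i/\F_i^{[j]})^{(p)}$ (resp. identify the target with $(\F_i^{[j]}/\F_i)^{(p)}$) via the canonical isomorphism induced by $V_{i+1}$ (resp. $F_{i+1}$), recognize the resulting map as the Frobenius twist of $H_i^{[j]}$, and take determinants. The paper states the $j>s(i)$ case only as "similar" and does not spell out the line-bundle bookkeeping, which you handle explicitly; otherwise the arguments coincide.
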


\begin{proof}
Assume that $j \leq s(i)$. The section $h_i^{[j]}$ is induced by the determinant of the map $\F_i / \F_i^{[j]} \to \E_i / \wF_i^{[j]}$. The section $(h_i^{[j]})^p$ is thus induced by the determinant of the map
$$(\F_i / \F_i^{[j]})^{(p)} \to (\E_i / \wF_i^{[j]})^{(p)}$$
But the Verschiebung induces an isomorphism 
$$V_{i+1} : \E_{i+1} / \wF_{i+1}^{[j]} \simeq (\F_i / \F_i^{[j]})^{(p)}$$
hence the result. The case $j > s(i)$ is similar.
\end{proof}

\subsection{Relation with the $\mu$-ordinary partial Hasse invariants} \label{mu_ha}

Recall that we have a decomposition
$$\omega_G = \bigoplus_{i=1}^f \omega_{G,i}$$
where $\omega_{G,i}$ is locally free of rank $d_i$, for each $1 \leq i \leq f$. Let us call $\mu=(d_i)_{1 \leq i \leq f}$; the $\mu$-ordinary partial Hasse invariant attached to $i$, $Ha_i (G)$, is constructed in \cite{He_ha} and is an element in $H^0 (S, (\det \omega_{G,i})^{p^f-1})$ for each $1 \leq i \leq f$ with $d_i \neq 0$. The product of these invariants is the total $\mu$-ordinary Hasse invariant
$$Ha(G) \in H^0(S, (\det \omega_G)^{p^f-1})$$
Note that this last invariant was constructed for some Shimura varieties in \cite{GN}. We will first recall the definition of the elements $Ha_{i} (G)$; actually, we will use the construction in \cite{Bi_He}. In this article, the authors gave a simpler construction of these invariants, and extended it to the ramified case for $p$-divisible groups with Pappas-Rapoport condition. We will then relate these elements to the sections $h_i^{[j]}$. More precisely, we will show that $Ha_i (G)$ is equal to the product of some powers of $h_{k}^{[s(i)]}$, for $1 \leq k \leq f$ (where $1 \leq i \leq f$ is an element with $d_i \notin \{0,h\}$). 

\begin{prop}
Let $1 \leq i \leq f$, and let $d_{i-1} < d \leq h$ be an integer. Then the map
\begin{align*}
\bigwedge^{d_{i-1}} (\E_i) \otimes \bigwedge^{d - d_{i-1}} \wF_i & \to \bigwedge^d \E_{i-1}^{(p)}  \\
(x_1 \wedge \dots \wedge x_{d_{i-1}}) \otimes (F_i y_1 \wedge \dots \wedge F_i y_{d - d_{i-1}}) & \to (V_i x_1 \wedge \dots \wedge V_i x_{d_{i-1}}) \wedge (y_1 \wedge \dots \wedge y_{d - d_{i-1}})
\end{align*}
is well defined, and factors through the natural surjection $\bigwedge^{d_{i-1}} (\E_i) \otimes \bigwedge^{d - d_{i-1}} \wF_i \twoheadrightarrow \bigwedge^d \E_i$.
\end{prop}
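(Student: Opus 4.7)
The plan is to prove the two claims in order: well-definedness of the map, then factoring through the wedge-product surjection. Both rest on the rank-count observation that $\I V_i = \F_{i-1}^{(p)}$ has rank $d_{i-1}$.

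For well-definedness, the subtle point is that the lift $y_j$ of $F_i y_j \in \wF_i$ is only determined modulo $\Ker F_i = \F_{i-1}^{(p)}$. Replacing $y_1$ by $y_1 + z$ with $z \in \F_{i-1}^{(p)}$ changes the right-hand side by $V_i x_1 \wedge \cdots \wedge V_i x_{d_{i-1}} \wedge z \wedge y_2 \wedge \cdots \wedge y_{d - d_{i-1}}$. Since $V_i$ factors through the isomorphism $\E_i / \wF_i \xrightarrow{\sim} \F_{i-1}^{(p)}$, the $d_{i-1} + 1$ sections $V_i x_1, \ldots, V_i x_{d_{i-1}}, z$ all lie in $\F_{i-1}^{(p)}$, a subsheaf of rank $d_{i-1}$; their wedge vanishes in $\bigwedge^{d_{i-1}+1} \F_{i-1}^{(p)} = 0$, so the expression is zero in $\bigwedge^d \E_{i-1}^{(p)}$. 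Multilinearity and alternation in the $x_j$ and $y_j$ are immediate.

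To prove the factoring through $\bigwedge^d \E_i$, I would construct the factored map $\nu$ explicitly using Koszul filtrations. Equip $\bigwedge^d \E_i$ with the filtration $\mathrm{Fil}^j = $ image of $\bigwedge^j \wF_i \otimes \bigwedge^{d-j} \E_i \to \bigwedge^d \E_i$ attached to $0 \to \wF_i \to \E_i \to \F_{i-1}^{(p)} \to 0$. The graded pieces $\bigwedge^j \wF_i \otimes \bigwedge^{d-j} \F_{i-1}^{(p)}$ vanish for $j < d - d_{i-1}$, so $\mathrm{Fil}^{d-d_{i-1}} = \bigwedge^d \E_i$ and one obtains a natural surjection $p : \bigwedge^d \E_i \twoheadrightarrow \bigwedge^{d-d_{i-1}} \wF_i \otimes \bigwedge^{d_{i-1}} \F_{i-1}^{(p)}$. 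Define $\lambda$ from the target of $p$ to $\bigwedge^d \E_{i-1}^{(p)}$ by $\omega \otimes t \mapsto t \wedge \widetilde{F_i^{-1} \omega}$, where the tilde denotes any lift to $\bigwedge^{d-d_{i-1}} \E_{i-1}^{(p)}$; the same rank-count argument makes $\lambda$ well-defined. Setting $\nu = \lambda \circ p$, a direct check on decomposable tensors shows that the asserted map coincides with $\nu \circ \pi$, where $\pi$ is the wedge-product surjection.

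The main obstacle is the Koszul projection identity $p(x_1 \wedge \cdots \wedge x_{d_{i-1}} \wedge F_i y_1 \wedge \cdots \wedge F_i y_{d - d_{i-1}}) = (F_i y_1 \wedge \cdots \wedge F_i y_{d-d_{i-1}}) \otimes (V_i x_1 \wedge \cdots \wedge V_i x_{d_{i-1}})$, i.e., that in the projection to the top Koszul piece the $F_i y_k$'s fill the $\wF_i$-factor while the $x_j$'s contribute via $V_i$ to the $\F_{i-1}^{(p)}$-factor. Once this bookkeeping of natural identifications is in place, the asserted factorization is immediate on generators. As an alternative, one can bypass the Koszul framework and work in a local basis $e_1, \ldots, e_h$ of $\E_i$ with $e_1, \ldots, e_{h - d_{i-1}}$ a basis of $\wF_i$: then $V_i e_I$ is nonzero only for $I = \{h - d_{i-1} + 1, \ldots, h\}$, and for each index set $K$ of size $d$ containing this $I$ there is a unique allowed decomposition in the domain of $\pi$, so the corresponding kernel relation of $\pi$ forces the only coefficient contributing nontrivially to the map to vanish.
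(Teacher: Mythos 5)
Your proof is correct, but the paper itself gives no argument for this proposition: it simply defers to \cite{Bi_He}, Proposition 2.2.10, so there is no internal proof to compare against. What you supply is a complete, self-contained verification, and both halves are sound. For well-definedness, the only genuine issue is the ambiguity of the lifts $y_j$ modulo $\Ker F_i = \F_{i-1}^{(p)}$, and your rank count (the $V_i x_k$ land in $\F_{i-1}^{(p)}$, which is locally a direct factor of rank $d_{i-1}$, so wedging $d_{i-1}+1$ sections of it gives zero) disposes of it correctly. For the factorization, constructing the descended map as $\lambda \circ p$, where $p$ is the projection onto the top nonvanishing graded piece $\bigwedge^{d-d_{i-1}}\wF_i \otimes \bigwedge^{d_{i-1}}(\E_i/\wF_i)$ of the Koszul filtration attached to $0 \to \wF_i \to \E_i \to \F_{i-1}^{(p)} \to 0$, is the cleanest route: it simultaneously proves that the map $\bigwedge^{d_{i-1}}\E_i \otimes \bigwedge^{d-d_{i-1}}\wF_i \to \bigwedge^d\E_i$ is surjective (a claim the proposition uses but does not justify) and exhibits the asserted map as $\nu \circ \pi$, which is exactly what factoring through $\pi$ means. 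The remaining bookkeeping you flag (the sign and the identification of the graded piece on decomposables) is routine and harmless, since the sign is a universal constant depending only on $d$ and $d_{i-1}$ and can be absorbed into the choice of isomorphism. The local-basis sketch at the end is terser and its final sentence is garbled, but the $\lambda\circ p$ argument alone suffices, so nothing essential rests on it.
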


\noindent This proposition follows from \cite{Bi_He} prop. $2.2.10$. In that case, we will call $f_i^d : \bigwedge^d \E_i \to \bigwedge^d \E_{i-1}^{(p)}$ the induced map. If $d \leq d_{i-1}$, we will define $f_i^d$ to be $\bigwedge^d V_i$.

\begin{defi}
Let $1 \leq i \leq f$ with $d_i \neq 0$. The $\mu$-ordinary partial Hasse invariant attached to $i$, $Ha_i (G)$, is the section induced by the map
\begin{displaymath}
\xymatrix{
\bigwedge^{d_i} \F_i \hookrightarrow \bigwedge^{d_i} \E_i \ar[r]^{f_i^{d_i}} & \bigwedge^{d_i} \E_{i-1}^{(p)} \to & \dots \ar[r]^{(f_{i+2}^{d_i})^{(p^{f-2})}} & \bigwedge^{d_i} \E_{i+1}^{(p^{f-1})} \ar[rr]^{\bigwedge^{d_i} V_{i+1}^{(p^{f-1})}} && \bigwedge^{d_i} \F_{i}^{(p^f)}
}
\end{displaymath}
\end{defi}

\noindent The $\mu$-ordinary partial Hasse invariant $Ha_i (G)$, is thus a section of $(\det \F_i)^{p^f-1}$.

\begin{theo} \label{link}
Let $1 \leq i \leq f$ with $d_i \notin \{0,h\}$. Then we have
$$Ha_i (G) = h_{i}^{[s(i)]} \cdot (h_{i-1}^{[s(i)]})^p \cdot \dots \cdot (h_{i+1}^{[s(i)]})^{p^{f-1}}$$
\end{theo}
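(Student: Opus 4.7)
The plan is to factor the composition defining $Ha_i(G)$ through a chain of line bundles determined by the adequate filtration, with each step identified via the preceding proposition.

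Set $d=d_i$ and $s=s(i)$, so $d=\dd_s$ and $1\le s\le r$. First I would check that both sides live in $(\det\F_i)^{p^f-1}=(\Li_i^{[s]})^{p^f-1}$: this is a telescoping computation, using that each $h_k^{[s]}$ is a section of $(\Li_{k-1}^{[s]})^p (\Li_k^{[s]})^{-1}$ and the weights $p^{i-k}$ cancel except for the contribution at $k=i$. So the identity to prove is an equality of sections in a single line bundle.

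The main step is to show that, at each stage $\bigwedge^d \E_k^{(p^{i-k})}$ of the composition, the natural projection onto the line bundle $\det(\E_k/\wF_k^{[s]})^{(p^{i-k})}=(\Li_{k-1}^{[s]})^{p^{i-k+1}}$ (via Proposition \ref{iso}) commutes with $(f_k^d)^{(p^{i-k})}$ and yields $(h_{k-1}^{[s]})^{p^{i-k+1}}$ of the last proposition. I would prove this in three parts. \emph{(a) Initial step}: the inclusion $\bigwedge^d \F_i\hookrightarrow \bigwedge^d \E_i$ composed with $\bigwedge^d \E_i\twoheadrightarrow \det(\E_i/\wF_i^{[s]})$ yields $h_i^{[s]}$, using that $\F_i^{[s]}=0$ when $s=s(i)$, so the induced map $\F_i\to \E_i/\wF_i^{[s]}$ is exactly $H_i^{[s]}$. \emph{(b) Intermediate steps}: the map $(f_j^d)^{(p^{i-j})}$ descends to $(h_{j-1}^{[s]})^{p^{i-j+1}}$. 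When $s\le s(j-1)$, $f_j^d=\bigwedge^d V_j$ and the descent is immediate because $V_j(\wF_j^{[s]})=(\F_{j-1}^{[s]})^{(p)}\subset (\wF_{j-1}^{[s]})^{(p)}$ by Hypothesis~\ref{adequate}. When $s>s(j-1)$, the mixed formula for $f_j^d$ forces the use of the natural factorization $\bigwedge^d \E_j\twoheadrightarrow \det\F_{j-1}^{(p)}\otimes \bigwedge^{d-d_{j-1}}\wF_j$ followed by the $F_j^{-1}$-identification $\wF_j/\wF_j^{[s]}\simeq (\E_{j-1}/\F_{j-1}^{[s]})^{(p)}$; this parallels the two cases of Proposition~\ref{iso}. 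In either case the last proposition identifies the descent with the desired section. \emph{(c) Final step}: $\bigwedge^d V_{i+1}^{(p^{f-1})}$ descends to an isomorphism of line bundles, since $s=s(i)$ forces $\wF_{i+1}^{[s]}=V_{i+1}^{-1}(0)=\wF_{i+1}$ and $V_{i+1}:\E_{i+1}/\wF_{i+1}\xrightarrow{\sim}\F_i^{(p)}$.

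Assembling the contributions then gives $Ha_i(G)=h_i^{[s]}\cdot (h_{i-1}^{[s]})^p\cdots (h_{i+1}^{[s]})^{p^{f-1}}$.

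The main obstacle is the descent in step (b) when $s>s(j-1)$: the mixed map $f_j^d$ does not commute naively with the projections modulo $\wF^{[s]}$, because $y\in(\F_{j-1}^{[s]})^{(p)}$ need not lie in $(\wF_{j-1}^{[s]})^{(p)}$. Resolving this requires the intermediate factorization through $\det\F_{j-1}^{(p)}\otimes\bigwedge^{d-d_{j-1}}\wF_j$ and the precise bookkeeping mirroring the second case of Proposition~\ref{iso}, so that the two possible target descriptions $(\det\F_{j-1}/\F_{j-1}^{[s]})^{(p)}$ and $\det(\wF_j/\wF_j^{[s]})$ of the relevant line bundle are matched correctly.
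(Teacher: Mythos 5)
Your proposal is correct and takes essentially the same route as the paper: both factor the composition defining $Ha_i(G)$ through the quotient line bundles $\det(\E_k/\wF_k^{[s(i)]})^{(p^{i-k})}$, identify each descended step with the corresponding power of $h_{k-1}^{[s(i)]}$ via the alternative descriptions of these sections, and handle the mixed case of $f_k^{d}$ by unravelling the isomorphism of Proposition \ref{iso} against the explicit formula for $f_k^{d}$ (which the paper carries out by a local basis computation). The obstacle you flag in the case $s>s(j-1)$ is exactly the point the paper's local computation is designed to address.
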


\begin{proof}
We will prove this statement by giving an alternative description of the maps $h_{k}^{[j]}$. Let $1 \leq k \leq f$, and $1 \leq j \leq r$ be integers. First assume that $j \leq s(k)$. Then the element $h_k^{[j]}$ is induced by the natural map $\det (\F_k / \F_k^{[j]}) \to \det (\E_k / \wF_k^{[j]})$. Suppose now that $j > s(k)$. We have isomorphisms of sheaves
\begin{align*}
\det(\F_k^{[j]} / \F_k) \otimes \det(\wF_k^{[j]})^{-1} & \simeq \det(\E_k) \otimes \det(\E_k / \F_k^{[j]})^{-1} \otimes \det(\F_k)^{-1} \otimes \det(\wF_k^{[j]})^{-1} \\
& \simeq \det(\E_k / \wF_k^{[j]}) \otimes \det(\E_k/\F_k^{[j]})^{-1} \otimes \det(\F_k)^{-1}
\end{align*}
The element $h_k^{[j]}$ is obtained by taking the determinant of the natural map $\wF_k^{[j]} \to \F_k^{[j]} / \F_k$. Using the previous isomorphism, it is also induced by the natural map
$$\det (\E_k / \F_k^{[j]}) \otimes \det (\F_k) \to \det (\E_k / \wF_k^{[j]})$$
We are left to unravel the isomorphism $\det (\E_k / \wF_k^{[j]}) \simeq  (\Li_{k-1}^{[j]})^p$ from Proposition \ref{iso}. Assume that $j \leq s(k-1)$. The isomorphism $\det (\E_k / \wF_k^{[j]}) \simeq \det ((\F_{k-1} / \F_{k-1}^{[j]} )^{(p)})$ is given by $\wedge^{\dd_j} V_k$. \\
Now let us suppose that $j > s(k-1)$. There are isomorphisms 
$$\det (\E_k / \wF_k^{[j]}) \simeq \det(\E_k / \wF_k) \otimes \det(\wF_k / \wF_k^{[j]}) \simeq \det (\F_{k-1}^{(p)}) \otimes \det ((\E_{k-1}/ \F_{k-1}^{[j]})^{(p)})$$
Assume that we work locally, and let $e_1, \dots, e_{\dd_j}$ be a basis of $\E_k / \wF_k^{[j]}$. This can be done such that $e_{d_{k-1}+1}, \dots, e_{\dd_j}$ are in $\wF_k$. Since this space is the image of $F_{k}$, we have $e_l = F_k x_l$ for some $x_l \in \E_{k-1}^{(p)}$, for all $d_{k-1}+1 \leq l \leq \dd_j$. The image of $e_1 \wedge \dots \wedge e_{\dd_j}$ by the previous isomorphism is then
$$(V_k(e_{1}) \wedge \dots \wedge V_k(e_{d_{k-1}}) ) \otimes (x_{d_{k-1}+1} \wedge \dots \wedge x_{\dd_j})$$
This concludes the proof.
\end{proof}

\begin{rema}
If $d_i = h$, then the sheaf $(\det \omega_{G,i})^{p^f-1}$ is trivial, and the element $Ha_i (G)$ is a non-zero section of this trivial sheaf. Indeed, the map $f_i^h$ induces an isomorphism between $\det \E_i$ and $(\det \E_{i-1})^p$ for all $1 \leq i \leq f$.
\end{rema}

\section{Compatibility with duality}

\subsection{The classical Hasse invariant}

Let $S$ be a $\mF_p$-scheme, and $G$ be a $p$-divisible over $S$ of height $h_0$ and dimension $d$, with $0 < d <h_0$. Let us denote $\E$ the evaluation of the contravariant Dieudonn\'e crystal at $S$ (see \cite{BBM} section $3.3$); it is a locally free sheaf over $S$ of rank $h_0$. The Frobenius and the Verschiebung induce maps $F : \E^{(p)} \to \E$ and $V : \E \to \E^{(p)}$. The Hodge filtration gives a subsheaf $\F \subset \E$, locally free of rank $d$, and which induces the exact sequence (see \cite{BBM} corollary $3.3.5$)
$$0 \to \omega_G \to \E \to \omega_{G^D}^\vee \to 0$$
where $\omega_G$ is the conormal sheaf of $G$ along its unit section, $G^D$ is the Cartier dual of $G$, and the notation $\mathcal{G}^\vee$ means the dual of the sheaf $\mathcal{G}$. Moreover, $\F^{(p)} = $ Im $V = $ Ker $F$ (see \cite{EV} section $3.1$). Let $\widetilde{\F}$ denote the subsheaf of $\E$ defined by Ker $V = $ Im $F$ ; we will call $\wF$ the conjugate filtration. The Frobenius and Verschiebung induce isomorphisms

\begin{displaymath}
F : (\E / \F)^{(p)} \simeq \widetilde{\F}      \qquad  \qquad   V : \E / \widetilde{\F} \simeq \F^{(p)}
\end{displaymath}

\noindent The sheaf $\wF$ is thus locally free of rank $h_0 - d$ and induces an exact sequence
$$0 \to (\omega_{G^D}^\vee)^{(p)} \to \E \to \omega_G^{(p)} \to 0$$

\begin{defi}
The Hasse map $Ha(G)$ for $G$ is the map $V : \omega_G \to \omega_G^{(p)}$. Let $\Li_G$ be the invertible sheaf defined by $\det \omega_G$. The determinant of the Hasse map induces a section $ha(G) \in H^0(S, \Li_G^{p-1})$, called the Hasse invariant of $G$.
\end{defi}

\noindent The evaluation of the contravariant Dieudonn\'e crystal of $G^D$ at $S$ is $\E^\vee$, and the Hodge filtration on this space is induced by $\F^\bot$ (see \cite{BBM} section $5.3$). Moreover, the Frobenius and Verschiebung are given respectively by
$$V^\vee : (\E^\vee)^{(p)} \to \E^\vee \qquad \qquad F^\vee : \E^\vee \to (\E^\vee)^{(p)}$$
The map $Ha(G^D)^\vee$ is thus the map $F : (\E / \F)^{(p)} \to \E / \F$. 

\begin{lemm}
Using the previous isomorphisms, the Hasse map $Ha(G)$ is the natural map $\F \to~\E / \wF$, obtained by composing the inclusion of $\F$ in $\E$ with the projection to $\E / \wF$. Similarly, the map $Ha(G^D)^\vee$ is the natural map $\wF \to \E / \F$.
\end{lemm}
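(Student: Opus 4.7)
The plan is to unwind the definitions directly. For the first assertion, I would identify $\omega_G$ with $\F$ and $\omega_G^{(p)}$ with $\F^{(p)}$, and then recall that $\F^{(p)}$ has been identified with $\E/\wF$ via the isomorphism $V : \E/\wF \xrightarrow{\sim} \F^{(p)}$ recorded just above. Under these identifications, the Hasse map becomes $V|_\F : \F \to \F^{(p)}$. Since $V$ annihilates $\wF$ by definition, the class $\bar x$ of any local section $x \in \F$ in $\E/\wF$ satisfies $V(\bar x) = V(x)$, so under the identification via $V$ the map $V|_\F$ corresponds precisely to the composition $\F \hookrightarrow \E \twoheadrightarrow \E/\wF$.

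For the second assertion, I would use that $\E_{G^D} \cong \E^\vee$ with Hodge filtration $\F^\bot$, and that the Verschiebung of $G^D$ is given on $\E^\vee$ by $F^\vee$. Consequently, $Ha(G^D) : \omega_{G^D} \to \omega_{G^D}^{(p)}$ is identified with the restriction of $F^\vee$ to $\F^\bot$, and its dual is a map $(\E/\F)^{(p)} \to \E/\F$ obtained from $F : \E^{(p)} \to \E$ by passing to the quotient by $\F^{(p)} = \Ker F$ on the source and by $\F$ on the target. Because $F$ factors through an isomorphism $\bar F : (\E/\F)^{(p)} \xrightarrow{\sim} \wF$, this quotient map coincides, after using $\bar F$ to identify $(\E/\F)^{(p)}$ with $\wF$, with the natural composition $\wF \hookrightarrow \E \twoheadrightarrow \E/\F$.

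The main obstacle I foresee is not conceptual but notational: one must check that the identifications $V : \E/\wF \simeq \F^{(p)}$ and $F : (\E/\F)^{(p)} \simeq \wF$ are compatible with the duality conventions for the Dieudonn\'e crystal (in particular with the prescription that the Verschiebung of $G^D$ acts as $F^\vee$), so that after dualizing one really lands in $\E/\F$ via the map described. Once this compatibility is recorded, both statements reduce to the elementary principle that a morphism vanishing on a submodule descends uniquely to the quotient, and here that descent is inverse to the very isomorphism used to phrase the lemma.
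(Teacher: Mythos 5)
Your proposal is correct and follows essentially the same route as the paper: identify the Hasse map with $V|_{\F}$ and use that $V$ kills $\wF$ so that composing with the inverse of $V:\E/\wF\simeq\F^{(p)}$ yields the natural map $\F\to\E/\wF$, and dually identify $Ha(G^D)^\vee$ with the map induced by $F$ and compose with the inverse of $F:(\E/\F)^{(p)}\simeq\wF$. The compatibility with the duality conventions that you flag is exactly what the paper records in the paragraph preceding the lemma, so nothing further is needed.
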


\begin{proof}
The Hasse map is by definition the map induced by the Verschiebung $\F \to \F^{(p)}$. Since the inverse of the Verschiebung gives an isomorphism $\F^{(p)} \simeq \E / \widetilde{\F}$, the claim follows. The map $Ha(G^D)^\vee$ is the map $F : (\E / \F)^{(p)} \to \E / \F$. The composition of the inverse of the Frobenius $\wF \simeq (\E / \F)^{(p)}$ with this map gives the natural map $\wF \to \E / \F$.
\end{proof}

Before proving the duality compatibility for the Hasse invariant, let us state a general proposition, which will be useful throughout this section.

\begin{prop} \label{dual}
Let $\mathcal{A}$ be a locally free sheaf of rank $r$ over $S$, and let $0 < s < r$ be an integer. Let $\B \subset \A$ and $\C \subset \A$ be two locally free sheaves of rank respectively $s$ and $r-s$, such that $\A / \B$ and $\A/ \C$ are locally free. Then we have an isomorphism of invertible sheaves
$$\det(\A / \B) \otimes \det(\C)^{-1} \simeq \det(\A / \C) \otimes \det(\B)^{-1}$$
Let $x \in H^0(S, \det(\A / \B) \otimes \det(\C)^{-1})$ be the section corresponding to the determinant of the natural map $\C \to \A / \B$. Then $x$ is mapped to $y$ under the isomorphism
$$H^0(S, \det(\A / \B) \otimes \det(\C)^{-1}) \simeq H^0(S, \det(\A / \C) \otimes \det(\B)^{-1})$$
where $y \in H^0(S, \det(\A / \C) \otimes \det(\B)^{-1})$ is the section corresponding to the determinant of the natural map $\B \to \A / \C$.
\end{prop}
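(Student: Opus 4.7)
The plan is to identify both invertible sheaves with one and the same auxiliary invertible sheaf, and then check that both sections correspond to the same element in that common target.

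\textbf{Step 1 (the isomorphism).} Since $\A/\B$ is locally free, the short exact sequence $0 \to \B \to \A \to \A/\B \to 0$ yields a canonical determinantal isomorphism $\det(\A) \simeq \det(\B) \otimes \det(\A/\B)$, which rearranges to
$$\det(\A/\B) \otimes \det(\C)^{-1} \;\simeq\; \det(\A) \otimes \det(\B)^{-1} \otimes \det(\C)^{-1}.$$
The analogous sequence $0 \to \C \to \A \to \A/\C \to 0$ gives $\det(\A/\C) \otimes \det(\B)^{-1} \simeq \det(\A) \otimes \det(\C)^{-1} \otimes \det(\B)^{-1}$, which has the same target after commuting tensor factors. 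Composing these two produces the advertised isomorphism.

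\textbf{Step 2 (comparing the sections).} The statement is local on $S$, so I choose local bases $b_1,\dots,b_s$ of $\B$ and $c_1,\dots,c_{r-s}$ of $\C$, and I set $\underline{b} = b_1 \wedge \cdots \wedge b_s$, $\underline{c} = c_1 \wedge \cdots \wedge c_{r-s}$. By construction, $x$ is characterized by the identity $x \cdot \underline{c} = \bar{c}_1 \wedge \cdots \wedge \bar{c}_{r-s}$ in $\det(\A/\B)$, where $\bar c_j$ denotes the image of $c_j$ modulo $\B$. Under the isomorphism of Step 1, this right-hand side is represented by $(\underline{b} \wedge \underline{c}) \otimes \underline{b}^{-1}$, so that $x$ corresponds to
$$(\underline{b} \wedge \underline{c}) \otimes \underline{b}^{-1} \otimes \underline{c}^{-1} \;\in\; \det(\A) \otimes \det(\B)^{-1} \otimes \det(\C)^{-1}.$$
The symmetric computation carried out for $y$, using the exact sequence associated with $\C$ and the basis $\underline{b}$ to span $\A/\C$ modulo $\C$, produces the same element in the common target. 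This gives the equality $x \mapsto y$.

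\textbf{Where the work is.} The proof is essentially formal once Step 1 is set up: the content is entirely in the identification of the two line bundles. The only bookkeeping point is the sign coming from the reordering $\underline{c} \wedge \underline{b} = (-1)^{s(r-s)}\, \underline{b} \wedge \underline{c}$ in $\det(\A)$, which must be absorbed into the isomorphism of Step 1 (equivalently, this isomorphism is uniquely specified by the requirement that $x$ map to $y$). Once the sign convention is fixed consistently, no further computation is needed.
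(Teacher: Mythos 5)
Your proposal is correct and follows essentially the same route as the paper: both identify the two line bundles with the common target $\det(\A)\otimes\det(\B)^{-1}\otimes\det(\C)^{-1}$ and then observe that $x$ and $y$ both correspond there to the determinant of $\B\oplus\C\to\A$ (which is exactly your element $(\underline{b}\wedge\underline{c})\otimes\underline{b}^{-1}\otimes\underline{c}^{-1}$). Your explicit local-basis computation and the remark on the sign $(-1)^{s(r-s)}$ are just a more detailed spelling-out of what the paper asserts in one line.
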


\begin{proof}
We have isomorphisms of invertible sheaves
$$\det(\A) \simeq \det(\A / \B) \otimes \det(\B) \simeq \det(\A / \C) \otimes \det(\C)$$
so the invertible sheaves $\det(\A / \B) \otimes \det(\C)^{-1}$ and $\det(\A / \C) \otimes \det(\B)^{-1}$ are both isomorphic to 
$$\det(\A) \otimes \det(\B)^{-1} \otimes \det(\C)^{-1}$$
We thus have isomorphisms
$$H^0(S, \det(\A / \B) \otimes \det(\C)^{-1}) \simeq H^0(S, \det(\A) \otimes \det(\B)^{-1} \otimes \det(\C)^{-1}) \simeq H^0(S, \det(\A / \C) \otimes \det(\B)^{-1})$$
One then sees that the elements $x$ and $y$ are mapped to the same element in $H^0(S, \det(\A) \otimes~\det(\B)^{-1} \otimes~\det(\C)^{-1})$. Namely, they are mapped to the section induced by the determinant of the map
$$\B \oplus \C \to \A$$
\end{proof}

\noindent By a slight abuse of notation, we will say that $x=y$ under the isomorphism of invertible sheaves $\det(\A / \B) \otimes \det(\C)^{-1} \simeq~\det(\A / \C) \otimes \det(\B)^{-1}$. This proposition allows us to give a simple proof for the compatibility with the duality of the Hasse invariant, obtained in \cite{Fa} proposition $2$.

\begin{theo} \label{dual1}
There is an isomorphism $\Li_G^{p-1} \simeq \Li_{G^D}^{p-1}$. With this isomorphism, one has the equality $ha(G) = ha(G^D)$.
\end{theo}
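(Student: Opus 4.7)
The plan is a direct application of Proposition \ref{dual} to the triple $\A=\E$, $\B=\F$, $\C=\wF$, combined with the reformulation of the Hasse invariants given by the preceding lemma.

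First I would identify both $ha(G)$ and $ha(G^D)$ as determinants of natural inclusion-projection maps inside $\E$. By the lemma, $Ha(G)$ is the natural map $\F\to \E/\wF$, whose determinant lives in $H^0(S,\det(\E/\wF)\otimes\det(\F)^{-1})$; using the Verschiebung isomorphism $V:\E/\wF\xrightarrow{\sim}\F^{(p)}$ one has $\det(\E/\wF)\simeq(\det\F)^{p}$, so this target is canonically $\Li_G^{p-1}$. Symmetrically, $Ha(G^D)^{\vee}$ is the natural map $\wF\to\E/\F$; its determinant sits in $H^0(S,\det(\E/\F)\otimes\det(\wF)^{-1})$, and via the Frobenius isomorphism $F:(\E/\F)^{(p)}\xrightarrow{\sim}\wF$ this target becomes $(\det(\E/\F))^{1-p}\simeq\Li_{G^D}^{p-1}$.

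Next I would apply Proposition \ref{dual}. The hypotheses are satisfied because $\F$ and $\wF$ are locally direct factors of $\E$ of complementary ranks $d$ and $h_0-d$. The proposition provides a canonical isomorphism
$$\det(\E/\wF)\otimes\det(\F)^{-1}\simeq\det(\E/\F)\otimes\det(\wF)^{-1},$$
both sides being canonically $\det(\E)\otimes\det(\F)^{-1}\otimes\det(\wF)^{-1}$, and it asserts that the section $x$ coming from $\F\to \E/\wF$ equals the section $y$ coming from $\wF\to \E/\F$. Composing this with the two identifications above yields the desired isomorphism $\Li_G^{p-1}\simeq\Li_{G^D}^{p-1}$ together with the equality $ha(G)=ha(G^D)$.

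There is essentially no analytic obstacle here; the real content is the bookkeeping of the three line-bundle identifications (via $V$, via $F$, and via Proposition \ref{dual}). The point I would emphasize — which is what makes the proof \emph{natural} as advertised in the second main theorem — is that once the Hasse map is rewritten as the intrinsic map $\F\to\E/\wF$, the duality statement becomes a tautological instance of the symmetry $\B\to\A/\C$ versus $\C\to\A/\B$, with the powers of $p$ appearing only at the final step when one matches the targets with $\Li_G^{p-1}$ and $\Li_{G^D}^{p-1}$. No density/descent argument on the ordinary locus is needed.
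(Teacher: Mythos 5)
Your proof is correct and follows essentially the same route as the paper: apply Proposition \ref{dual} to $(\A,\B,\C)=(\E,\F,\wF)$ after rewriting the two Hasse maps as the natural maps $\F\to\E/\wF$ and $\wF\to\E/\F$, then match the targets with $\Li_G^{p-1}$ and $\Li_{G^D}^{p-1}$. The only step you leave implicit --- which the paper spells out --- is that $ha(G^D)^{\vee}$ and $ha(G^D)$ induce the same section under the canonical isomorphism $(\det \E/\F)\otimes(\det\wF)^{-1}\simeq(\det\wF^{\vee})\otimes(\det(\E/\F)^{\vee})^{-1}$, i.e.\ that a map and its dual have the same determinant.
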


\begin{proof}
We apply the previous proposition to $\A = \E$, $\B = \F$ and $\C = \wF$. Since $(\det \E / \wF) \otimes~(\det \F)^{-1} \simeq~\Li_G^{p-1}$ and $(\det \E / \F) \otimes (\det \wF)^{-1} \simeq \Li_{G^D}^{p-1}$, the first result follows. \\
From the previous lemma, $ha(G)$ is the section of the invertible sheaf $(\det \E / \wF) \otimes (\det \F)^{-1}$ obtained by taking the determinant of the natural map $\F \to \E / \wF$. Similarly, $ha(G^D)^\vee$ is the section of the invertible sheaf $(\det \E / \F) \otimes (\det \wF)^{-1}$ obtained by taking the determinant of the natural map $\wF \to \E / \F$. Note that $ha(G^D)^\vee$ and $ha(G^D)$ induces the same section under the canonical isomorphism of sheaves
$$(\det \E / \F) \otimes (\det \wF)^{-1} \simeq  (\det \wF^\vee) \otimes (\det (\E / \F)^\vee)^{-1}$$
The second part of the proposition allows us to conclude.
\end{proof}

\subsection{Refined partial Hasse invariants}

Suppose now that $G$ has an action of $O_F$, and that there exist adequate filtrations for $G$. We keep the notations from section $\ref{sec1}$. The goal of this section is to prove the compatibility of the sections $h_i^{[j]}$ with duality. The contravariant Dieudonn\'e crystal of $G^D$ evaluated at $S$ is $\mathcal{E}^\vee$. We have a decomposition $\E^\vee = \oplus_{i=1}^f \E_i^\vee$; the Verschiebung and Frobenius on $\E^\vee$ are given respectively by $F_i^\vee : \E_i^\vee \to (\E_{i-1}^\vee)^{(p)}$ and $V_i^\vee : (\E_{i-1}^\vee)^{(p)} \to \E_i^\vee$ for $1 \leq i \leq f$. Let $i$ be an integer between $1$ and $f$. The Hodge filtration is given by $\F_i^\bot \subset \E_i^\vee$, and the conjugate filtration by $\wF_i^\bot \subset \E_i^\vee$. These two sheaves are locally free of rank respectively $h - d_i=:d_i'$ and $d_{i-1}$. The filtrations on $\E_i$ will induce filtrations on $\E_i^\vee$. Let us start with a lemma.

\begin{lemm}
Let $1 \leq i \leq f$ be an integer. Let $\mathcal{A} \subset \mathcal{F}_{i-1}$ be a locally free sheaf, and let $\widetilde{\A} := V_i^{-1} (\A^{(p)})$. Then $(\widetilde{\A})^\bot = V_i^\vee ( (\A^\bot)^{(p)})$. \\
Let $\mathcal{B} \supset \mathcal{F}_{i-1}$ be a locally free sheaf, and let $\widetilde{\B} := F_i (\B^{(p)})$. Then $(\widetilde{\B})^\bot = (F_i^\vee)^{-1} ( (\B^\bot)^{(p)})$.
\end{lemm}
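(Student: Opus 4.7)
The plan is to use the adjunction identities $V_i^\vee(\phi)(y) = \phi(V_i(y))$ and $F_i^\vee(x)(z) = x(F_i(z))$, together with the standard relations $\wF_i = \Ker V_i = \I F_i$ and $\F_{i-1}^{(p)} = \I V_i = \Ker F_i$ recalled at the beginning of Section \ref{sec1}.

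The second identity is a direct chase of equivalences: for $x \in \E_i^\vee$, by adjunction $x$ vanishes on $\widetilde{\B} = F_i(\B^{(p)})$ if and only if $F_i^\vee(x)$ vanishes on $\B^{(p)}$, if and only if $F_i^\vee(x) \in (\B^\bot)^{(p)}$, if and only if $x \in (F_i^\vee)^{-1}((\B^\bot)^{(p)})$. No rank count is needed, the equivalences are purely formal.

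For the first identity I would prove the two inclusions separately. The inclusion $V_i^\vee((\A^\bot)^{(p)}) \subset \widetilde{\A}^\bot$ is immediate from the adjunction: for $\phi \in (\A^\bot)^{(p)}$ and $y \in \widetilde{\A}$, $V_i^\vee(\phi)(y) = \phi(V_i(y)) = 0$ since $V_i(y) \in \A^{(p)}$ and $\phi$ vanishes on $\A^{(p)}$. For the reverse inclusion, take $x \in \widetilde{\A}^\bot$. Since $\wF_i = \Ker V_i \subset \widetilde{\A}$, $x$ vanishes on $\wF_i$, hence $x \in \wF_i^\bot$. Because $\F_{i-1}^{(p)} = \I V_i$ is locally a direct factor of $\E_{i-1}^{(p)}$, dualising the factorisation $\E_i \twoheadrightarrow \F_{i-1}^{(p)} \hookrightarrow \E_{i-1}^{(p)}$ identifies the image of $V_i^\vee$ with $\wF_i^\bot$. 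Therefore $x = V_i^\vee(\phi)$ for some $\phi \in (\E_{i-1}^\vee)^{(p)}$, and it remains to show that $\phi$ can be chosen in $(\A^\bot)^{(p)}$: for any $a \in \A^{(p)} \subset \F_{i-1}^{(p)} = \I V_i$, pick $y$ with $V_i(y) = a$; automatically $y \in V_i^{-1}(\A^{(p)}) = \widetilde{\A}$, so $\phi(a) = V_i^\vee(\phi)(y) = x(y) = 0$.

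The only non-formal step is the identification $\I V_i^\vee = \wF_i^\bot$, which uses that $\F_{i-1}$ is locally a direct factor of $\E_{i-1}$ (so the exact sequence defining the image of $V_i$ is locally split); beyond that, the argument is pure bookkeeping of orthogonals and Frobenius twists, and I do not expect any substantive obstacle.
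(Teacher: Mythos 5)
Your proof is correct and follows essentially the same route as the paper: the paper also reduces to showing that $f \in (\widetilde{\A})^\bot$ iff $f = g\circ V_i$ for some $g$ vanishing on $\A^{(p)}$, working locally with free modules, and treats the $F_i$ statement as the (formal) analogue. You merely make explicit the one step the paper leaves terse, namely that $\I V_i^\vee = \wF_i^\bot$ because $\F_{i-1}^{(p)} = \I V_i$ is locally a direct factor.
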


\begin{proof}
One can work locally and assume that $S = \Spec R$ for some ring $R$, and that all the sheaves are free $R$-modules. Let $f \in \E_i^\vee$. Then $f \in (\widetilde{\A})^\bot$ if and only if $f=0$ when restricted to $\widetilde{\A} = V_i^{-1} (\A^{(p)})$. This is equivalent to the fact that $f = g \circ V_i$, for some $g \in (\E_{i-1}^\vee)^{(p)}$ with $g=0$ when restricted to $\A^{(p)}$. This last condition is equivalent to $g \in (\A^\bot)^{(p)}$. Thus
$$f \in (\widetilde{\A})^\bot \Leftrightarrow f = V_i^\vee (g) \text{ for some } g \in (\A^\bot)^{(p)}$$
The proof of the second assertion is similar.
\end{proof}

\noindent The adequate filtrations give a filtration on $\E_i^\vee$ for each $1 \leq i \leq f$ : 
$$0 \subset (\F_i^{[s(i)+1]})^\bot \subset \dots \subset (\F_i^{[r]})^\bot \subset \F_i^\bot \subset (\F_i^{[1]})^\bot \subset \dots \subset (\F_i^{[s(i)-1]})^\bot \subset \E_i^\vee$$
We also have the filtration 
$$0 \subset (\wF_i^{[1]})^\bot \subset \dots \subset (\wF_i^{[r]})^\bot \subset \E_i^\vee$$
From the previous lemma, these two filtrations are compatible in the sense that $(\wF_i^{[j]})^\bot = (F_i^\vee)^{-1} ({(\F_{i-1}^{[j]})^{\bot}}^{(p)})$ for $1 \leq j \leq s(i-1)$, and $(\wF_i^{[j]})^\bot = V_i^\vee ({(\F_{i-1}^{[j]})^{\bot}}^{(p)})$ for $s(i-1) +1 \leq j \leq r$.

\begin{prop}
The filtrations on $\E_i^\vee$, $1 \leq i \leq f$, are adequate.
\end{prop}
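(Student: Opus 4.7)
The plan is to verify both Hypothesis~\ref{fil} and Hypothesis~\ref{adequate} for the filtrations on $\E_i^\vee$ displayed in the preceding paragraphs, the combinatorial data for $G^D$ being transferred as follows: the dimensions are $d_i':=h-d_i$, the distinct nontrivial values are $\dd_j':=h-\dd_{r+1-j}$, and the middle index is $s'(i)=r+1-s(i)$. The whole verification will rest on the principle that taking orthogonals in $\E_i$ reverses inclusions, subtracts ranks from $h$, and preserves the ``locally a direct factor'' property; no further geometric input is needed.

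First, I would identify the Hodge refinement of $\E_i^\vee$ by setting $(\F_i^\bot)^{[j]}:=(\F_i^{[r+1-j]})^\bot$ for $1\le j\le r$ with $j\ne s'(i)$, with the convention $(\F_i^\bot)^{[s'(i)]}:=0$ at the middle index (note that this differs from the orthogonal $(\F_i^{[s(i)]})^\bot=\E_i^\vee$, but the middle index is never part of any stated rank condition or inclusion, so this mismatch is irrelevant). A direct rank computation gives that $(\F_i^{[r+1-j]})^\bot$ has rank $d_i'-\dd_j'$ when $j\le s'(i)-1$ and rank $h+d_i'-\dd_j'$ when $j\ge s'(i)+1$, which are precisely the ranks required by Hypothesis~\ref{fil}; the local direct factor condition transfers to the orthogonal.

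Next, I would identify the conjugate refinement of $\E_i^\vee$ constructed for $G^D$ (whose Verschiebung is $F_i^\vee$ and whose Frobenius is $V_i^\vee$) with the orthogonals of the conjugate refinement of $\E_i$. Applying the lemma preceding the proposition case by case---using $\widetilde\B=F_i(\B^{(p)})$ with $\B=\F_{i-1}^{[k]}$ when $k>s(i-1)$, and $\widetilde\A=V_i^{-1}(\A^{(p)})$ with $\A=\F_{i-1}^{[k]}$ when $k\le s(i-1)$---and comparing the outcome with the definition of the conjugate refinement applied to $G^D$, I obtain $(\wF_i^\bot)^{[j]}=(\wF_i^{[r+1-j]})^\bot$ for every $1\le j\le r$, including the middle index $s'(i-1)=r+1-s(i-1)$, where both sides equal $\wF_i^\bot$.

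Finally, Hypothesis~\ref{adequate} for $G^D$ follows by orthogonalising Hypothesis~\ref{adequate} for $G$. Under the change of variable $k:=r+1-j$, the required inclusion $(\F_i^\bot)^{[j]}\subset(\wF_i^\bot)^{[j]}$ for $1\le j\le s'(i)-1$ translates to $\wF_i^{[k]}\subset\F_i^{[k]}$ for $s(i)+1\le k\le r$, and the required inclusion $(\wF_i^\bot)^{[j]}\subset(\F_i^\bot)^{[j]}$ for $s'(i)+1\le j\le r$ translates to $\F_i^{[k]}\subset\wF_i^{[k]}$ for $1\le k\le s(i)-1$; both are exactly Hypothesis~\ref{adequate} for $G$. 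The main obstacle is essentially bookkeeping: one has to carry the flip $j\leftrightarrow r+1-j$ consistently through every step, keep track that orthogonality swaps the two halves of the filtration around the middle index ($s'(i)$ for $G^D$ versus $s(i)$ for $G$), and correctly match the two cases of the preceding lemma with the two cases in the construction of the conjugate refinement for $G^D$.
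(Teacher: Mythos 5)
Your proof is correct and follows essentially the same route as the paper's: one checks the ranks of the orthogonal sheaves for Hypothesis~\ref{fil} and uses that taking orthogonals reverses the inclusions of Hypothesis~\ref{adequate}, with the identification of the conjugate refinement of $\E_i^\vee$ coming from the lemma on $V_i^\vee$ and $F_i^\vee$. The only difference is presentational: you carry the relabelling $j\mapsto r+1-j$ and the middle-index convention explicitly, whereas the paper leaves this bookkeeping implicit.
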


\begin{proof}
Note that the set $\{d_i', 1 \leq i \leq f \} \cap [1,h-1]$ consists in $h-\dd_r < \dots < h-\dd_1$. Let us denote $h-\dd_j$ by $\dd_j'$. Let $1 \leq i \leq f$, and let $s(i)+1 \leq j \leq r$ be an integer. The sheaf $(\F_i^{[j]})^\bot$ is locally free of rank $\dd_j - d_i = d_i'-\dd_j'$. If $1 \leq j \leq s(i)-1$, then the sheaf $(\F_i^{[j]})^\bot$ is locally free of rank $h - d_i +\dd_j = h + d_i'-\dd_j'$. This proves that the first hypothesis is satisfied. \\
If $1 \leq j \leq s(i)-1$, the inclusion $\F_i^{[j]} \subset \wF_i^{[j]}$ implies that $(\wF_i^{[j]})^\bot \subset (\F_i^{[j]})^\bot$. Similarly, one has $(\F_i^{[j]})^\bot \subset (\wF_i^{[j]})^\bot$ for $s(i)+1 \leq j \leq r$. This allows us to check the second hypothesis.
\end{proof}

\noindent For each $1 \leq i \leq f$ and $1 \leq j \leq r$, we thus have invertible sheaves $\Li_{G^D,i}^{[j]}$, and sections $h_{G^D,i}^{[j]}$ of $(\Li_{G^D,i-1}^{[j]})^p (\Li_{G^D,i}^{[j]})^{-1}$.

\begin{theo} \label{dual2}
Let $1 \leq i \leq f$, and $1 \leq j \leq r$ be integers. We have an isomorphism 
$$(\Li_{G^D,i-1}^{[r+1-j]})^p (\Li_{G^D,i}^{[r+1-j]})^{-1} \simeq (\Li_{i-1}^{[j]})^p (\Li_{i}^{[j]})^{-1}$$
Using this isomorphism, one has the equality $h_{G^D,i}^{[r+1-j]} = h_i^{[j]}$.
\end{theo}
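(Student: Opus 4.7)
The plan is to extend the argument used for Theorem \ref{dual1}: Proposition \ref{dual} will rewrite both $h_i^{[j]}$ and $h_{G^D,i}^{[r+1-j]}$ as determinants of two natural maps that turn out to be Pontryagin transposes of one another, thereby producing simultaneously the isomorphism of invertible sheaves and the equality of sections.

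First, I would verify the orthogonality compatibility of the filtrations. Since the nonzero jumps for $G^D$ are $h-\dd_r < \dots < h-\dd_1$, one has $\dd'_k = h-\dd_{r+1-k}$ and $s_{G^D}(i) = r+1-s(i)$. Combining the lemma above with the definitions of $\F_{G^D,i}^{[k]}$ and $\wF_{G^D,i}^{[k]}$, one obtains
\[ \F_{G^D,i}^{[r+1-j]} = (\F_i^{[j]})^\bot, \qquad \wF_{G^D,i}^{[r+1-j]} = (\wF_i^{[j]})^\bot \]
for all $1 \leq i \leq f$ and $1 \leq j \leq r$.

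Next, split into the cases $j \leq s(i)$ (so $r+1-j \geq s_{G^D}(i)$) and $j > s(i)$, which are handled symmetrically. In the first case, apply Proposition \ref{dual} to $\A = \E_i/\F_i^{[j]}$, $\B = \F_i/\F_i^{[j]}$, $\C = \wF_i^{[j]}/\F_i^{[j]}$ on the $G$-side (the rank condition is easily checked, and the inclusion $\F_i^{[j]} \subset \wF_i^{[j]}$ is precisely Hypothesis \ref{adequate}). This yields
\[ h_i^{[j]} = \det\bigl(\wF_i^{[j]}/\F_i^{[j]} \to \E_i/\F_i\bigr) \]
under the canonical isomorphism of invertible sheaves provided by Proposition \ref{dual}. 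Apply Proposition \ref{dual} to $\A_D = (\F_i^{[j]})^\bot$, $\B_D = (\wF_i^{[j]})^\bot$, $\C_D = \F_i^\bot$ on the $G^D$-side; the two directions of the proposition cover the subcases $j < s(i)$ (where the defining map of $h_{G^D,i}^{[r+1-j]}$ is $\B_D \to \A_D/\C_D$) and $j = s(i)$ (where $(\F_i^{[s(i)]})^\bot = \E_i^\vee$ and the defining map is already $\C_D \to \A_D/\B_D$). In either subcase one obtains
\[ h_{G^D,i}^{[r+1-j]} = \det\bigl(\F_i^\bot \to (\F_i^{[j]})^\bot/(\wF_i^{[j]})^\bot\bigr). \]
Via the canonical identifications $\F_i^\bot \simeq (\E_i/\F_i)^\vee$ and $(\F_i^{[j]})^\bot/(\wF_i^{[j]})^\bot \simeq (\wF_i^{[j]}/\F_i^{[j]})^\vee$, this second map is exactly the Pontryagin transpose of the first. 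Since a morphism and its transpose have equal determinants under the canonical isomorphism of their target line bundles, the two sections coincide. The case $j > s(i)$ is handled identically after replacing the triple $(\A,\B,\C)$ by $(\F_i^{[j]},\F_i,\wF_i^{[j]})$ on the $G$-side and taking the analogous orthogonal version on the $G^D$-side.

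The main obstacle is the bookkeeping of indices: the branch ($\leq s(i)$ versus $>s(i)$) itself flips under $j \mapsto r+1-j$, and one must check in each case that the filtration-theoretic data on the $G^D$-side really produces the transpose of the $G$-side map after applying Proposition \ref{dual}. Once these identifications are correctly in place, the remainder of the proof reduces to the linear-algebraic observation that $\det\phi = \det\phi^\vee$.
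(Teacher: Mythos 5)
Your proposal is correct and follows essentially the same route as the paper: identify the orthogonal filtrations on $\E_i^\vee$ with the adequate filtrations of $G^D$, then recognize $H_{G^D,i}^{[r+1-j]}$ as the transpose of $H_i^{[j]}$ up to the swap of $\B$ and $\C$ handled by Proposition \ref{dual}. The only difference is organizational — the paper dualizes directly for $j \neq s(i)$ and invokes Proposition \ref{dual} only at $j = s(i)$, whereas you apply Proposition \ref{dual} on both sides uniformly before transposing — but the ingredients and the logic are the same.
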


\begin{proof}
First assume that $j < s(i)$. The application $H_i^{[j]}$ is the natural map $\F_i / \F_i^{[j]} \to \E_i / \wF_i^{[j]}$. The application $H_{G^D,i}^{[r+1-j]}$ is the natural map $(\wF_i^{[j]})^\bot \to (\F_i^{[j]})^\bot / \F_i^\bot$ and is thus equal to the dual of $H_i^{[j]}$. Since a map and its dual induce the same section of the same sheaf, the result follows. \\
The case $j > s(i)$ is similar (or can be treated by duality). We are left with the case $j=s(i)$. The map $H_i^{[s(i)]}$ is the natural map $\F_i \to \E_i / \wF_i^{[s(i)]}$, and the dual of $H_{G^D,i}^{[r+1-s(i)]}$ is the natural map $\wF_i^{[s(i)]} \to \E_i / \F_i$. The result then follows from Proposition $\ref{dual}$.
\end{proof}

\begin{rema}
In the ordinary case, this last result simply says that the unramified partial Hasse invariants are compatible with duality.
\end{rema}

\begin{rema}
Using section \ref{mu_ha}, one finds that the $\mu$-ordinary partial Hasse invariants are compatible with duality. This gives another, much simpler, proof of this result obtained in \cite{He_ha} section 10.
\end{rema}

We also have the following compatibility for the sections $(m_i^{[j]})_{i,j}$ and $(n_i^{[j]})_{i,j}$. For $1 \leq i \leq f$ and $1 \leq j \leq r+1-s(i)$, we have an invertible sheaf $\mathcal{M}_{G^D,i}^{[j]}$, and a section $m_{G^D,i}^{[j]}$ of this sheaf. If $1 \leq i \leq f$ and $r+1-s(i) \leq j \leq r$, we have an invertible sheaf $\mathcal{N}_{G^D,i}^{[j]}$, and a section $N_{G^D,i}^{[j]}$ of this sheaf.

\begin{prop}
Let $1 \leq i \leq f$, and $1 \leq j \leq r$ be integers. If $j \leq s(i)$, we have an isomorphism $\mathcal{M}_i^{[j]} \simeq \mathcal{N}_{G^D,i}^{[r+1-j]}$; under this isomorphism, one has $m_i^{[j]} = n_{G^D,i}^{[r+1-j]}$. If $j \geq s(i)$, we have an isomorphism $\mathcal{N}_i^{[j]} \simeq \mathcal{M}_{G^D,i}^{[r+1-j]}$; under this isomorphism, one has $n_i^{[j]} = m_{G^D,i}^{[r+1-j]}$.
\end{prop}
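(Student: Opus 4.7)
The plan is to adapt the strategy of Theorem $\ref{dual2}$ to the finer sections $m_i^{[j]}$ and $n_i^{[j]}$. First I would record the explicit dictionary between the adequate filtrations of $G$ and of $G^D$: writing $s'(i) := r+1-s(i)$ for the centre index on the dual side, I would verify that, away from the boundary values, $\F_{G^D,i}^{[r+1-j]} = (\F_i^{[j]})^\bot$ and $\wF_{G^D,i}^{[r+1-j]} = (\wF_i^{[j]})^\bot$. The first identification is immediate from the relabeling of the filtration on $\E_i^\vee$ recorded in the previous subsection, and the second is essentially the content of the lemma preceding Theorem $\ref{dual2}$.

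Next I would treat the generic case, say $j<s(i)$, corresponding to $\mathcal{M}_i^{[j]}$. For any pair of locally direct factors $\A \subset \B$ of $\E_i$, the short exact sequence $0\to \A\to \B\to \B/\A\to 0$ dualises to give a canonical isomorphism $\A^\bot/\B^\bot \simeq (\B/\A)^\vee$. Applied to $\F_i^{[j]} \subset \F_i^{[j-1]}$ and to $\wF_i^{[j]} \subset \wF_i^{[j-1]}$, this produces the required isomorphism $\mathcal{M}_i^{[j]} \simeq \mathcal{N}_{G^D,i}^{[r+1-j]}$. Under these identifications the natural map $\F_i^{[j-1]}/\F_i^{[j]} \to \wF_i^{[j-1]}/\wF_i^{[j]}$, whose determinant is $m_i^{[j]}$, becomes the transpose of the natural map $(\wF_i^{[j]})^\bot/(\wF_i^{[j-1]})^\bot \to (\F_i^{[j]})^\bot/(\F_i^{[j-1]})^\bot$, whose determinant is $n_{G^D,i}^{[r+1-j]}$. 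Since transposition preserves determinants as sections of canonically isomorphic line bundles, this gives $m_i^{[j]} = n_{G^D,i}^{[r+1-j]}$. The case $j>s(i)$ is handled identically after swapping the roles of the Hodge and conjugate filtrations.

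The boundary case $j=s(i)$ is the step I expect to be the main obstacle, since the conventions $\F_i^{[s(i)]}=0$ and $\F_{G^D,i}^{[s'(i)]}=0$ collapse one of the subquotients and the subquotient-duality recipe above no longer applies verbatim. For this I would invoke Proposition $\ref{dual}$ exactly as in the endgame of Theorem $\ref{dual2}$: applied to $\A = \wF_i^{[s(i)-1]}$, $\B=\F_i^{[s(i)-1]}$, $\mathcal{C}=\wF_i^{[s(i)]}$ (whose ranks sum to the rank of $\A$), it rewrites the determinant of the map $\F_i^{[s(i)-1]} \to \wF_i^{[s(i)-1]}/\wF_i^{[s(i)]}$ defining $m_i^{[s(i)]}$ as the determinant of a second natural map, which under the filtration dictionary is exactly the map whose determinant gives $n_{G^D,i}^{[s'(i)]}$. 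The symmetric triple handles the companion statement $n_i^{[s(i)]} = m_{G^D,i}^{[s'(i)]}$, and together with the generic cases this exhausts all $(i,j)$ and completes the proof.
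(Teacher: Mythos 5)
Your treatment of the generic indices is exactly the paper's argument: the map $\F_i^{[j-1]}/\F_i^{[j]} \to \wF_i^{[j-1]}/\wF_i^{[j]}$ defining $m_i^{[j]}$ is the transpose of the map $(\wF_i^{[j]})^\bot/(\wF_i^{[j-1]})^\bot \to (\F_i^{[j]})^\bot/(\F_i^{[j-1]})^\bot$ defining $n_{G^D,i}^{[r+1-j]}$, and a map and its transpose give the same section of canonically isomorphic line bundles. Where you diverge is at $j=s(i)$, and there your caution is misplaced: the conventions were set up precisely so that the subquotient dictionary applies verbatim. Indeed $\F_i^{[s(i)]}=0$ gives $(\F_i^{[s(i)]})^\bot=\E_i^\vee$, and the target $\E_i^\vee/\F_{G^D,i}^{[r+1-s(i)+1]}$ in the special definition of $\mathcal{N}_{G^D,i}^{[r+1-s(i)]}$ is exactly $(\F_i^{[s(i)]})^\bot/(\F_i^{[s(i)-1]})^\bot$; the transpose of the defining map is then again $\F_i^{[s(i)-1]} \to \wF_i^{[s(i)-1]}/\wF_i^{[s(i)]}$, i.e.\ the map defining $m_i^{[s(i)]}$. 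The paper therefore handles all $1\le j\le s(i)$ in a single stroke, with no case distinction. Your detour through Proposition \ref{dual} at the boundary can be repaired but misfires as stated: with $\A=\wF_i^{[s(i)-1]}$, $\B=\F_i^{[s(i)-1]}$, $\C=\wF_i^{[s(i)]}$, the proposition trades $m_i^{[s(i)]}$ for the determinant of $\wF_i^{[s(i)]}\to\wF_i^{[s(i)-1]}/\F_i^{[s(i)-1]}$, and this second map is \emph{not} the map defining $n_{G^D,i}^{[r+1-s(i)]}$ under the filtration dictionary (that map's transpose is the $m$-map itself, as noted above); to identify it you would need a further transposition, or a second application of Proposition \ref{dual} on the dual side. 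So your argument still owes one step at the boundary, whereas the direct transpose argument you already used for $j<s(i)$ closes the case immediately.
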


\begin{proof}
Let $1 \leq i \leq f$, and $1 \leq j \leq s(i)$. The section $m_i^{[j]}$ is induced by the determinant of the map
$$\F_i^{[j-1]} / \F_i^{[j]} \to \wF_i^{[j-1]} / \wF_i^{[j]}$$
The section $n_{G^D,i}^{[r+1-j]}$ is induced by the determinant of the map
$$(\wF_i^{[j]})^\bot / (\wF_i^{[j-1]})^\bot \to (\F_i^{[j]})^\bot / (\F_i^{[j-1]})^\bot$$
But the dual of this map is precisely the map $\F_i^{[j-1]} / \F_i^{[j]} \to \wF_i^{[j-1]} / \wF_i^{[j]}$. Since a map and its dual induce the same section of the same sheaf, the first part of the proposition follows. The second part is similar.
\end{proof}

\section{Properties of adequate filtrations}

\subsection{Existence}

\indent In this section, we prove that locally, adequate filtrations always exist. Let $S = \Spec R$, where $R$ is a ring of characteristic $p$ such that if $x,y \in R$, then $x$ divides $y$ or $y$ divides $x$ (for example, $R$ can be a valuation ring of characteristic $p$). In particular, $R$ is local. Let $G$ be a $p$-divisible group over $S$, endowed with an action of $O_F$ as in the previous section. The module $\omega_G$ thus decomposes into $\omega_G = \oplus_{i=1}^f \omega_{G,i}$, and let $d_i$ be the rank of $\omega_{G,i}$ for all $1 \leq i \leq f$. We will keep the same notations as in the previous section.

\begin{theo}
There exist adequate filtrations on the spaces $(\E_i)_{1 \leq i \leq f}$.
\end{theo}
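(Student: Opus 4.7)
The plan is to construct the filtrations one \emph{level} at a time. Fix $j \in \{1,\dots,r\}$; the adequacy condition of Hypothesis \ref{adequate} couples $\F_i^{[j]}$ only to $\F_{i-1}^{[j]}$ through the definition of $\wF_i^{[j]}$, so the problem decouples across levels and can be handled separately for each $j$. Within a fixed level $j$, the indices $i$ with $s(i)=j$ serve as anchors, since the convention $\F_i^{[s(i)]}=0$ forces the level-$j$ piece to vanish there; at least one such anchor always exists by the very definition of $\dd_j$.

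Starting from an anchor $i_0$ and propagating along the cycle $\mathbb{Z}/f\mathbb{Z}$, I would construct $\F_i^{[j]}$ inductively. Once $\F_{i-1}^{[j]}$ has been fixed, the sheaf $\wF_i^{[j]}$ is determined (as $V_i^{-1}((\F_{i-1}^{[j]})^{(p)})$ or $F_i((\F_{i-1}^{[j]})^{(p)})$ according to the regime), and $\F_i^{[j]}$ must be a local direct factor of $\E_i$ of the prescribed rank, contained in $\F_i \cap \wF_i^{[j]}$ when $s(i)>j$, or containing $\F_i + \wF_i^{[j]}$ when $s(i)<j$. The existence of such a direct factor is furnished by the hypothesis on $R$: since any two elements of $R$ are comparable for the divisibility order, $R$ admits a Smith-normal-form theory for homomorphisms between free modules, so that any finitely generated submodule of a free $R$-module can be adjusted, by absorbing torsion of the quotient, into a direct factor of any admissible rank. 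Applied to the restrictions of $V_i$ and $F_i$ to the relevant subsheaves, this produces a basis of $\E_i$ adapted to the intersection or sum above, from which the piece $\F_i^{[j]}$ is read off.

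The main obstacle is to verify that at each step the rank of the candidate $\F_i \cap \wF_i^{[j]}$ (resp.\ $\F_i + \wF_i^{[j]}$) is at least (resp.\ at most) the prescribed rank $d_i - \dd_j$ (resp.\ $h + d_i - \dd_j$), so that a direct factor of the required size fits inside (resp.\ contains) it. This is a rank inequality that I would establish by tracking how the ranks of images and preimages under $V_i$ and $F_i$ propagate along the cycle starting from the anchor, using the identities $\I V_i = \F_{i-1}^{(p)}$ and $\Ker F_i = \F_{i-1}^{(p)}$. Once the inequality is in place, Hypothesis \ref{fil} holds by the rank choices, Hypothesis \ref{adequate} is built into each step by construction, and when one reaches the next anchor $i_1$ with $s(i_1)=j$ the convention $\F_{i_1}^{[j]}=0$ is already compatible with the rank constraint $d_{i_1}-\dd_j=0$, so the cycle closes automatically without any additional consistency check.
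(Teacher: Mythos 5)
Your overall skeleton coincides with the paper's: an outer induction over the levels $j$, propagation along the cycle $\mathbb{Z}/f\mathbb{Z}$ starting from an anchor $i$ with $s(i)=j$ (which exists by definition of $\dd_j$, and at which the cycle closes for free since $\F_i^{[s(i)]}=0$), and the divisibility hypothesis on $R$ used to extract a free direct factor of the prescribed rank from an intersection or a sum. However, there is a genuine gap in your claim that ``the problem decouples across levels.'' It does not: Hypothesis \ref{fil} demands that the level-$j$ piece be nested with the level-$(j-1)$ piece, namely $\F_i^{[j]} \subset \F_i^{[j-1]}$ on the side $j<s(i)$ and $\F_i^{[j]} \subset \F_i^{[j-1]}$ (with $\F_i^{[s(i)+1]}\subset \E_i$) on the side $j>s(i)$. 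If at level $j$ you only ask for a direct factor of rank $d_i-\dd_j$ inside $\F_i \cap \wF_i^{[j]}$, as you propose, nothing forces it to lie inside the previously chosen $\F_i^{[j-1]}$: already over a field, a subspace of the correct dimension of $\F_i\cap\wF_i^{[j]}$ need not be contained in a given subspace of the correct dimension of $\F_i\cap\wF_i^{[j-1]}$. The paper's construction is therefore genuinely coupled: at level $j<s(i)$ one seeks $\F_i^{[j]}$ inside $\F_i^{[j-1]}\cap\wF_i^{[j]}$, and at level $j>s(i)$ one seeks it between $\F_i+\wF_i^{[j]}$ and $\G:=\F_i^{[j-1]}$ (or $\E_i$ when $j=s(i)+1$), the containment $\wF_i^{[j]}\subset\wF_i^{[j-1]}\subset\G$ being available only because adequacy at level $j-1$ has already been arranged.

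This coupling also repairs the rank count that you flag as the ``main obstacle,'' and your plan of ``tracking ranks along the cycle'' is not how it is resolved. The point is local at each $i$: since $\F_i^{[j-1]}\subset\wF_i^{[j-1]}$ and $\wF_i^{[j]}$ has corank $\dd_j-\dd_{j-1}$ in $\wF_i^{[j-1]}$, the intersection $\F_i^{[j-1]}\cap\wF_i^{[j]}$ is cut out of the free rank-$(d_i-\dd_{j-1})$ module $\F_i^{[j-1]}$ by only $\dd_j-\dd_{j-1}$ linear conditions; over $R$ each such condition is either vacuous or implied by one of the form $e_l=\sum_{k\neq l}x'_k e_k$, so a free direct factor of rank exactly $d_i-\dd_j$ survives. (Intersecting with $\F_i$ alone and counting the $\dd_j$ conditions cutting $\wF_i^{[j]}$ out of $\E_i$ would only give rank $\geq d_i-\dd_{j-1}-\dd_j$ by this method, which is too weak.) Your appeal to Smith normal form is the right mechanism for the last step, but the induction must carry the level-$(j-1)$ filtration and its adequacy into the level-$j$ construction for the statement of Hypothesis \ref{fil} to come out.
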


\begin{proof}
We will explicitly construct adequate filtrations on the spaces $\E_i$. We will construct by induction the spaces $(\F_i^{[j]})_{1 \leq i \leq f}$, for $1 \leq j \leq r$. Let $j$ be an integer between $1$ and $r$, and assume we have constructed the spaces $\F_i^{[k]}$ for $1 \leq i \leq f$ and $1 \leq k \leq j-1$. We have to construct the space $\F_i^{[j]}$ for $1 \leq i \leq f$. Note that if $s(i)=j$, there is nothing to be done (as $\F_i^{[s(i)]}$ is defined to be $0$). This proves that there is at least one element $i$ for which the space $\F_i^{[j]}$ is constructed. Suppose that the space $\F_{i-1}^{[j]}$ is constructed, and we will construct the space $\F_i^{[j]}$. This will conclude the construction. \\
If $s(i)=j$, there is nothing to be done. Assume that $j < s(i)$. We look for a direct factor $\F_i^{[j]}$ of $\E_i$ of rank $d_i - \dd_j$ such that $\F_i^{[j]} \subset \F_i^{[j-1]} \cap \wF_i^{[j]}$, with $\F_i^{[0]} = \F_i$ by convention. The module $\F_i^{[j-1]}$ is free of rank $d_i -\dd_{j-1}$, and is included in $\wF_i^{[j-1]}$ ; let $e_1, \dots, e_{d_i - \dd_{j-1}}$ be a basis of $\F_i^{[j-1]}$. Since $\wF_i^{[j]}$ is of corank $\dd_j - \dd_{j-1}$ in $\wF_i^{[j-1]}$, it is determined by $\dd_j - \dd_{j-1}$ equations. The module $\F_i^{[j-1]} \cap \wF_i^{[j]}$ is thus determined by $\dd_j - \dd_{j-1}$ equations of the form
$$\sum_{k=1}^{d_i - \dd_{j-1}} x_k e_k = 0$$
Thanks to our assumption on the ring $R$, each of these conditions is either empty, or is implied by a stronger condition of the form
$$e_{l} = \sum_{k \neq l} x_k' e_k$$ 
The space $\F_i^{[j-1]} \cap \wF_i^{[j]}$ contains therefore a subspace which is free of rank $d_i - \dd_j$. We have thus constructed $\F_i^{[j]}$ in this case. \\
Suppose now that $j > s(i)$. Let $\G := \F_i^{[j-1]}$ if $j> s(i) + 1$, and $\G := \E_i$ if $j=s(i)+1$. We look for a direct factor $\F_i^{[j]}$ of $\E_i$ of rank $h+d_i - \dd_j$ such that $\F_i \subset \F_i^{[j]} \subset \G$ and $\wF_i^{[j]} \subset \F_i^{[j]}$. Note that we have $\wF_i^{[j]} \subset \wF_i^{[j-1]} \subset \G$. The condition on $\F_i^{[j]}$ is then
$$\F_i + \wF_i^{[j]} \subset \F_i^{[j]} \subset \G$$
Using the same argument as in the previous case, there is a free module of rank $d_i + h -\dd_j$ inside $\G$ containing both $\F_i$ and $\wF_i^{[j]}$ (the modules $\F_i$ and $\wF_i^{[j]}$ are free of rank respectively $d_i$ and $h - \dd_j$). This allows us to construct the space $\F_i^{[j]}$, and concludes the proof. 
\end{proof}

\subsection{Uniqueness}

We will now prove that the reduction of the adequate filtrations modulo a certain ideal is unique. \\
Let $K$ be a valuation field, which is an extension of $\mathbb{Q}_p$, $v$ be the valuation (normalized by $v(p)=1$), and let $O_K$ be the ring of integers. For all real $w >0$, let us define $\mathfrak{m}_w := \{x \in O_K, v(x) \geq w \}$ and $O_{K,\{w\}} := O_K / \mathfrak{m}_w$. In particular $O_{K,\{1\}} = O_K / p O_K$. If $M$ is a $O_K$-module, and $w>0$ is a real, then we will note $M_{\{w\}} := M \otimes_{O_K} O_{K,\{w\}}$. \\
Let $G$ be a $p$-divisible group defined over $O_{K,\{1\}}$, and assume that $G$ has an action of $O_F$ as before. We will keep the same notations as in the previous sections. Assume the existence of adequate filtrations $(\F_i^{[\bullet]})_{1 \leq i \leq f}$ on the spaces $(\E_i)_{1 \leq i \leq f}$. We define
$$w_{i}^{[j]} := v( h_i^{[j]})  \in [0,1]$$
for $1 \leq i \leq f$ and $1 \leq j \leq r$. More precisely, since $h_i^{[j]}$ is the section of an invertible sheaf on $\Spec O_{K,\{1\}}$, a choice of a trivialization of this sheaf allows us to see $h_i^{[j]}$ as an element of $O_{K,\{1\}}$. The valuation of this element is then independent of the choice made. We will also define 
$$w = \sum_{i=1}^f \sum_{j=1}^r w_i^{[j]}$$
Let us start with a lemma.

\begin{lemm}
Let $({\F_i^{[\bullet]}}')_{1 \leq i \leq f}$ be another adequate filtration on the spaces $(\E_i)_{1 \leq i \leq f}$. Let $1 \leq i \leq f$ and $1 \leq j \leq r$ be integers, and let $w_i^{[j]} < \alpha \leq 1$ be a real. Assume that ${\F_{k,\{\alpha\}}^{[l]}}' = \F_{k,\{\alpha\}}^{[l]}$ for $1 \leq k \leq i$ and $1 \leq l \leq j-1$, and that ${\F_{i-1,\{\alpha\}}^{[j]}}' = \F_{i-1,\{\alpha\}}^{[j]}$. Then
$${\F_{i,\{\alpha-w_i^{[j]}\}}^{[j]}}' = \F_{i,\{\alpha-w_i^{[j]}\}}^{[j]}$$
\end{lemm}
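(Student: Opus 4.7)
The plan is to work modulo $\mathfrak{m}_\alpha$, use the hypothesis to fix the ambient data and the conjugate filtration piece $\wF_i^{[j]}$, and then express the second choice of $\F_i^{[j]}$ as the graph of a linear perturbation whose image is annihilated by a quantity of valuation at most $w_i^{[j]}$. An adjugate identity will then force this perturbation to vanish modulo $\mathfrak{m}_{\alpha - w_i^{[j]}}$.

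First I would make the reduction. The hypothesis gives ${\F_{k,\{\alpha\}}^{[l]}}' = \F_{k,\{\alpha\}}^{[l]}$ for $k \leq i$, $l \leq j-1$, together with ${\F_{i-1,\{\alpha\}}^{[j]}}' = \F_{i-1,\{\alpha\}}^{[j]}$. Because $\wF_i^{[j]}$ is defined functorially from $\F_{i-1}^{[j]}$ via $V_i$ or $F_i$ (according as $j \leq s(i-1)$ or $j > s(i-1)$), this implies ${\wF_{i,\{\alpha\}}^{[j]}}' = \wF_{i,\{\alpha\}}^{[j]}$. Thus, working over $O_{K,\{\alpha\}}$, both the ambient $\F_i^{[j-1]}$ (or $\E_i$ when $j = s(i)+1$) and the constraint module $\wF_i^{[j]}$ agree for the two filtrations, and only $\F_i^{[j]}$ can differ.

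In the case $j \leq s(i)$, I would work locally and pick a direct complement $\F_i^{[j-1]} = \F_i^{[j]} \oplus T$ with $T$ free of rank $\dd_j - \dd_{j-1}$. After possibly shrinking, any rank $d_i - \dd_j$ direct summand $(\F_i^{[j]})'$ of $\F_{i,\{\alpha\}}^{[j-1]}$ is the graph of an $O_{K,\{\alpha\}}$-linear map $g : \F_{i,\{\alpha\}}^{[j]} \to T_{\{\alpha\}}$. The adequacy $(\F_i^{[j]})' \subset \wF_i^{[j]}$, combined with $\F_i^{[j]} \subset \wF_i^{[j]}$, forces the image of $g$ into the kernel of the restriction $\psi : T \to \wF_i^{[j-1]}/\wF_i^{[j]}$. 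The latter is a map between free modules of equal rank whose determinant, with respect to the chosen trivializations, equals $m_i^{[j]}$. The adjugate identity $\mathrm{adj}(\psi) \circ \psi = m_i^{[j]} \cdot \mathrm{Id}$ then gives $m_i^{[j]} \cdot \Ker \psi_{\{\alpha\}} = 0$, whence $\Ker \psi_{\{\alpha\}} \subset \mathfrak{m}_{\alpha - v(m_i^{[j]})} T_{\{\alpha\}}$. Therefore $g \equiv 0$ modulo $\mathfrak{m}_{\alpha - v(m_i^{[j]})}$, and since $v(m_i^{[j]}) \leq v(h_i^{[j]}) = w_i^{[j]}$ by the factorization $h_i^{[j]} = \prod_{k=1}^j m_i^{[k]}$, the claimed equality follows.

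For $j > s(i)$, the argument is dual in nature: after quotienting by $\F_i$ (which lies in both $\F_i^{[j]}$ and $(\F_i^{[j]})'$), both $\F_i^{[j]}/\F_i$ and $(\F_i^{[j]})'/\F_i$ become rank $h - \dd_j$ direct summands of $\F_i^{[j-1]}/\F_i$ (or $\E_i/\F_i$ at the boundary $j = s(i)+1$) containing the common image of $\wF_i^{[j]}$. The analogous graph decomposition reduces the problem to the behaviour of the map $H_i^{[j]} : \wF_i^{[j]} \to \F_i^{[j]}/\F_i$ whose determinant is exactly $h_i^{[j]}$, and an identical adjugate argument gives the same conclusion. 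The main obstacle I anticipate is correctly identifying, in this dual setting, the source and target of the perturbation $g$ so that its image is genuinely killed by $h_i^{[j]}$; the boundary case $j = s(i)+1$ (where the ambient jumps to $\E_i$) requires separate bookkeeping. Alternatively, one could try to invoke Theorem $\ref{dual2}$ to transfer the case $j > s(i)$ to the already-proved case on $G^D$, but some care is needed since the duality reverses the indexing $j \mapsto r+1-j$ and therefore the two induction orders do not directly line up.
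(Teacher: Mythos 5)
Your treatment of the case $j \leq s(i)$ is essentially the paper's argument: the paper likewise reduces to the map $\phi : \F_{i,\{\alpha\}}^{[j-1]} \to \wF_{i,\{\alpha\}}^{[j-1]}/\wF_{i,\{\alpha\}}^{[j]}$, whose matrix is $(0\ M)$ with $v(\det M) \leq w_i^{[j]}$, and kills the complementary coordinates of any element of $\Ker\phi$ by multiplying by the adjugate of $M$. One caveat: you should not assume that ${\F_i^{[j]}}'$ is the \emph{graph} of a map $g : \F_i^{[j]} \to T$ --- that presupposes that ${\F_i^{[j]}}'$ projects onto $\F_i^{[j]}$, which is not known a priori, and ``after possibly shrinking'' does not supply it. The fix is what the paper does: show directly that every element of ${\F_{i,\{\alpha\}}^{[j]}}'$ has $T$-component of valuation at least $\alpha - w_i^{[j]}$, giving an inclusion of direct summands of equal rank over the local ring $O_{K,\{\alpha-w_i^{[j]}\}}$, hence equality.

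The case $j > s(i)$ is where your proposal has a genuine gap, and the obstacle you flag is exactly the point: here the constraint is that ${\F_{i,\{\alpha\}}^{[j]}}'/\F_{i,\{\alpha\}}$ \emph{contains} the image of $\psi : \wF_{i,\{\alpha\}}^{[j]} \to \G_{\{\alpha\}}/\F_{i,\{\alpha\}}$, rather than being contained in a kernel, so the identity $\mathrm{adj}(\psi)\circ\psi = \det(\psi)\cdot\mathrm{Id}$ cannot be applied to elements of the unknown module, and the argument is not ``identical.'' The paper's resolution is a small but essential trick: write the matrix of $\psi$ as $\left(\begin{smallmatrix} N \\ 0 \end{smallmatrix}\right)$ with respect to the decomposition $\F_{i,\{\alpha\}}^{[j]}/\F_{i,\{\alpha\}} \subset \G_{\{\alpha\}}/\F_{i,\{\alpha\}}$, where $v(\det N) = w_i^{[j]}$; take a basis matrix $(X_1\ X_2)$ of ${\F_{i,\{\alpha\}}^{[j]}}'$; the image containment produces a matrix $Z$ with $N = X_1 Z$ and $X_2 Z = 0$, whence $v(\det Z) \leq v(\det N) = w_i^{[j]}$, and multiplying $X_2 Z = 0$ on the right by $\mathrm{adj}(Z)$ gives $\det(Z)\, X_2 = 0$, so the coefficients of $X_2$ have valuation at least $\alpha - w_i^{[j]}$. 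Your alternative of transferring to $G^D$ via Theorem \ref{dual2} does run into the reindexing $j \mapsto r+1-j$ clashing with the induction order, as you anticipate, so the direct argument above is the one to supply.
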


\begin{proof}
If $s(i)=j$, the result is obvious. Assume that $j < s(i)$. The space ${\F_{i,\{\alpha\}}^{[j]}}'$ lies inside the kernel of the map
$$\phi : \F_{i,\{\alpha\}}^{[j-1]} \to \wF_{i,\{\alpha\}}^{[j-1]} / \wF_{i,\{\alpha\}}^{[j]} $$
The matrix of this map is of the form
\begin{displaymath}
\left(\begin{array}{cc}
0 & M
\end{array}\right)
\end{displaymath}
where this matrix is written with respect to the decomposition $\F_{i,\{\alpha\}}^{[j]} \subset \F_{i,\{\alpha\}}^{[j-1]}$. The matrix $M$ is associated to the map $\F_{i,\{\alpha\}}^{[j-1]} / \F_{i,\{\alpha\}}^{[j]} \to \wF_{i,\{\alpha\}}^{[j-1]} / \wF_{i,\{\alpha\}}^{[j]}$. In particular $v(\det M) \leq w_i^{[j]}$. Let 
$x = \begin{pmatrix} x_1 \\ x_2 \end{pmatrix}$ 
be in the kernel of $\phi$. Then $M x_2 = 0$ so the coordinates of $x_2$ are of valuation greater than $\alpha - w_i^{[j]}$. This implies that 
$${\F_{i,\{\alpha-w_i^{[j]}\}}^{[j]}}' = \F_{i,\{\alpha-w_i^{[j]}\}}^{[j]}$$
Suppose now that $j > s(i)$. We keep the notations from the previous section. The space ${\F_{i,\{\alpha\}}^{[j]}}' / \F_{i,\{\alpha\}}$ contains the image of the map
$$\psi : \wF_{i,\{\alpha\}}^{[j]} \to \G_{\{\alpha\}} / \F_{i,\{\alpha\}} $$
The matrix of this map is of the form
\begin{displaymath}
\left(\begin{array}{c}
N \\
0
\end{array}\right)
\end{displaymath}
where this matrix is written with respect to the decomposition $\F_{i,\{\alpha\}}^{[j]}  / \F_{i,\{\alpha\}} \subset \G_{\{\alpha\}} / \F_{i,\{\alpha\}}$. The matrix $N$ is associated to the map $\wF_{i,\{\alpha\}}^{[j]} \to \F_{i,\{\alpha\}}^{[j]} / \F_{i,\{\alpha\}}$. In particular $v(\det N) = w_i^{[j]}$. Now let $(X_1 X_2)$ be the matrix of a basis of ${\F_{i,\{\alpha\}}^{[j]}}'$. Since this space contains the image of $\psi$, there exists a matrix $Z$ with $N=X_1 Z$ and $0= X_2 Z$. This implies that $v(\det Z) \leq w_i^{[j]}$ and that the coefficients of $X_2$ are of valuation greater than $\alpha - w_i^{[j]}$. This implies that 
$${\F_{i,\{\alpha-w_i^{[j]}\}}^{[j]}}' = \F_{i,\{\alpha-w_i^{[j]}\}}^{[j]}$$
\end{proof}

\begin{prop} \label{unique}
Assume that $w < 1$, and let $({\F_i^{[\bullet]}}')_{1 \leq i \leq f}$ be another adequate filtration on the spaces $(\E_i)_{1 \leq i \leq f}$. Then we have
$${\F_{i,\{1-w\}}^{[j]}}' = \F_{i,\{1-w\}}^{[j]}$$
for $1 \leq i \leq f$ and $1 \leq j \leq r$.
\end{prop}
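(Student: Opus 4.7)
The plan is to apply the previous lemma iteratively over all pairs $(i, j)$ with $1 \leq i \leq f$ and $1 \leq j \leq r$, tracking the truncation level as we go; the total drop will be exactly $w = \sum_{i,j} w_i^{[j]}$, and the hypothesis $w < 1$ keeps us from exhausting the available valuation.

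I order the pairs lexicographically by $j$: handle layer $j = 1$ first (all indices $i$), then layer $j = 2$, and so on up to $j = r$. Within each layer $j$, I exploit the fact that there is always some $i_0 = i_0(j) \in \{1, \dots, f\}$ with $s(i_0) = j$ (by the very definition of the $\dd_j$'s as the distinct values of the $d_i$'s in $[1,h-1]$), for which $\F_{i_0}^{[j]} = 0 = {\F_{i_0}^{[j]}}'$ by convention; this serves as a base case at which the two filtrations agree at every level. Starting from $i_0$ and moving cyclically $i_0 \to i_0 + 1 \to \cdots \to i_0 + f - 1 \pmod{f}$, I apply the lemma at each step: knowing agreement of $(i-1, j)$ at level $\alpha$, together with all pairs in layers $< j$ at that same level (already established in a previous outer iteration), the lemma upgrades this to agreement of $(i, j)$ at level $\alpha - w_i^{[j]}$. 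Summing the drops, after all $rf$ pairs have been traversed the accumulated loss is $\sum_{i=1}^f \sum_{j=1}^r w_i^{[j]} = w$, yielding agreement at level $1 - w$ for every pair, as required.

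The only nontrivial bookkeeping item is that the truncation level must remain positive each time we invoke the lemma, i.e.\ $\alpha > w_i^{[j]}$ at every application; but this is guaranteed a priori since $\alpha$ stays $\geq 1 - w > 0$ throughout. There is no deep obstacle here: the main conceptual content is identifying the correct lexicographic ordering and observing that each layer admits a trivial base case at some $i_0$ with $s(i_0) = j$. The cyclic structure of the existence proof (which also started from such an $i_0$ and propagated around the cycle) translates essentially verbatim into the uniqueness argument, with the lemma replacing the one-step construction.
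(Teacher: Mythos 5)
Your proposal is correct and follows essentially the same route as the paper: induction on the layer $j$, using an index $i_0$ with $s(i_0)=j$ (where $\F_{i_0}^{[j]}=0$ by convention) as the base case and propagating cyclically in $i$ via the preceding lemma, with the truncation level dropping by $w_i^{[j]}$ at each step and totalling at most $w$. The bookkeeping remarks (levels staying above $1-w>0$, lower layers already settled at a higher level) match the paper's argument.
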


\begin{proof}
We use successively the previous lemma. More precisely, we will prove by induction on $j$ that 
$${\F_{i,\{1-\alpha_j\}}^{[j]}}' = \F_{i,\{1-\alpha_j\}}^{[j]}$$
with $\alpha_j = \sum_i \sum_{k \leq j} w_i^{[k]}$ for all $1 \leq i \leq f$ and $1 \leq j \leq r$. Let $1 \leq k \leq r$, assume that the previous relation is true for $1 \leq i \leq f$ and $1 \leq j \leq k-1$. Let $i$ be an element with $s(i) = k$; then the space $\F_i^{[k]}$ needs not to be defined. From the previous proposition, we have
$${\F_{i+1,\{1-\alpha_{k-1}-w_{i+1}^{[k]}\}}^{[k]}}' = \F_{i+1,\{1-\alpha_{k-1}-w_{i+1}^{[k]}\}}^{[k]}$$
Applying successively the previous result, one gets
$${\F_{i+l,\{1-\alpha_{k-1}-w_{i+1}^{[k]}-\dots - w_{i+l}^{[k]}\}}^{[k]}}' = \F_{i+l,\{1-\alpha_{k-1}-w_{i+1}^{[k]}-\dots - w_{i+l}^{[k]}\}}^{[k]}$$
for all $1 \leq l \leq f-1$. Hence the result.
\end{proof}

\begin{rema}
We could have replaced $w$ by 
$$\sum_{i=1}^f \sum_{j \neq s(i)} w_i^{[j]}$$
Of course the proposition is also valid if we replace $w$ by the valuation of the total $\mu$-ordinary Hasse invariant.
\end{rema}

\begin{coro}
Assume that $w < 1/2$. Then the elements $(w_i^{[j]})_{1 \leq i \leq f, 1 \leq j \leq r}$ do not depend on the choice of adequate filtrations, and are thus invariants of the $p$-divisible group $G$.
\end{coro}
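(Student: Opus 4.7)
The plan is to combine Proposition \ref{unique} (agreement of adequate filtrations modulo $\mathfrak{m}_{1-w}$) with the observation that each $w_i^{[j]}$ is strictly smaller than $1-w$ when $w<1/2$, so that the valuation is already detected after reduction to $O_{K,\{1-w\}}$. Concretely, fix two adequate filtrations $(\F_i^{[\bullet]})$ and $({\F_i^{[\bullet]}}')$, and let $w_i^{[j]}$, ${w_i^{[j]}}'$ denote the valuations of the corresponding refined partial Hasse invariants $h_i^{[j]}$, ${h_i^{[j]}}'$.

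First I would apply Proposition \ref{unique} to the first filtration (using that $w<1/2<1$ to ensure the hypothesis): both filtrations coincide as submodules of the $\E_i$ after tensoring with $O_{K,\{1-w\}}$. Since the section $h_i^{[j]}$ is (by Definition \ref{cons} and the surrounding discussion) the determinant of a canonically defined morphism between locally free quotients of the filtration pieces, this construction is functorial in reductions of the filtration. Therefore $h_i^{[j]}$ and ${h_i^{[j]}}'$ have the same image in $O_{K,\{1-w\}}$, once the invertible sheaves are trivialized.

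Next I would exploit the inequality $w_i^{[j]}\leq w<1-w$. On one hand, this forces $h_i^{[j]}\not\equiv 0\pmod{\mathfrak{m}_{1-w}}$, and its valuation in $O_{K,\{1-w\}}$ equals $w_i^{[j]}$. On the other hand, the equality of reductions above forces ${h_i^{[j]}}'$ to have the same valuation $w_i^{[j]}$ in $O_{K,\{1-w\}}$, in particular ${h_i^{[j]}}'\not\equiv 0\pmod{\mathfrak{m}_{1-w}}$, so ${w_i^{[j]}}'<1-w$, and then the valuation of ${h_i^{[j]}}'$ in $O_{K,\{1\}}$ equals its valuation in $O_{K,\{1-w\}}$, namely $w_i^{[j]}$. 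Hence ${w_i^{[j]}}'=w_i^{[j]}$ for every $i,j$, which is the claim.

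The only subtlety I anticipate is the asymmetry in the hypothesis: we only assume $w<1/2$ for the first filtration, not for $w':=\sum {w_i^{[j]}}'$. But this is not an obstacle because the argument does not need to apply Proposition \ref{unique} symmetrically; the inequality $w_i^{[j]}<1-w$ is enough to recover ${w_i^{[j]}}'$ from the common reduction mod $\mathfrak{m}_{1-w}$, and only afterwards do we conclude $w'=w<1/2$ as a by-product.
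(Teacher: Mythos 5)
Your argument is correct and is essentially the paper's own proof: both invoke Proposition \ref{unique} to identify the two filtrations modulo $\mathfrak{m}_{1-w}$, deduce that the determinant maps defining $h_i^{[j]}$ and ${h_i^{[j]}}'$ agree there, and then use $w_i^{[j]}\leq w<1-w$ to conclude ${w_i^{[j]}}'=w_i^{[j]}$ (the paper phrases this last step as $\min(w_i^{[j]},1-w)=\min({w_i^{[j]}}',1-w)$). Your explicit handling of the asymmetry in the hypothesis is a nice touch but matches what the paper does implicitly.
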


\begin{proof}
Let $({\F_i^{[\bullet]}}')_{1 \leq i \leq f}$ be another adequate filtration on the spaces $(\E_i)_{1 \leq i \leq f}$, and let $({h_i^{[j]}}')_{i,j}$ be the sections computed with these filtrations. Let $1 \leq i \leq f$ and $1 \leq j \leq r$ be integers, and assume that $j \leq s(i)$, the other case being similar (or can be treated by duality). Let ${w_i^{[j]}}'$ be the valuation of ${h_i^{[j]}}'$. The elements $w_i^{[j]}$ and ${w_i^{[j]}}'$ are respectively the valuations of the determinants of
$$\F_i / \F_i^{[j]} \to \E_i / \wF_i^{[j]} \qquad \F_i / {\F_i^{[j]}}' \to \E_i / ({\wF_i^{[j]}})'$$
From the previous proposition, the reduction modulo $1-w$ of these maps are equal. Thus
$$\min(w_{i}^{[j]},1-w) = \min({w_i^{[j]}}',1-w)$$
Since $w_i^{[j]} \leq w < 1-w$, we have ${w_i^{[j]}}' = w_i^{[j]}$. 
\end{proof}

\section{The canonical filtration}

Let $K$ be a valuation field, which is an extension of $\mathbb{Q}_p$. We will keep the same notations as in the previous section, and we will consider a $p$-divisible group $G$ over $O_K$. We will also assume that there are adequate filtrations associated to the group $G \times_{O_K} O_K / p$ (the existence of adequate filtrations follows from the previous section).

\subsection{Raynaud group schemes}

In this section, we recall some results concerning finite flat group schemes over $O_K$ endowed with an action of $O_F$. If $H$ is a finite flat group scheme over $O_K$, its degree has been defined in \cite{Fa_deg} and will be denoted by $\deg H$. When $H$ has moreover an action of $O_F$, one can define the partial degrees of $H$, noted $\deg_i H$, for $1 \leq i \leq f$. We refer to \cite{Bi_can} section $2$ for the definition and properties of the partial degrees. \\
We now recall the structure theorem from Raynaud (\cite{Ray}) concerning finite flat group schemes of height $f$ over $O_K$, of $p$-torsion and with an action of $O_F$.

\begin{prop}
Let $H$ be a finite flat group scheme of height $f$ over $O_K$, of $p$-torsion and with an action of $O_{F}$. Then there exist elements $(a_i,b_i)_{1 \leq i \leq f}$ of $O_K$ such that $a_i b_i = p u$ for all $1 \leq i \leq f$ (where $u$ is a fixed $p$-adic unit), with $H$ isomorphic to the spectrum of 
$$O_K[X_1, \dots, X_f] / (X_{i}^p - a_{i+1} X_{i+1})$$
where we identify $X_{f+1}$ and $X_1$. The dual of the group with parameters $(a_i,b_i)_{1 \leq i \leq f}$ is the one with parameter $(b_i,a_i)_{1 \leq i \leq f}$. Moreover, we have $\omega_{H,i} = O_K / a_i$, and therefore $\deg_i H = v(a_i)$, $\deg_i H^D = v(b_i)$ for all $1 \leq i \leq f$. 
\end{prop}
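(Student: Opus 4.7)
The plan is to specialize Raynaud's classification \cite{Ray} of finite flat commutative group schemes with $\mathbb{F}_q$-action to $q = p^f$. Since $H$ is killed by $p$, the $O_F$-action factors through the residue field $k = O_F/pO_F \simeq \mathbb{F}_{p^f}$, and because the height of $H$ equals $f = [k:\mathbb{F}_p]$, the group $H$ becomes an $\mathbb{F}_{p^f}$-vector space scheme of rank one over $O_K$, i.e. exactly an object in Raynaud's framework.

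First, Raynaud's construction attaches to each of the $f$ fundamental characters $\chi_1, \ldots, \chi_f$ of $k^\times$ (normalized so that $\chi_i^p = \chi_{i+1}$) a canonical invertible $O_K$-submodule $L_i$ of the augmentation ideal of $\mathcal{O}(H)$ lying in the $\chi_i$-isotypic component. Since $O_K$ is local, each $L_i$ is free; fix a generator $X_i$. Because raising to the $p$-th power shifts $\chi_i$ to $\chi_{i+1}$, the element $X_i^p$ lies in $L_{i+1}$, yielding a relation $X_i^p = a_{i+1} X_{i+1}$ for some $a_{i+1} \in O_K$. A rank count using the monomial basis $\{ X_1^{e_1} \cdots X_f^{e_f} : 0 \leq e_i \leq p-1 \}$ (of cardinality $p^f$) shows that these are the only relations, giving the claimed presentation. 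Applying the same construction to $H^D$ produces parameters $b_i$; Raynaud's central identity $a_i b_i = pu$, with $u$ a fixed $p$-adic unit depending only on the chosen trivializations, is obtained by matching the comultiplication on $\mathcal{O}(H)$ with the multiplication on $\mathcal{O}(H^D)$. The duality swap $(a_i,b_i) \leftrightarrow (b_i,a_i)$ is built into this matching.

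Finally, $\omega_H = I/I^2$ has $\chi_i$-isotypic component $\omega_{H,i} = L_i / (L_i \cap I^2)$. From the presentation, $X_{i-1}^p = a_i X_i$ lies in $I^2$ (as $p \geq 2$), so $a_i L_i \subset L_i \cap I^2$; the monomial basis shows that no further relation on $X_i$ in $I/I^2$ can arise, so $\omega_{H,i} \simeq O_K/a_i$ and $\deg_i H = v(a_i)$. Applying the same calculation to $H^D$ yields $\deg_i H^D = v(b_i)$. The main obstacle is the identity $a_i b_i = pu$, which is the technical heart of \cite{Ray} and which I would cite rather than reprove here.
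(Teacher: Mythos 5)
The paper offers no proof of this proposition at all --- it is explicitly recalled from Raynaud \cite{Ray} --- and your proposal is a correct unpacking of exactly that reference: the isotypic decomposition of the augmentation ideal under $\mathbb{F}_{p^f}^\times$ via the fundamental characters, the monomial rank count for the presentation, the computation of $\omega_{H,i}=O_K/a_i$ from $X_{i-1}^p=a_iX_i\in I^2$, and the citation of $a_ib_i=pu$ as the one genuinely hard input. This is essentially the same approach as the paper's (namely, deferring to Raynaud), so nothing further is needed.
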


\noindent We will refer to such group schemes as Raynaud group schemes. We will also need a description of the Dieudonn\'e crystal of these group schemes.

\begin{prop} \label{crys}
Let $G$ be a $p$-divisible group over $O_K$ with an action of $O_F$, and let $H \subset G[p]$ be a $O_F$-stable Raynaud group scheme with parameter $(a_i,b_i)_{1 \leq i \leq f}$. Let $\E_H$ be the (contravariant) Dieudonn\'e crystal of $H \times_{O_K} O_K / p$  evaluated at $O_K/p$; it decomposes into $\E_H = \oplus_{i=1}^f \E_{H,i}$, with each $\E_{H,i}$ a free $O_K / p$-module of rank $1$. Let $i$ be an integer between $1$ and $f$. The Verschiebung $V_i : \E_{H,i} \to \E_{H,i-1}^{(p)}$ sends a generator to an element of valuation $p v(b_{i-1})$; the Frobenius $F_i : \E_{H,i-1}^{(p)} \to \E_{H,i}$ sends a generator to an element of valuation $p v(a_{i-1})$.
\end{prop}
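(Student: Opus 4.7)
The plan is to carry out a direct computation in three steps, using Cartier duality to halve the work.

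First, I would observe that each $\E_{H,i}$ is free of rank $1$ over $O_K/p$. Indeed, $\E_H$ is locally free of rank equal to the height of $H$, namely $f$, and the $O_F$-action splits it into $f$ isotypical summands of the same rank, hence each of rank $1$. Fix a generator $e_i$ of $\E_{H,i}$.

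Next, I would identify the Hodge submodule $\omega_{H,i}$ of $\E_{H,i}$ explicitly. The structure theorem for Raynaud group schemes gives $\omega_{H,i} \simeq O_K/a_i$, and applied to $H^D$ (which has parameters $(b_i,a_i)$) yields $\omega_{H^D,i} \simeq O_K/b_i$. A direct computation of the $O_K/p$-linear dual of the cyclic module $O_K/b_i$ shows that $\omega_{H^D,i}^\vee$ is isomorphic to the submodule $a_i \cdot (O_K/p) \subset O_K/p$, which has annihilator $(b_i)$. Plugging this into the Hodge exact sequence
$$0 \to \omega_{H,i} \to \E_{H,i} \to \omega_{H^D,i}^\vee \to 0$$
forces $\omega_{H,i}$ to be the unique cyclic submodule of $\E_{H,i} = (O_K/p)\, e_i$ with annihilator $(a_i)$, namely $\omega_{H,i} = b_i \cdot \E_{H,i}$ up to a unit.

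Finally, since Frobenius twist commutes with taking cyclic submodules of a rank-$1$ free module (if $N = cM$ for $M$ free rank $1$, then $N^{(p)} = c^p M^{(p)}$), I obtain $\omega_{H,i-1}^{(p)} = b_{i-1}^p \cdot \E_{H,i-1}^{(p)}$ inside $\E_{H,i-1}^{(p)}$. The identity $\I V_i = \F_{i-1}^{(p)} = \omega_{H,i-1}^{(p)}$ recalled at the start of Section~\ref{sec1}, valid for $H$ by naturality of the Dieudonn\'e crystal under the inclusion $H \hookrightarrow G[p]$ (which makes $\omega_H$ the image of $\omega_{G[p]}$ in $\E_H$), then forces $V_i(e_i)$ to generate the cyclic submodule $b_{i-1}^p \cdot \E_{H,i-1}^{(p)}$, whence $v(V_i(e_i)) = p\cdot v(b_{i-1})$. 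The assertion on $F_i$ then follows at once by applying the Verschiebung computation to $H^D$: its parameters are $(b_i,a_i)$, and the contravariant Dieudonn\'e crystal of $H^D$ exchanges Frobenius and Verschiebung with those of $H$.

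The main technical point is the identification $\omega_{H,i} = b_i \cdot \E_{H,i}$ carried out in the second step; once this is pinned down, everything else is formal, reducing to the compatibility of Frobenius twist with cyclic submodules of free rank-$1$ modules and to Cartier duality.
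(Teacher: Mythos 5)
Your argument is correct and is essentially the mirror image of the paper's. The paper computes the Frobenius first, using the conjugate filtration as a \emph{quotient}: from the surjections $\E_i \to \E_{H,i}$ and $\omega_{G,i-1}^{(p)} \to \omega_{H,i-1}^{(p)}$ one gets $\E_{H,i}/\I F_i \simeq \omega_{H,i-1}^{(p)} \simeq O_{K,\{1\}}/a_{i-1}^p$, which pins down $F_i$ on a rank-one free module; Verschiebung is then obtained by duality. You instead compute the Verschiebung first, realizing $\omega_{H,i-1}$ as an explicit \emph{submodule} $b_{i-1}\cdot\E_{H,i-1}$ via the Hodge exact sequence for $H$, identifying $\I V_i$ with its twist, and then get Frobenius by duality. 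The content is the same (functoriality of the crystal along $H\hookrightarrow G[p]$, the Raynaud computation $\omega_{H,i}\simeq O_K/a_i$, and Cartier duality), but note that your route leans on the left-exactness of the Hodge sequence for the finite flat group scheme $H$ (injectivity of $\omega_{H,i}\to\E_{H,i}$ together with the identification of its cokernel), whereas the paper's cokernel-side argument only ever uses surjections inherited from $G$, which is slightly more economical. Both versions read valuations in $O_K/p$ in the truncated sense when $p\,v(a_{i-1})$ or $p\,v(b_{i-1})$ exceeds $1$; that caveat is implicit in the paper as well.
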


\begin{proof}
Let $\E = \oplus_{i=1}^f \E_i$ be the Dieudonn\'e crystal of $G \times_{O_K} O_K / p$ evaluated at $O_K / p$. Let $\wF_i \subset \E_i$ and $\wF_{H,i} \subset \E_{H,i}$ be the images of the Frobenius. We have $\E_i / \wF_i \simeq \omega_{G,i-1}^{(p)}$ and have surjective maps
$$\E_i \to \E_{H,i} \qquad \omega_{G,i-1}^{(p)} \to \omega_{H,i-1}^{(p)}$$
Thus
$$\E_{H,i} / \wF_{H,i} \simeq \omega_{H,i-1}^{(p)} \simeq O_{K,\{1\}} / a_{i-1}^p$$
This gives the result for the Frobenius. The result for the Verschiebung can be obtained by duality. 
\end{proof}

\subsection{Definition of the canonical filtration} \label{defcanfil}

We will now recall some definitions and properties of the canonical filtration. Recall that $G$ is a $p$-divisible group over $O_K$ with an action of $O_F$. The module $\omega_G$ thus decomposes into $\omega_G = \oplus_{i=1}^f \omega_{G,i}$, where $\omega_{G,i}$ is a free $O_K$-module for all $i$ between $1$ and $f$. Let $d_i$ be the rank of $\omega_{G,i}$ for all $1 \leq i \leq f$. We keep the notations from the section \ref{sec1}; in particular, one has an integer $r$, and integers $\delta_1, \dots, \delta_r$. 

\begin{defi} \label{defi_can}
Let $1 \leq j \leq r$, and let $C \subset G[p]$ be a $O_F$-stable finite flat subgroup of height $ f \delta_j$. We say that $C$ is canonical (of height $f \delta_j$) if 
$$\deg C^D < \sum_{i=1}^f \max(\delta_j-d_i,0) + \frac{1}{2}$$
It is a strong canonical subgroup if we have
$$\deg C^D < \sum_{i=1}^f \max(\delta_j-d_i,0) + \frac{1}{p+1}$$
We say that $G$ admits a (strong) canonical filtration if there exist (strong) canonical subgroups of height $f \delta_j$, for all $1 \leq j \leq r$.
\end{defi}

\noindent Let $C \subset G[p]$ be any $O_F$-stable finite flat subgroup of height $f \delta_j$, where $j$ is an integer between $1$ and $r$. Note that since $\deg_i C \leq \deg_i G[p] = d_i$, we have $\deg_i C^D \geq \delta_j - d_i$ for all $1 \leq i \leq f$. The degree of $C^D$ is thus always greater or equal than $\sum_{i=1}^f \max(\delta_j-d_i,0)$; the subgroup is canonical if this degree is close to that value. \\
In the ordinary case (i.e. if there exists an integer $0 < d < h$ with $d_i = d$ for all $1 \leq i \leq f$), this definition agrees with the definition of the canonical subgroup given in \cite{Bi_can} section $3.1$.

\begin{prop}
Let $j$ be an integer between $1$ and $r$; there exists at most one canonical subgroup of height $f \delta_j$. If $C$ is a canonical subgroup of height $f \delta_j$, then $C^\bot \subset G^D [p]$ is canonical of height $f (h - \delta_j)$. \\
Let $1 \leq j \leq k \leq r$ be integers, and assume that $C_{l}$ is a canonical subgroup of height $f \delta_l$, for each $l \in \{j,k\}$. Then $C_j \subset C_k$.
\end{prop}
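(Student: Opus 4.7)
The plan is to prove all three parts of the proposition from one common circle of ideas: scheme-theoretic intersection and sum of $O_F$-stable finite flat subgroups of $G[p]$, additivity of partial degrees in the resulting short exact sequence, and an integer-arithmetic argument. The main obstacle, and the only non-trivial input I would invoke, is the standard fact from Fargues' theory of finite flat group schemes over $O_K$: that schematic intersections and scheme-theoretic images of $O_F$-stable finite flat subgroups of $G[p]$ are again $O_F$-stable finite flat, and that (partial) degrees are additive in the four-term exact sequence they produce. Granted this, the rest is integer arithmetic.

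Given two $O_F$-stable finite flat $C_1,C_2\subset G[p]$ of heights $fh_1,fh_2$, let $E$ be the schematic closure of $C_{1,K}\cap C_{2,K}$ and $D$ the scheme-theoretic image of the addition map $C_1\oplus C_2\to G$; write $fh_E,fh_D$ for their heights. Then $h_E+h_D=h_1+h_2$, $h_E\le\min(h_1,h_2)$, $h_D\ge\max(h_1,h_2)$, and $\deg_iE+\deg_iD=\deg_iC_1+\deg_iC_2$ for every $i$. I would also record two elementary bounds: for any $O_F$-stable $C\subset G[p]$ of height $fh'$, the surjection $\omega_{G[p]}\twoheadrightarrow\omega_C$ gives $\deg_iC\le d_i$ and the duality $\deg_iC+\deg_iC^D=h'$ gives $\deg_iC\le h'$, so $\deg_iC\le\min(h',d_i)$; the canonical condition for $C$ then rewrites as $\sum_i[\min(h',d_i)-\deg_iC]<1/2$.

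Uniqueness and monotonicity. Take canonical $C_1,C_2$ of heights $fh_1\le fh_2$ (so $h_1,h_2\in\{\delta_\ell\}$) and assume $C_1\not\subset C_2$, equivalently $E\subsetneq C_1$; then $h_E<h_1\le h_2<h_D$. Combining the identity $\deg_iE+\deg_iD=\deg_iC_1+\deg_iC_2$ with the upper bound $\deg_\bullet\le\min(\cdot,d_i)$ on $\deg E,\deg D$ and the canonical lower bound on $\deg C_1,\deg C_2$ yields
$$
\sum_iU_i<1,\qquad U_i:=\min(h_1,d_i)+\min(h_2,d_i)-\min(h_E,d_i)-\min(h_D,d_i).
$$
A direct case split on $d_i$ against $h_E,h_1,h_2,h_D$ shows that each $U_i$ is a non-negative integer, and is $\ge 1$ whenever $h_E<d_i<h_D$. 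Choosing any $i$ with $d_i=h_1$ (such exists since $h_1=\delta_j\in\{d_i\}\cap[1,h-1]$) then forces $U_i\ge 1$, contradicting $\sum_iU_i<1$. The specialization $h_1=h_2=\delta_j$ yields uniqueness, and $h_1=\delta_j<h_2=\delta_k$ yields the inclusion $C_j\subset C_k$.

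Duality. Partial-degree additivity in $0\to C\to G[p]\to(C^\bot)^D\to 0$ gives $\deg_i(C^\bot)^D=d_i-\deg_iC$, whence $\deg(C^\bot)^D=d-f\delta_j+\deg C^D$. The elementary identity $\sum_i[\max(\delta_j-d_i,0)-\max(d_i-\delta_j,0)]=f\delta_j-d$ then converts the canonical bound $\deg C^D<\sum_i\max(\delta_j-d_i,0)+1/2$ for $C$ into $\deg(C^\bot)^D<\sum_i\max(d_i-\delta_j,0)+1/2$; since the partial dimensions of $G^D$ are $h-d_i$ and $C^\bot$ has height $f(h-\delta_j)$, this last inequality is precisely the canonical condition for $C^\bot\subset G^D[p]$.
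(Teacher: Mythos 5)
Your overall strategy is the right one, and it is in substance the argument the paper has in mind: the paper's own proof is a one-line reference to Propositions $1.24$ and $1.25$ of \cite{Bi_mu}, which carry out exactly this kind of intersection/sum degree bookkeeping, so you have effectively supplied the details the paper omits rather than taken a different route. The duality step is correct as written, and the combinatorial core is sound: $U_i\ge 0$ follows from concavity of $t\mapsto\min(t,d_i)$ together with $h_E+h_D=h_1+h_2$ and $h_E\le h_1\le h_2\le h_D$, and $U_i\ge 1$ when $h_E<d_i<h_D$, applied to an index $i$ with $d_i=\delta_j$ (which exists by the definition of the $\delta_\ell$), gives the contradiction.

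There is, however, one misstated input, and since you single it out as the only non-trivial ingredient, it needs correcting. The sequence $0\to E\to C_1\oplus C_2\to D\to 0$ is not exact in general: $E$ is only the flat closure of the generic kernel, and the induced map $(C_1\oplus C_2)/E\to D$ is an isomorphism on generic fibres but need not be an isomorphism integrally. Consequently the (partial) degrees are not additive across these four terms. What Fargues' theory actually provides --- via the lemma that a morphism of finite flat group schemes over $O_K$ which is a generic isomorphism does not decrease the degree, applied to $(C_1\oplus C_2)/E\to D$ --- is the inequality $\deg_iE+\deg_iD\ge\deg_iC_1+\deg_iC_2$. Fortunately this inequality points in the direction your argument needs: writing $\sum_iU_i$ as the sum of the two canonical defects of $C_1$ and $C_2$ (each $<1/2$), minus the non-negative defects $\sum_i[\min(h_E,d_i)-\deg_iE]$ and $\sum_i[\min(h_D,d_i)-\deg_iD]$, plus the non-positive quantity $\sum_i(\deg_iC_1+\deg_iC_2-\deg_iE-\deg_iD)$, you still obtain $\sum_iU_i<1$, and the rest of the proof goes through unchanged. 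So the argument is salvageable verbatim after replacing your equality by the correct inequality; but the key lemma as you state it is false.
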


\begin{proof}
This is a local analogue of \cite{Bi_mu} Proposition $1.24$ and $1.25$, and the proof is similar.
\end{proof}

\noindent In particular, if $G$ admits a canonical filtration, one has
$$0 \subset C_1 \subset \dots \subset C_r \subset G[p]$$
where $C_j$ is the canonical subgroup of height $f \delta_j$, for $1 \leq j \leq r$. \\
The main result of \cite{He_can} is the existence of a canonical subgroup (of height $f d_i$) if the valuation of $Ha_i(G)$ is sufficiently small. We recall this theorem here. 

\begin{theo}[\cite{He_can} Th\'eor\`eme $6.10$] \label{theo_he}
Let $1 \leq i \leq f$ be an integer with $d_i \notin \{0,h\}$. Assume that $p > 4h$, and that the valuation of $Ha_i (G)$ is strictly less than $1/2$. Then there exists a canonical subgroup $C_i$ of height $f d_i$. Moreover, one has
$$\sum_{k=0}^{f-1} p^k (\deg_{i-k} C^D - \max(d_i - d_{i-k},0) ) = v(Ha_i (G))$$
\end{theo}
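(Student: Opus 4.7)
The plan is to follow Fargues' strategy for the classical canonical subgroup, adapted to the $O_F$-equivariant setting. The underlying principle is that when $v(Ha_i(G))$ is small, $G$ is close to being $\mu$-ordinary, and the canonical slope filtration of a $\mu$-ordinary $p$-divisible group (constructed by Moonen) should deform to $G$ with controlled loss governed by the valuation of the Hasse invariant. Since the theorem asks for an $O_F$-stable finite flat subgroup of $G[p]$ of height $fd_i$ and computes its partial degrees, the proof naturally splits into an existence part and a numerical part.

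First, I would construct a candidate Dieudonn\'e submodule at the level of $\E$ modulo $p$. The existence of the adequate filtrations from Section \ref{sec1} provides precisely the right piece: working component by component, one takes the submodule of $\E_k$ given by $\wF_k^{[s(i)+1]}$ (with the index shifted to account for the position of $d_i$ among the $\dd_j$'s), which in the $\mu$-ordinary case would cut out exactly the multiplicative-type factor of height $fd_i$. The stability under Frobenius and Verschiebung follows from Hypothesis~\ref{adequate} and the remark after it. The ranks match: one sees that the resulting candidate has total rank $fd_i$ in $\E$.

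Second, I would lift this candidate to an honest $O_F$-stable finite flat subgroup $C_i\subset G[p]$ over $O_K$. Here one uses Hopf-algebraic methods: write $G[p]$ locally in terms of Raynaud-type coordinates (building on Proposition~\ref{crys}), and exhibit $C_i$ as the vanishing locus of an explicit ideal constructed from the adequate filtrations. The condition $v(Ha_i(G)) < 1/2$ together with $p > 4h$ ensures that this ideal is well-defined, flat, and actually cuts out a subgroup scheme --- this is the step where the numerical hypotheses are essential and where Fargues' method (and Hernandez' refinement) become technical. Once constructed, the estimate $\deg C_i^D < \sum_{k=1}^f \max(d_i - d_k,0) + 1/2$ should follow from the valuation bound on $Ha_i(G)$ combined with the fact that, by Theorem~\ref{link}, $Ha_i(G)$ is a weighted product of the $h_k^{[s(i)]}$.

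Third, for the degree formula, once $C_i$ exists I would analyse its Cartier dual via the successive graded pieces of $G[p]$ along the canonical filtration. Using Proposition~\ref{crys} inductively on the Raynaud-type quotients, one reads off $\deg_{i-k} C_i^D$ from the valuations of the Verschiebung maps $V_{i-k+1}$ on the appropriate graded piece of $\E$. The weighted combination $\sum_{k=0}^{f-1} p^k(\deg_{i-k} C_i^D - \max(d_i - d_{i-k},0))$ is then designed to equal the valuation of the determinant of the iterated map $V^f$ on $\bigwedge^{d_i}\F_i$ --- and by the very definition recalled in Section~\ref{mu_ha}, this valuation is $v(Ha_i(G))$. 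The subtractions $\max(d_i - d_{i-k},0)$ are precisely the contributions that one has to remove because $V^f$ factors through powers of $p$ at certain steps when $d_{i-k} < d_i$.

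The main obstacle is clearly the lifting step. Constructing the candidate at the Dieudonn\'e level is essentially formal given the adequate filtrations, and the degree computation, once $C_i$ is in hand, reduces to a careful but mechanical analysis via Proposition~\ref{crys}. But showing that the Dieudonn\'e-level candidate actually integrates to a finite flat subgroup scheme over $O_K$ requires the sharp numerical bounds $p > 4h$ and $v(Ha_i(G)) < 1/2$, and adapting Fargues' Hopf-algebraic argument to the $O_F$-equivariant setting with the non-trivial index $s(i)$ requires delicate bookkeeping --- this is the technical heart of Hernandez' argument in \cite{He_can}.
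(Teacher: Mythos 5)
This statement is not proved in the paper at all: it is quoted verbatim from Hernandez (\cite{He_can}, Th\'eor\`eme 6.10), and the only thing the paper adds is the remark, after the main theorem of Section \ref{canfil}, that the numerical relation $\sum_k p^k(\deg_{i-k}C^D - \max(d_i-d_{i-k},0)) = v(Ha_i(G))$ can be recovered \emph{a posteriori} by combining $\deg_k C^D = \max(d_i-d_k,0)+v(h_k^{[s(i)]})$ with the factorization $Ha_i(G) = h_i^{[s(i)]}(h_{i-1}^{[s(i)]})^p\cdots(h_{i+1}^{[s(i)]})^{p^{f-1}}$ of Theorem \ref{link}. Your third step is essentially a direct version of that computation (filter $C$ by Raynaud group schemes, use Proposition \ref{crys}, keep track of the twist by $p$ coming from $V_i$ landing in $\E_{i-1}^{(p)}$), and it is viable, though you should be explicit that the valuations read off from Proposition \ref{crys} are $p\deg_{i-1}C^D$ and that one needs $p\alpha < 1-\alpha$ to divide out the Frobenius twist, exactly as in the paper's Section \ref{canfil}.

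The genuine gap is your second step. The existence of the canonical subgroup is the entire content of the theorem, and your proposal does not prove it: you describe a candidate sub-object of $\E$ mod $p$ and then state that ``Hopf-algebraic methods'' and ``Fargues' method and Hernandez' refinement'' will integrate it to a finite flat subgroup scheme over $O_K$. That lifting step is precisely where all the work lies, and deferring it to the very reference being proved is circular as a proof. There is also a structural problem with your first step: adequate filtrations over a valuation ring always exist (Section 3.1) and are only unique modulo $\mathfrak{m}_{1-w}$, so an arbitrary choice of $\wF_k^{[\bullet]}$ cannot single out the canonical subgroup; the paper's logic runs in the opposite direction (a canonical subgroup, once it exists, \emph{produces} adequate filtrations via $\wG_i$ and $\G_i$, and only then are the partial degrees matched with the $w_i^{[j]}$). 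Finally, your candidate $\wF_k^{[s(i)+1]}$ has the wrong rank: the kernel of $\E_k \to \E_{C,k}$ for a subgroup of height $fd_i=f\dd_{s(i)}$ must have rank $h-\dd_{s(i)}$, which is the rank of $\wF_k^{[s(i)]}$, not of $\wF_k^{[s(i)+1]}$.
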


\begin{rema}
This theorem is actually valid under weaker conditions, see \cite{He_can} Th\'eor\`eme $6.10$ for the precise statement.
\end{rema}

\subsection{The partial degrees of the canonical filtration} \label{canfil}

In this section, we will relate the refined partial Hasse invariants constructed previously to the partial degrees of the canonical subgroups (if they exist). Recall that we assumed the existence of adequate filtrations for $G \times_{O_K} O_K / p$, and we noted $w_i^{[j]}$ the valuation of the section $h_i^{[j]}$, for $1 \leq i \leq f$ and $1 \leq j \leq r$. \\
First let us start with two lemmas.

\begin{lemm}
Let $1 \leq j \leq r$, and assume that $C$ is a canonical subgroup of height $f \delta_j$. Let $\alpha = \deg C^D - \sum_{i=1}^f \max(\delta_j-d_i,0) $. Then for all $1 \leq i \leq f$, we have
$$\deg_i C^D \leq \max(\delta_j-d_i,0)+\alpha$$
\end{lemm}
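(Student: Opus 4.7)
The plan is to observe that the claim reduces to the elementary fact that each term of a sum of nonnegative reals is bounded by the total. First I would recall two inputs that have already been set up in the paper: the partial degrees satisfy the additivity $\deg C^D = \sum_{i=1}^f \deg_i C^D$ (from the definition of partial degrees recalled in Section 4.1, cf.\ \cite{Bi_can}), and the pointwise lower bound $\deg_i C^D \geq \max(\delta_j - d_i, 0)$, which is exactly the inequality mentioned in the discussion following Definition \ref{defi_can} (it follows from $\deg_i C \leq \deg_i G[p] = d_i$ together with $\deg_i C + \deg_i C^D = \delta_j$, since $C$ has $O_F$-height $f\delta_j$).

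Given these, the proof is immediate: substituting additivity into the definition of $\alpha$ yields
$$\alpha \;=\; \deg C^D - \sum_{i=1}^f \max(\delta_j - d_i, 0) \;=\; \sum_{i=1}^f \bigl(\deg_i C^D - \max(\delta_j - d_i, 0)\bigr).$$
Each summand on the right is nonnegative by the pointwise bound, so each summand is at most the total $\alpha$, which rearranges to $\deg_i C^D \leq \max(\delta_j - d_i, 0) + \alpha$ for every $i$, as desired.

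There is no real obstacle here; the only thing to be careful about is to cite (or briefly re-derive) the two ingredients above rather than invoking them silently, and in particular to make clear why the quantities $\deg_i C^D - \max(\delta_j - d_i,0)$ are nonnegative so that the bound by $\alpha$ is valid termwise. The canonicity hypothesis itself is not used in this lemma (it only enters to guarantee that $\alpha < 1/2$, which is not required for the stated inequality), so one could note in a remark that the lemma in fact holds for any $O_F$-stable finite flat subgroup of height $f\delta_j$ with $\alpha$ defined as above.
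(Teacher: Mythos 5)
Your proof is correct and is essentially identical to the paper's: both define the nonnegative quantities $\deg_i C^D - \max(\delta_j-d_i,0)$, observe that they sum to $\alpha$, and conclude termwise. Your added remarks (explicitly sourcing the nonnegativity from the discussion after Definition \ref{defi_can}, and noting that canonicity is not actually needed) are accurate but do not change the argument.
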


\begin{proof}
Let $\varepsilon_i := \deg_i C^D - \max(\delta_j - d_i,0)$ for $1 \leq i \leq f$; this is a non-negative real. Moreover, we have
$$\sum_{i=1}^f \varepsilon_i = \alpha $$
We conclude that $\varepsilon_i \leq \alpha$ for all $1 \leq i \leq f$.
\end{proof}

\begin{lemm}
Let $1 \leq j \leq r$, and assume that $C$ is a canonical subgroup of height $f \delta_j$. Let $\alpha = \deg C^D - \sum_{i=1}^f \max(\delta_j-d_i,0) $, and let $1 \leq i \leq f$ be an integer. If $j \leq s(i)$, then $\omega_{C,i,\{1-\alpha\}}$ is a quotient of $\omega_{G,i,\{1-\alpha\}}$, which is free of rank $\delta_j$ over $O_{K,\{1-\alpha\}}$. \\
If $j > s(i)$, then $\omega_{C^\bot,i,\{1-\alpha\}}$ is a quotient of $\omega_{G^D,i,\{1-\alpha\}}$, which is free of rank $h-\delta_j$ over $O_{K,\{1-\alpha\}}$. 
\end{lemm}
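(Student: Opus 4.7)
I prove the two assertions separately: the ``quotient of'' statement and the rank statement.

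For the quotient statement, use that $\omega_{G[p],i} = \omega_{G,i}/p$ since $G$ is $p$-divisible, so the inclusion $C \subset G[p]$ induces a surjection $\omega_{G[p],i} \twoheadrightarrow \omega_{C,i}$. Since $1 - \alpha < 1$, the ring $O_{K,\{1-\alpha\}}$ is itself a quotient of $O_K/p$, and base-changing preserves surjectivity. The same argument applied to $C^\bot \subset G^D[p]$ handles the dual case.

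For the rank statement when $j \leq s(i)$, decompose $\omega_{C,i} \cong \bigoplus_k O_K/(a_{i,k})$ with each $a_{i,k}$ dividing $p$, using the structure theorem for finitely generated torsion modules over the valuation ring $O_K$. The $O_F$-equivariant theory of partial degrees for finite flat group schemes killed by $p$ (Raynaud--Fargues, \cite{Ray}, \cite{Fa_deg}, \cite{Bi_can}) provides a dual decomposition $\omega_{C^D,i} \cong \bigoplus_k O_K/(b_{i,k})$ with $v(a_{i,k}) + v(b_{i,k}) = 1$, and exactly $\delta_j$ summands (reflecting the partial height of $C$). The previous lemma gives $\deg_i C^D = \sum_k v(b_{i,k}) \leq \alpha < 1$, so every $v(b_{i,k}) \leq \alpha$; in particular no summand is trivial, and $v(a_{i,k}) \geq 1 - \alpha$ for every $k$. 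Consequently $O_K/(a_{i,k}, \mathfrak{m}_{1-\alpha}) = O_{K,\{1-\alpha\}}$ is the full quotient, and $\omega_{C,i,\{1-\alpha\}}$ is free of rank $\delta_j$.

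For $j > s(i)$, apply the same argument to $C^\bot \subset G^D[p]$. The partial dimensions of $G^D$ are $h - d_i$, and a short computation using $\deg C + \deg C^D = f \delta_j$ together with the identity $\sum_i [\max(d_i - \delta_j, 0) - \max(\delta_j - d_i, 0)] = d - f \delta_j$ shows that the analogue of $\alpha$ attached to $C^\bot$ inside $G^D[p]$ equals the original $\alpha$. Since $j > s(i)$ translates to $h - \delta_j < h - d_i$, the case-1 argument now applied to $C^\bot$ gives that $\omega_{C^\bot, i, \{1-\alpha\}}$ is free of rank $h - \delta_j$.

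The main obstacle is the structural input that $\omega_{C,i}$ has exactly $\delta_j$ elementary divisors with the duality $v(a_{i,k}) + v(b_{i,k}) = 1$: since $C$ has height $f \delta_j$, which in general exceeds $f$, this is not immediate from the Raynaud proposition stated earlier, but must be deduced from it by a dévissage of $C$ along an $O_F$-stable filtration whose subquotients are Raynaud group schemes of height $f$.
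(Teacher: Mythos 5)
Your overall strategy coincides with the paper's: reduce to the elementary divisors of $\omega_{C,i}$ and show that each has valuation at least $1-\alpha$, then handle $j>s(i)$ by passing to $C^\bot\subset G^D[p]$ (your verification that the invariant $\alpha$ attached to $C^\bot$ inside $G^D[p]$ equals the original $\alpha$ is correct, and the paper performs the same reduction by duality). The quotient statement and the final reduction modulo $\mathfrak{m}_{1-\alpha}$ are fine. However, the central step of your argument rests on an unestablished claim: that the decompositions $\omega_{C,i}\simeq\bigoplus_k O_K/(a_{i,k})$ and $\omega_{C^D,i}\simeq\bigoplus_k O_K/(b_{i,k})$ can be matched summand by summand with $v(a_{i,k})+v(b_{i,k})=1$. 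This is not contained in the references you cite, which give only the aggregate relation $\deg_i C+\deg_i C^D=\delta_j$, and your proposed derivation by d\'evissage along a filtration with Raynaud graded pieces does not deliver it: such a filtration exhibits $\omega_{C,i}$ as a successive extension of cyclic modules, which bounds the number of generators by $\delta_j$ and computes $\sum_k v(a_{i,k})=\deg_i C$, but an extension of cyclic modules is in general not their direct sum, so the elementary divisors of $\omega_{C,i}$ need not be those read off from the graded pieces, let alone pair off against those of $\omega_{C^D,i}$ with valuations summing to $1$.

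The gap is easily repaired, and the repair is exactly what the paper does: the summand-wise duality is not needed. From $\deg_i C^D\le\alpha$ (your previous lemma, with $\max(\delta_j-d_i,0)=0$ because $j\le s(i)$) and the aggregate relation $\deg_i C+\deg_i C^D=\delta_j$, one gets $\sum_{k=1}^{\delta_j} v(a_{i,k})=\deg_i C\ge\delta_j-\alpha$; since the decomposition can be written with exactly $\delta_j$ cyclic factors, each of valuation at most $1$ (as $C$ is killed by $p$), every single $v(a_{i,k})$ must be at least $1-\alpha$. This averaging argument uses only the generator count (which the d\'evissage does provide) and the degree relation from the theory of partial degrees, and the rest of your proof, including the treatment of the case $j>s(i)$, then goes through unchanged.
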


\begin{proof}
By duality, one can assume that $j \leq s(i)$. From the previous lemma, we have
$$\deg_i C \geq \delta_j - \alpha$$
Since $\omega_{C,i}$ is generated by $\delta_j$ elements, by the elementary divisors theorem, there exists elements $x_1, \dots,x_{\delta_j}$ in $O_K$ with valuation less than $1$ such that
$$\omega_{C,i} \simeq \bigoplus_{k=1}^{\delta_j} O_{K} / x_k$$
We thus have $\sum_{k=1}^{\delta_j} v(x_k) = \deg_i C$, so $v(x_k) \geq 1 - \alpha$ for all $1 \leq k \leq \delta_j$. This implies that 
$$\omega_{C,i,\{1 - \alpha\}} \simeq (O_{K,\{1 - \alpha\}})^{\delta_j}$$
\end{proof}

\noindent In particular, the existence of a canonical filtration for $G$ implies that both modules $\omega_{G,\{1 - \beta\}}$ and $\omega_{G^D,\{1 - \beta\}}$ are filtered by free quotients, for some real $\beta$ depending on the canonical subgroups. This gives adequate filtrations on $\E_{i,\{1 - \beta\}}$, $1 \leq i \leq f$. \\
We can now state the main theorem of this section, which relates the partial degrees of a canonical subgroup to the refined partial invariants. Note that in the ordinary case, the partial degrees of the canonical subgroup have been computed in \cite{Bi_can} section $3.2$.

\begin{theo}
Let $1 \leq j \leq r$, and assume that $C$ is a strong canonical subgroup of height $f \delta_j$. Then for all $1 \leq i \leq f$, we have
$$\deg_i C^D = \max(\delta_j-d_i,0) + w_i^{[j]}$$
\end{theo}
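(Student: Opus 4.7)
The plan is to construct a canonical piece $(\F_i^{[j]})^C$ of the adequate filtration directly from the canonical subgroup $C$ (and its orthogonal $C^\bot$), invoke Proposition \ref{unique} to identify it with the chosen piece $\F_i^{[j]}$ modulo a small power of $p$, and then evaluate $h_i^{[j]}$ on the $C$-filtration to express $w_i^{[j]}$ as the valuation of a partial Hasse map on the BT1 group $C$.

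\textbf{Construction of $(\F_i^{[j]})^C$.} Set $\alpha := \deg C^D - \sum_i \max(\delta_j - d_i,0)$; strong canonicity yields $\alpha < 1/(p+1)$. Working modulo $p^{1-\alpha}$: for $i$ with $j \leq s(i)$ the second preparatory lemma provides a free rank-$\delta_j$ quotient $\omega_{G,i,\{1-\alpha\}} \twoheadrightarrow \omega_{C,i,\{1-\alpha\}}$; let $(\F_i^{[j]})^C \subset \F_{i,\{1-\alpha\}}$ be its kernel, a direct factor of rank $d_i - \delta_j$. For $i$ with $j > s(i)$, apply the dual construction using $C^\bot \subset G^D[p]$: the free rank-$(h-\delta_j)$ quotient of $\omega_{G^D,i,\{1-\alpha\}}$ dualizes to an inclusion $\omega_{C^\bot,i}^\vee \hookrightarrow \omega_{G^D,i}^\vee = \E_{i,\{1-\alpha\}}/\F_{i,\{1-\alpha\}}$ whose preimage in $\E_{i,\{1-\alpha\}}$ defines $(\F_i^{[j]})^C$, a direct factor of rank $h + d_i - \delta_j$.

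\textbf{Adequacy and matching.} Replacing the $j$-th piece of the given adequate filtration by $(\F_i^{[j]})^C$ yields a second filtration, again adequate modulo $p^{1-\alpha}$: Hypothesis \ref{fil} is built in, and Hypothesis \ref{adequate} follows from the commutative diagram
$$
\begin{array}{ccc}
\omega_{G,i,\{1-\alpha\}} & \twoheadrightarrow & \omega_{C,i,\{1-\alpha\}} \\
V_i \downarrow & & V_i^C \downarrow \\
\omega_{G,i-1,\{1-\alpha\}}^{(p)} & \twoheadrightarrow & \omega_{C,i-1,\{1-\alpha\}}^{(p)}
\end{array}
$$
(and its $C^\bot$-analogue), which shows $V_i((\F_i^{[j]})^C) \subset ((\F_{i-1}^{[j]})^C)^{(p)}$; the containments $\F_i^{[j-1]} \supset (\F_i^{[j]})^C \supset \F_i^{[j+1]}$ (in the appropriate order) follow by induction on $i$ from the adequacy of the original filtration. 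Proposition \ref{unique}, whose hypothesis $w<1$ holds under strong canonicity by combining Theorem \ref{link} with Theorem \ref{theo_he} and the individual bound $w_i^{[j']} \leq \alpha$, then yields $(\F_i^{[j]})^C \equiv \F_i^{[j]} \pmod{p^{1-w}}$, so $w_i^{[j]}$ can be computed using the $C$-filtration.

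\textbf{Computation, duality and main obstacle.} For $j \leq s(i)$ the source $\F_i/(\F_i^{[j]})^C$ of $H_i^{[j]}$ identifies with $\omega_{C,i,\{1-\alpha\}}$, and unravelling the recursive definition of $\wF_i^{[j]}$ via the isomorphisms $V_i : \E_i/\wF_i \simeq \F_{i-1}^{(p)}$ and $F_i : (\E_{i-1}/\F_{i-1})^{(p)} \simeq \wF_i$ presents the target as built from Dieudonn\'e-crystal pieces of $C$; the map $H_i^{[j]}$ then becomes the restriction of $V_i^C$ to $\omega_{C,i}$, and the determinant of this partial Hasse map of the BT1 group $C$ has valuation $\deg_i C^D$, so $w_i^{[j]}=\deg_i C^D = \max(\delta_j-d_i,0)+\deg_i C^D$ since the max vanishes. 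The case $j>s(i)$ follows by applying Theorem \ref{dual2} to $(G^D,C^\bot,r+1-j)$, together with the identity $\deg_i C + \deg_i C^D = \delta_j$ for a $p$-torsion $O_F$-stable finite flat group of $O_F$-height $f\delta_j$. The principal technical obstacle is the BT1-generalization of Proposition \ref{crys}: one must verify via elementary divisors on $\omega_C$ that the determinant of $V_i^C|_{\omega_{C,i}}$ has valuation exactly $\deg_i C^D$ for $C$ of arbitrary $O_F$-height $f\delta_j$, while simultaneously controlling all the reductions modulo fractional powers of $p$ so that Proposition \ref{unique} applies cleanly.
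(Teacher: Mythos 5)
Your overall strategy is the paper's: manufacture the $j$-th layer of the filtration from $C$ (and $C^\bot$), show it agrees with the chosen adequate filtration modulo a fractional power of $p$, and then compute the Hasse section on the Dieudonn\'e module of $C$. Your $(\F_i^{[j]})^C$ is literally the paper's $\G_{i,\{1-\alpha\}} = \wG_{i,\{1-\alpha\}}\cap\F_{i,\{1-\alpha\}}$ (resp. $\wG_{i,\{1-\alpha\}}+\F_{i,\{1-\alpha\}}$). However, two steps as written are genuine gaps. The first is the matching step. Proposition \ref{unique} compares two adequate filtrations defined over $O_K/p$, whereas $(\F_i^{[j]})^C$ exists only modulo $\mathfrak{m}_{1-\alpha}$, so the proposition does not apply as stated; more seriously, its hypothesis $w<1$ involves the sum of \emph{all} the $w_i^{[j']}$, and nothing in the theorem's hypotheses controls $w_i^{[j']}$ for $j'\neq j$ (only one strong canonical subgroup, of one height, is assumed). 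Your appeal to Theorem \ref{theo_he} smuggles in the assumption $p>4h$, which is explicitly not made here, and the ``individual bound $w_i^{[j']}\leq\alpha$'' is, for $j'=j$, essentially the statement being proved and is false in general for $j'\neq j$. The correct route, which is what the paper does, is to observe that the two filtrations differ only in the $j$-th layer and to rerun the proof of the uniqueness lemma using the valuations of the $C$-side maps, namely $\varepsilon_i=\deg_i C^D-\max(\delta_j-d_i,0)$; these are each at most $\alpha$ and sum to $\alpha$ by the first preparatory lemma, so one gets agreement modulo $\mathfrak{m}_{1-2\alpha}$, which suffices because strong canonicity gives $\deg_i C^D\leq\alpha<1-2\alpha$.

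The second gap is the valuation computation that you yourself flag as ``the principal technical obstacle'': it is precisely the crux of the argument and is left unproved. The paper settles it by filtering $C$ by Raynaud group schemes and applying Proposition \ref{crys}, which yields that the \emph{full} Verschiebung $V_{i+1}:\E_{C,i+1,\{1-\alpha\}}\to(\E_{C,i,\{1-\alpha\}})^{(p)}$ has determinant of valuation $p\deg_i C^D$; this map is then identified with the Frobenius twist of the natural map $\F_{i,\{1-\alpha\}}/\G_{i,\{1-\alpha\}}\to\E_{i,\{1-\alpha\}}/\wG_{i,\{1-\alpha\}}$, and the exponent is divided by $p$ using $p\alpha<1-\alpha$ --- this is exactly where the strong canonicity bound $\alpha<1/(p+1)$ is consumed. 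Your version, which asserts that $\det(V_i^C|_{\omega_{C,i}})$ has valuation exactly $\deg_i C^D$, skips both the d\'evissage through Raynaud group schemes and the untwisting step where the factor $p$ disappears; without these the proof does not close.
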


\begin{proof}
Let $\alpha = \deg C^D - \sum_{i=1}^f \max(\delta_j-d_i,0)$, and we keep the notations from the previous sections. Let $i$ be an integer between $1$ and $f$, and consider the surjective morphism
$$\E_{i,\{1 - \alpha\}} \to \E_{C,i,\{1 - \alpha\}}$$
Let $\wG_{i,\{1 - \alpha\}}$ be the kernel of this map. This is a free $O_{K,\{1-\alpha\}}$-module of rank $h-\delta_j$. \\
From the previous lemma $\G_{i,\{1-\alpha\}} :=\wG_{i,\{1 - \alpha\}} \cap \F_{i,\{1 - \alpha\}}$ is a free $O_{K,\{1-\alpha\}}$-module of rank $d_i - \delta_j$ if $j \leq s(i)$. Thus $V_{i+1}^{-1} (\G_{i,\{1-\alpha\}})^{(p)}$ is a free $O_{K,\{1-\alpha\}}$-module of rank $h-d_i + d_i - \delta_j = h-\delta_j$ containing $\wG_{i+1,\{1 - \alpha\}}$. Since this last module  also free of rank $h - \delta_j$, this is an equality. Note that one has $\G_{i,\{1 - \alpha\}} = \F_{i,\{1 - \alpha\}} $ and $\wG_{i+1,\{1 - \alpha\}}= \wF_{i+1,\{1-\alpha\}}$ if $s(i)=j$. \\
By duality, if $j > s(i)$, then $\G_{i,\{1-\alpha\}} :=\wG_{i,\{1 - \alpha\}} + \F_{i,\{1 - \alpha\}}$ is a free $O_{K,\{1-\alpha\}}$-module of rank $h - \delta_j + d_i$, and $F_{i+1} (\G_{i,\{1-\alpha\}})^{(p)} = \wG_{i+1,\{1 - \alpha\}}$. \\
Assume now that $j \leq s(i)$. Then the map
$$V_{i+1} : \E_{i+1,\{1 - \alpha\}} / \wG_{i+1,\{1 - \alpha\}} \to (\E_{i,\{1 - \alpha\}} / \wG_{i,\{1 - \alpha\}})^{(p)}$$
can thus be identified with the map
$$V_{i+1} : \E_{C,i+1,\{1 - \alpha\}} \to (\E_{C,i,\{1 - \alpha\}})^{(p)}$$
Moreover, the determinant of this map has a valuation equal to $p \deg_i C^D \leq p \alpha$. This can be proved by filtering the group $C$ by Raynaud group schemes and using Proposition \ref{crys}. But this map can also be identified with the natural map
$$(\F_{i,\{1 - \alpha\}} / \G_{i,\{1-\alpha\}})^{(p)} \to (\E_{i,\{1 - \alpha\}} / \wG_{i,\{1 - \alpha\}})^{(p)}$$
Since $p \alpha < 1 - \alpha$, one deduces that the determinant of the natural map
$$\F_{i,\{1 - \alpha\}} / \G_{i,\{1-\alpha\}} \to \E_{i,\{1 - \alpha\}} / \wG_{i,\{1 - \alpha\}}$$
has a determinant of valuation $\deg_i C^D$. \\
By duality, if $j > s(i)$, the determinant of the map
$$F_{i+1} : (\wG_{i,\{1 - \alpha\}})^{(p)} \to \wG_{i+1,\{1 - \alpha\}}$$
has a valuation equal to $p \deg_i (C^\bot)^D = p (d_i - \delta_j + \deg_i C^D) \leq p \alpha$. This map can be identified with the natural map
$$(\wG_{i,\{1 - \alpha\}})^{(p)} \to (\G_{i,\{1-\alpha\}} / \F_{i,\{1-\alpha\}})^{(p)}$$
Since $p \alpha < 1 - \alpha$, the determinant of the natural map
$$\wG_{i,\{1 - \alpha\}} \to \G_{i,\{1 - \alpha\}} / \F_{i,\{1 - \alpha\}}$$
has valuation equal to $d_i - \delta_j + \deg_i C^D$. \\
We would be able to conclude if the filtrations $(\G_{i,\{1-\alpha\}})_{1 \leq i \leq f}$ and $(\F_{i,\{1-\alpha\}}^{[j]})_{1 \leq i \leq f}$ were equal. In the general case, using the same proof as in Proposition \ref{unique}, one proves that
$$\F_{i,\{1 - 2\alpha\}}^{[j]} = \G_{i,\{1-2\alpha\}}$$
for all $1 \leq i \leq f$. Let $i$ be an integer with $s(i) \leq j$. The map $H_i^{[j]}$ agrees with the map
$$ \phi_i :  \F_{i,\{1-\alpha\}} / \G_{i,\{1-\alpha\}} \to \E_{i,\{1 - \alpha\}} / \wG_{i,\{1-\alpha\}}$$
after reduction modulo $\mathfrak{m}_{1 - 2 \alpha}$. But we have seen that  the valuation of the determinant of $\phi_i$ is $\deg_i C^D$. The reduction modulo $\mathfrak{m}_{1 - 2 \alpha}$ of $H_i^{[j]}$ has thus a determinant of valuation $\deg_i C^D$. Since $\deg_i C^D \leq \alpha < 1 - 2 \alpha$, one gets
$$\deg_i C^D = w_i^{[j]}$$
The case with $s(i) > j$ is similar or can be obtained by duality.
\end{proof}

\noindent Note that the relation in Theorem \ref{theo_he} due to Hernandez then follows from this last result together with Theorem \ref{link}.

\begin{coro}
Assume the existence of a strong canonical filtration
$$0 \subset C_1 \subset \dots \subset C_r \subset G[p]$$
Then for all $1 \leq i \leq f$ and $1 \leq j \leq r$, we have
$$\deg_i C_j^D = \max(\delta_j-d_i,0) + w_i^{[j]}$$
In particular, the elements $(w_i^{[j]})_{1 \leq i \leq f,1 \leq j \leq r}$ are well defined.
\end{coro}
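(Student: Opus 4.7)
The corollary is essentially a direct consequence of the preceding theorem applied to each member of the canonical filtration, together with the uniqueness statement for canonical subgroups established in the proposition of Section \ref{defcanfil}.

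First I would fix $j$ with $1 \leq j \leq r$. By hypothesis, $C_j$ is a strong canonical subgroup of height $f\delta_j$, meaning it satisfies the bound
$$\deg C_j^D < \sum_{i=1}^f \max(\delta_j - d_i, 0) + \frac{1}{p+1}$$
from Definition \ref{defi_can}. The main theorem of the section then applies verbatim to $C = C_j$, yielding
$$\deg_i C_j^D = \max(\delta_j - d_i, 0) + w_i^{[j]}$$
for every $1 \leq i \leq f$. Letting $j$ vary over $\{1,\dots,r\}$ gives all the claimed equalities.

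For the "in particular" statement, I would argue as follows. The proposition in Section \ref{defcanfil} asserts that, for each $1 \leq j \leq r$, there is at most one canonical subgroup of $G[p]$ of height $f\delta_j$. Consequently each $C_j$ is uniquely determined by $G$, and so are its partial degrees $\deg_i C_j^D$. Rewriting the formula as
$$w_i^{[j]} = \deg_i C_j^D - \max(\delta_j - d_i, 0),$$
we see that the right-hand side is intrinsic to $G$, hence so is $w_i^{[j]}$. In particular, this value is independent of any choice of adequate filtration made in the construction of $h_i^{[j]}$.

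The only real work lies in the main theorem itself, which has already been proven; there is no genuine obstacle in the corollary beyond invoking that theorem $r$ times and quoting the uniqueness of canonical subgroups. I would present the proof as a two-line deduction, noting explicitly that the hypothesis of being a \emph{strong} canonical filtration is what allows the theorem to be applied to each $C_j$ separately.
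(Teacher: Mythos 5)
Your proof is correct and matches the paper's intent: the corollary carries no separate proof in the paper precisely because it is the theorem of Section \ref{canfil} applied to each $C_j$, with well-definedness of the $w_i^{[j]}$ following from the uniqueness of canonical subgroups established in Section \ref{defcanfil}. Nothing is missing.
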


\begin{rema}
There is no assumption on $p$ in this corollary (and in our definition of the canonical subgroups), unlike the result of Hernandez (Theorem $\ref{theo_he}$).
\end{rema}

\begin{rema}
The partial degrees of the graded parts $C_k / C_{k-1}$ are related to the valuations of the invariants $m_i^{[k]}$ and $n_i^{[k]}$. More precisely, one has
$$ \deg_i (C_k / C_{k-1})^D = v(m_i^{[k]})      $$
for $1 \leq i \leq f$ and $1 \leq k \leq s(i)$ (with $C_0 := 0$), and
$$ \deg_i (C_{k+1} / C_{k}) = v(n_i^{[k]})      $$
for $1 \leq i \leq f$ and $s(i) \leq k \leq r$ (with $C_{r+1} := G[p]$).
\end{rema}

\bibliographystyle{amsalpha}

\end{document}